\newcommand{\beq}{\begin{equation}}
\newcommand{\eeq}{\end{equation}}
\newcommand{\ben}{\begin{eqnarray}}
\newcommand{\een}{\end{eqnarray}}
\newcommand{\beno}{\begin{eqnarray*}}
\newcommand{\eeno}{\end{eqnarray*}}
\theoremstyle{plain}
\newtheorem{theorem}{Theorem}[section]
\newtheorem{corollary}[theorem]{Corollary}
\newtheorem{proposition}[theorem]{Proposition}
\newtheorem{lemma}[theorem]{Lemma}
\newtheorem{remark}[theorem]{Remark}
\numberwithin{theorem}{section} \numberwithin{equation}{section}
\renewcommand{\theequation}{\thesection.\arabic{equation}}
\newcommand{\average}{{\mathchoice {\kern1ex\vcenter{\hrule height.4pt
width 6pt depth0pt} \kern-9.7pt} {\kern1ex\vcenter{\hrule height.4pt
width 4.3pt depth0pt} \kern-7pt} {} {} }}
\def\R{\mathbb{R}}
\newcommand{\del}{\partial }
\newcommand{\sdiamond}{\text{\tiny $\diamond$}}
\renewcommand{\phi}{\varphi}
\newcommand{\ov}{\overline}
\newcommand{\be}{\begin{equation}}
\newcommand{\ee}{\end{equation}}
\DeclareMathOperator{\sgn}{sgn}
\newcommand{\N}{\mathbb{N}}
\newcommand{\cM}{{\mathcal M}}
\newcommand{\cU}{{\mathcal U}}
\newcommand{\B}{{\bf B}}
\renewcommand{\dim}{{\rm dim}\,}
\newcommand{\eps}{\varepsilon}
\renewcommand{\theequation}{\thesection.\arabic{equation}}
\renewcommand{\epsilon}{\varepsilon}
\begin{document}
\title{Spectral asymptotics of radial solutions and nonradial bifurcation for the H{\'e}non equation}
\author{Joel K\"ubler\thanks{Institut f\"ur Mathematik, Goethe-Universit\"at Frankfurt, Robert-Mayer-Str. 10,
		D-60629 Frankfurt a.M., kuebler@math.uni-frankfurt.de.}
\and 
Tobias Weth\thanks{Institut f\"ur Mathematik, Goethe-Universit\"at Frankfurt, Robert-Mayer-Str. 10,
		D-60629 Frankfurt a.M., weth@math.uni-frankfurt.de.}}
\maketitle
\begin{abstract}
	We study the spectral asymptotics of nodal (i.e., sign-changing) solutions of the problem
	\begin{equation*}
(H) \qquad \qquad		\left \{
		\begin{aligned}
			-\Delta u &=|x|^\alpha |u|^{p-2}u&&\qquad \text{in $\B$,}\\
			u&=0&&\qquad \text{on $\partial \B$,}
		\end{aligned}
		\right.
	\end{equation*}
	in the unit ball $\B \subset \R^N,N\geq 3$, $p>2$ in the limit $\alpha \to +\infty$. More precisely, for a given positive integer $K$, we derive asymptotic $C^1$-expansions for the negative eigenvalues of the linearization of the unique radial solution $u_\alpha$ of $(H)$ with precisely $K$ nodal domains and $u_\alpha(0)>0$. As an application, we derive the existence of an unbounded sequence of bifurcation points on the radial solution branch $\alpha \mapsto (\alpha,u_\alpha)$ which all give rise to bifurcation of nonradial solutions whose nodal sets remain homeomorphic to a disjoint union of concentric spheres. 
\end{abstract}

\vskip 0.2cm


\renewcommand{\theequation}{\thesection.\arabic{equation}}
\setcounter{equation}{0}
\section{Introduction}
We consider the Dirichlet problem for the generalized H\'{e}non equation
\begin{equation}\label{1.4}
\left \{
 \begin{aligned}
-\Delta u &=|x|^\alpha |u|^{p-2}u&&\qquad \text{in $\B$,}\\
u&=0&&\qquad \text{on $\partial \B$,}
\end{aligned}
\right.
\end{equation}
where $\B \subset \R^N,N\geq 3$ is the unit ball and $p>2$, $\alpha>0$.
This equation originally arose through the study of stellar clusters in \cite{Henon}. 
One of the first results on (\ref{1.4}) is due to Ni \cite{Ni}, who proved the existence of a positive radial solution in the subcritical range of exponents $2<p<2_\alpha^*$, where $2_\alpha^*:= \frac{2N+2\alpha}{N-2}$. In another seminal paper, Smets, Willem and Su  \cite{Smets-Willem-Su} observed that symmetry breaking occurs for fixed $p$ and large $\alpha$, i.e., there exists $\alpha^*>0$ depending on $p$ such that ground state solutions of \eqref{1.4} are nonradial for $\alpha>\alpha^*$. In the sequel, the existence and shape of radial and nonradial solutions of the H\'enon equation has received extensive attention, see e.g. \cite{Smets-Willem, Cao-Peng, Serra, Pistoia-Serra, B-W,B-W2, Amadori-Gladiali,Amadori-Gladiali2,Amadori-Gladiali3,lou-weth-zhang:2018}.
In particular, bifurcation of nonradial positive solutions in the parameter $p$ is studied in \cite{Amadori-Gladiali} for fixed $\alpha>0$. Moreover, a related critical parameter-dependent equation on $\R^N$ is considered in \cite{Gladiali-Grossi-Neves}. 

The main motivation for the present paper is the investigation of bifurcation of nonradial nodal (i.e., sign changing) solutions -- in the parameter $\alpha>0$ -- from the set of radial nodal solutions. To explain this in more detail, let us fix $K \in \N$, an exponent $p>2$ and consider 
$$
\alpha > \alpha_p:= \max \left\{\frac{(N-2)p - 2N}{2},0 \right\}, 
$$ 
which amounts to the subcriticality condition $p<2_\alpha^*$. Under these assumptions, it has been proved by Nagasaki \cite{nagasaki} that (\ref{1.4}) admits a unique classical radial solution $u_\alpha \in C^2(\overline \B)$ with $u_\alpha(0)>0$ and with precisely $K$ nodal domains (i.e., $K-1$ zeros in the radial variable $r= |x| \in (0,1)$). In order to decide whether the branch $\alpha \to u_\alpha$ admits bifurcation of nonradial solutions for large $\alpha$, we need to analyze its spectral asymptotics as $\alpha \to \infty$. More precisely, we wish to derive asymptotic expansions of the eigenvalues of the linearizations of (\ref{1.4}) at $u_\alpha$ as $\alpha \to \infty$. For this we consider the linearized operators 
\begin{equation} \label{linearized operator}
\phi \mapsto L^\alpha \phi:= -\Delta \phi - (p-1) |x|^\alpha |u_\alpha|^{p-2} \phi, \qquad \alpha> \alpha_p,
\end{equation}
which are self-adjoint operators in $L^2(\B)$ with compact resolvent, domain 
$H^2(\B) \cap H^1_0(\B)$ and form domain $H^1_0(\B)$. In particular, they are Fredholm operators of index zero.

As usual, $u_\alpha$ is called nondegenerate if $L^\alpha: H^2(\B) \cap H^1_0(\B) \to L^2(\B)$ is an isomorphism, which amounts to the property that the equation $L^\alpha \phi = 0$ only has the trivial solution $\phi=0$ in $H^2(\B) \cap H^1_0(\B)$. Otherwise, $u_\alpha$ is called degenerate. By a classical observation, only values $\alpha$ such that $u_\alpha$ is degenerate can give rise to bifurcation from 
the branch $\alpha \mapsto u_\alpha$. Moreover, properties of the kernel of $L^\alpha$ and the change of the Morse index are of key importance to establish bifurcation. Here we recall that the Morse index of $u_\alpha$ is defined as the number of negative eigenvalues of the operator $L^\alpha$. 

The first step in deriving asymptotic spectral information of the operator family $L^\alpha$, $\alpha>\alpha_p$ is to characterize the limit shape of the solutions $u_\alpha$ after suitable transformations. Inspired by Byeon and Wang \cite{B-W}, we transform the radial variable and derive a corresponding limit problem. Here, for simplicity, we also regard $u_\alpha = u_\alpha(r)$ as a function of the radial variable $r=|x| \in [0,1]$. Our first preliminary result is the following. 

\begin{proposition}
\label{limit-shape} Let $p>2$, $K \in \N$. Moreover, for $\alpha>\alpha_p$, let $u_\alpha$ denote the unique radial solution of (\ref{1.4}) with $K$ nodal domains and $u_\alpha(0)>0$, and define
  \begin{equation}
    \label{eq:U-alpha-definition}
U_\alpha: [0,\infty) \to \R, \qquad U_\alpha(t)= (N+\alpha)^{-\frac{2}{p-2}}\:
u_\alpha(e^{-\frac{t}{N+\alpha}}).
  \end{equation}
Then $U_\alpha \to (-1)^{K-1} U_\infty$ uniformly on $[0,\infty)$ as $\alpha \to \infty$, where $U_\infty \in C^2([0,\infty))$  is characterized as the unique bounded solution of the limit problem 
 \begin{equation}
  \label{eq:limit-U}
-U'' = e^{-t}|U|^{p-2}U \quad \text{in $[0,\infty)$,}\qquad U(0)=0  
\end{equation}
with $U'(0)>0$ and with precisely $K-1$ zeros in $(0,\infty)$. 
\end{proposition}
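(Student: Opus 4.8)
The plan is to strip off the H\'enon weight $r^{\alpha}$ by two successive changes of variables, reducing the equation along the radial branch to a radial Lane--Emden problem whose effective ``dimension'' tends to $2$ as $\alpha\to\infty$, and then to read off the limit shape and its uniqueness from the classical shooting theory for radial Lane--Emden equations on a ball. Write $u_\alpha=u_\alpha(r)$, so the radial ODE is $-u_\alpha''-\frac{N-1}{r}u_\alpha'=r^\alpha|u_\alpha|^{p-2}u_\alpha$ on $(0,1)$ with $u_\alpha'(0)=0$, $u_\alpha(1)=0$. Performing the substitution $r=e^{-t/(N+\alpha)}$ of \eqref{eq:U-alpha-definition}, and using that $u_\alpha(0)>0$ together with $K-1$ interior zeros forces $u_\alpha$ to have sign $(-1)^{K-1}$ near $r=1$, a direct computation shows that $W_\alpha:=(-1)^{K-1}U_\alpha$ (which solves the same equation as $U_\alpha$, the nonlinearity being odd) satisfies
\[
-W_\alpha''+\frac{N-2}{N+\alpha}\,W_\alpha'=e^{-\frac{(\alpha+2)t}{N+\alpha}}\,|W_\alpha|^{p-2}W_\alpha\ \ \text{on }[0,\infty),\qquad W_\alpha(0)=0,
\]
with $W_\alpha'(0)>0$, exactly $K-1$ zeros in $(0,\infty)$, and $W_\alpha$ bounded (since $u_\alpha$ is). Sending $\alpha\to\infty$ formally gives the limit equation \eqref{eq:limit-U}; the work is to make this rigorous, global, and uniform, and then to pin down the limit.

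\emph{Second change of variables.} With $\nu_\alpha:=\frac{\alpha+2}{2(N+\alpha)}$, $R_\alpha:=1/\nu_\alpha=\frac{2(N+\alpha)}{\alpha+2}$, the further substitution $\tau:=R_\alpha e^{-\nu_\alpha t}$ (equivalently $\tau=R_\alpha r^{(\alpha+2)/2}$) and $w_\alpha(\tau):=W_\alpha(t(\tau))$ turn the $t$-equation into the radial Lane--Emden equation
\[
-\Big(w_\alpha''+\tfrac{n_\alpha-1}{\tau}w_\alpha'\Big)=|w_\alpha|^{p-2}w_\alpha\ \ \text{on }(0,R_\alpha),\qquad n_\alpha:=2+\tfrac{2(N-2)}{\alpha+2},
\]
with $w_\alpha(R_\alpha)=0$, $w_\alpha$ regular at $0$ (integrating $(\tau^{n_\alpha-1}w_\alpha')'=-\tau^{n_\alpha-1}|w_\alpha|^{p-2}w_\alpha$ from $0$ and using boundedness of $w_\alpha$ and $n_\alpha>2$ eliminates the singular term, so $w_\alpha'(0)=0$), $w_\alpha(0)$ of sign $(-1)^{K-1}$, $w_\alpha'(R_\alpha)<0$, and exactly $K-1$ interior zeros. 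The standing hypothesis $\alpha>\alpha_p$ is precisely $p<\frac{2n_\alpha}{n_\alpha-2}$, i.e. $p$ is subcritical for the dimension $n_\alpha$; hence, by the scaling covariance $w\mapsto\mu^{2/(p-2)}w(\mu\,\cdot\,)$ and the usual shooting argument, $w_\alpha(\tau)=(-1)^{K-1}d_\alpha\,\phi^{(n_\alpha)}\big(d_\alpha^{(p-2)/2}\tau\big)$, where $\phi^{(n)}$ solves $-(\phi''+\frac{n-1}{\tau}\phi')=|\phi|^{p-2}\phi$, $\phi(0)=1$, $\phi'(0)=0$, and $d_\alpha=\big(\tau_K^{(n_\alpha)}/R_\alpha\big)^{2/(p-2)}$ with $\tau_K^{(n)}$ the $K$-th zero of $\phi^{(n)}$ (this recovers Nagasaki's uniqueness \cite{nagasaki} as a by-product). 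As $\alpha\to\infty$ we have $n_\alpha\to2$, $R_\alpha\to2$, and since $\phi^{(n)}$ depends continuously on $n$ in $C^2_{\mathrm{loc}}([0,\infty))$ (continuous dependence of the initial value problem, valid up to the regular point $\tau=0$), it follows that $\tau_K^{(n_\alpha)}\to\tau_K^{(2)}\in(0,\infty)$, hence $d_\alpha\to d_\infty:=(\tau_K^{(2)}/2)^{2/(p-2)}$ and $w_\alpha\to w_\infty$ in $C^2_{\mathrm{loc}}([0,\infty))$, where $w_\infty:=(-1)^{K-1}d_\infty\,\phi^{(2)}(d_\infty^{(p-2)/2}\,\cdot\,)$ is the radial solution on $B_2\subset\R^2$ of $-\Delta w=|w|^{p-2}w$ with $K$ nodal domains and $w_\infty'(2)<0$.

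\emph{Conclusion and uniqueness.} To upgrade $C^2_{\mathrm{loc}}$ to uniform convergence of $U_\alpha$ on the whole half-line, I would use the uniform bounds $\|w_\alpha\|_{L^\infty}=|w_\alpha(0)|\le C$ (amplitude monotonicity of radial Lane--Emden via the nonincreasing energy $\tfrac12(w_\alpha')^2+\tfrac1p|w_\alpha|^p$) and $\|w_\alpha'\|_{L^\infty}\le\frac{R_\alpha}{n_\alpha}\|w_\alpha\|_{L^\infty}^{p-1}\le C$ (from the integral form of the equation); together with $|w_\alpha(0)|\to|w_\infty(0)|$ these control $w_\alpha$ uniformly near both endpoints $\tau=0$ and $\tau=R_\alpha$ and give $\sup_{t\ge0}|W_\alpha(t)-U_\infty(t)|\to0$, where $U_\infty(t):=w_\infty(2e^{-t/2})$. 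One checks directly that $U_\infty$ solves \eqref{eq:limit-U}, is bounded, has $U_\infty'(0)=-w_\infty'(2)>0$ and exactly $K-1$ zeros, and that $U_\alpha=(-1)^{K-1}W_\alpha\to(-1)^{K-1}U_\infty$ uniformly. For the uniqueness claim I would observe that $\tau=2e^{-t/2}$ is a bijection between bounded solutions of \eqref{eq:limit-U} and regular radial solutions of $-\Delta w=|w|^{p-2}w$ on $B_2$ vanishing on $\partial B_2$, under which ``$U'(0)>0$ with $K-1$ interior zeros'' corresponds to ``$K$ nodal domains with $w'(2)<0$'', a configuration fixed uniquely, once more, by the scaling/shooting argument.

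\emph{Main obstacle.} The two substitutions and their identities are routine bookkeeping; the genuine content is the passage to the limit, specifically the claim that the $K$-th zero $\tau_K^{(n)}$ of the normalized radial Lane--Emden solution is finite, stays bounded away from $0$, and depends continuously on the dimension parameter $n$ as it crosses the value $2$ from above --- equivalently, that in the limit no nodal ``domain'' escapes to infinity and none is created. This rests on the subcriticality of $p$ (which holds precisely because $\alpha>\alpha_p$), on the simplicity of the zeros (from uniqueness for the ODE, whence continuity in $n$ by the implicit function theorem), and on the elementary oscillation fact that $\phi^{(2)}$ cannot have an eventual constant sign, so that $\tau_K^{(2)}$ is indeed well defined. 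A secondary, but real, technical point is the uniform control of $w_\alpha$ in the boundary layer near $\tau=R_\alpha$ needed to pass from local to global uniform convergence.
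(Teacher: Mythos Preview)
Your approach is correct and takes a genuinely different route from the paper's. You perform a second change of variables $\tau = R_\alpha e^{-\nu_\alpha t}$ to convert the transformed equation into a radial Lane--Emden problem in varying effective dimension $n_\alpha = 2 + \frac{2(N-2)}{\alpha+2}$, and then read off the limit from continuous dependence of the normalised shooting solution $\phi^{(n)}$ and its zeros as $n_\alpha \downarrow 2$. This is essentially the transformation the paper mentions in the introduction around~\eqref{gladiali-transform} as the one used in~\cite{Amadori-Gladiali2}, and the ingredients you flag all go through: oscillation of $\phi^{(2)}$ is elementary (a positive tail forces $\tau\phi'\to-\infty$), zeros are simple so they vary continuously in $n$, and the inverse substitution $\tau=2e^{-t/2}$ gives a clean bijection with the two-dimensional problem on $B_2$ that yields both the limit and its uniqueness. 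The paper instead works directly with the family $-(e^{-\gamma t}U')'=e^{-t}|U|^{p-2}U$, $\gamma=\frac{N-2}{N+\alpha}$, realises it as a $C^1$-map between \emph{fixed}, $\gamma$-independent Banach spaces of exponentially decaying functions, proves nondegeneracy of the limit solution $U_0$, and applies the implicit function theorem at $\gamma=0$. This is heavier, but it buys more than Proposition~\ref{limit-shape} asks: $C^1$-dependence of $\gamma\mapsto U_\gamma$ through $\gamma=0$, which is exactly what is needed later for the derivative expansion $\mu_i'(\alpha)=2\nu_i^*\alpha+c_i^*+o(1)$ in Theorem~\ref{spectral-curves}. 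Your argument delivers the proposition as stated, arguably more transparently, but would need separate work to recover the parameter-smoothness used downstream; the paper says explicitly that it chose its framework for this reason.
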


The asymptotic description derived in Proposition~\ref{limit-shape} implies that the solutions $u_\alpha$ blow up everywhere in $\B$ as $\alpha \to \infty$, in contrast to the nonradial ground states considered in \cite{Smets-Willem-Su}. It is therefore reasonable to expect that the Morse index of $u_\alpha$ tends to infinity as $\alpha \to \infty$. This fact has been proved recently and independently for more general classes of problems in \cite{Amadori-Gladiali2,lou-weth-zhang:2018}, extending a result for the case $N=2$ given in \cite{Moreira-dos-Santos-Pacella}. To obtain a more precise description of the distribution of eigenvalues of $L^\alpha$ as $\alpha \to \infty$, we rely on complementary approaches of \cite{Amadori-Gladiali2,lou-weth-zhang:2018} and implement new tools. We note here that \cite{lou-weth-zhang:2018} uses the transformation~(\ref{eq:U-alpha-definition}) in a more general context together with Liouville type theorems for limiting problems on the half line. In the present paper, we build on very useful results obtained recently by Amadori and Gladiali in \cite{Amadori-Gladiali2}. In particular, we use the fact that the Morse index of $u_\alpha$ equals the number of negative eigenvalues (counted with multiplicity) of the weighted eigenvalue problem 
\begin{equation}
  \label{eq:weighted-L-alpha-problem}
L^\alpha \phi =\frac{\lambda}{|x|^2}\phi,\qquad \phi \in H^1_0(\B), 
\end{equation}
see \cite[Prop. 5.1]{Amadori-Gladiali2}. In various special cases, this observation had already been used before, see e.g. \cite[Section 5]{Dancer-Gladiali-Grossi}. In order to avoid regularity issues related to the singularity of the weight $\frac{1}{|x|^2}$, it is convenient to consider (\ref{eq:weighted-L-alpha-problem}) in weak sense via the quadratic form $q_\alpha$ associated with $L^\alpha$, see Section~\ref{sec:spectr-asympt-refeq} below.  The problem (\ref{eq:weighted-L-alpha-problem}) is easier to analyze than the standard eigenvalue problem 
$L^\alpha \phi =\lambda \phi$ without weight. Indeed, every eigenfunction of (\ref{eq:weighted-L-alpha-problem}) is a sum of functions of the form 
\begin{equation}
  \label{eq:eigenfunction-form}
x \mapsto \phi(x) = \psi(x)Y_{\ell}\left(\frac{x}{|x|}\right), 
\end{equation}
where $\psi \in H^1_{0,rad}(\B)$ and $Y_\ell$ is a spherical harmonic of degree $\ell$, see \cite[Prop. 4.1]{Amadori-Gladiali2}. Here $H^1_{0,rad}(\B)$ denotes the space of radial functions in $H^1_0(\B)$. We recall that the space of spherical harmonics of degree $\ell \in \N \cup \{0\}$ has dimension 
$d_\ell := {N+ \ell -1 \choose N-1} - {N+ \ell -3 \choose N-1}$, and that 
every such spherical harmonic is an eigenfunction of the Laplace-Beltrami operator on the
unit sphere $\mathbb{S}^{N-1}$ corresponding to the eigenvalue $\lambda_\ell:= \ell (\ell + N-2)$. For functions $\phi$ of the form (\ref{eq:eigenfunction-form}), the eigenvalue problem (\ref{eq:weighted-L-alpha-problem}) reduces to an eigenvalue problem for radial functions given by 
\begin{equation}
\label{eq:weighted-eigenvalue-problem-reduced}
L^\alpha \psi =\frac{\mu}{|x|^2}\psi,\qquad \psi \in H^1_{0,rad}(\B),
\end{equation}
where $\mu = \lambda - \lambda_{\ell}$. In \cite[p.19 and Prop. 3.7]{Amadori-Gladiali2}, it has been proved that (\ref{eq:weighted-eigenvalue-problem-reduced}) admits precisely $K$ negative eigenvalues 
\begin{equation}
\label{eigenvalue-curves-first}
\mu_1(\alpha) <\mu_2(\alpha)<\dots< \mu_K(\alpha)<0 \qquad \qquad \text{for $\alpha> \alpha_p$.}
\end{equation}
Combining this fact with the observations summarized above, one may then derive the following facts which we cite here in a slightly modified form from \cite{Amadori-Gladiali2}.

\begin{proposition} (see \cite[Prop. 1.3 and 1.4]{Amadori-Gladiali2})\\
\label{spectral-curves-0}
Let $p > 2$ and $\alpha >\alpha_p$. Then the Morse index of $u_\alpha$ is given by 
$$
m(u_\alpha)= 
\sum \limits_{(i,\ell) \in E^-} d_\ell,
$$ 
where $E^-$ denotes the set of pairs $(i,\ell)$ with $i \in \N,\: \ell \in \N \cup \{0\}$ and $\mu_{i}(\alpha)+ \lambda_\ell <0$. Moreover, $u_\alpha$ is nondegenerate if and only if 
$$
\mu_i(\alpha) + \lambda_\ell \not = 0\qquad \text{for every $i \in \{1,\dots,K\}$, $\ell \in \N \cup \{0\}$.}
$$
\end{proposition}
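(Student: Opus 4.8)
The plan is to derive Proposition~\ref{spectral-curves-0} by combining the three facts recalled above: the identification (\cite[Prop.~5.1]{Amadori-Gladiali2}) of $m(u_\alpha)$ with the number of negative eigenvalues of the weighted problem \eqref{eq:weighted-L-alpha-problem}, the separation of variables \eqref{eq:eigenfunction-form} for that problem (\cite[Prop.~4.1]{Amadori-Gladiali2}), and the count \eqref{eigenvalue-curves-first} of the negative eigenvalues of the reduced radial problem \eqref{eq:weighted-eigenvalue-problem-reduced}. First I would make the reduction over spherical harmonics completely explicit. Since $L^\alpha$ commutes with rotations, it respects the orthogonal decomposition of $L^2(\B)$ into the subspaces spanned by functions $x \mapsto \psi(|x|)\,Y_\ell(x/|x|)$, with $Y_\ell$ a spherical harmonic of degree $\ell$, and the same holds for \eqref{eq:weighted-L-alpha-problem}. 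Using $-\Delta(\psi Y_\ell) = \bigl(-\psi'' - \tfrac{N-1}{r}\psi' + \tfrac{\lambda_\ell}{r^2}\psi\bigr)Y_\ell$ with $r=|x|$, one obtains $L^\alpha(\psi Y_\ell) = \bigl(L^\alpha\psi + \tfrac{\lambda_\ell}{|x|^2}\psi\bigr)Y_\ell$, where $L^\alpha$ on the right acts on the radial function $\psi$. Hence $\lambda$ is an eigenvalue of \eqref{eq:weighted-L-alpha-problem} in the degree-$\ell$ sector precisely when $\mu := \lambda - \lambda_\ell$ is an eigenvalue of \eqref{eq:weighted-eigenvalue-problem-reduced}, with its multiplicity multiplied by $d_\ell$; because of the singular weight, all of this is read in the weak sense through the form $q_\alpha$ and the Hardy inequality, in the framework of Section~\ref{sec:spectr-asympt-refeq}.

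Granting this, both assertions are a matter of bookkeeping. For the Morse index, \cite[Prop.~5.1]{Amadori-Gladiali2} gives that $m(u_\alpha)$ equals the number of negative eigenvalues of \eqref{eq:weighted-L-alpha-problem}, counted with multiplicity; by the previous paragraph together with \eqref{eigenvalue-curves-first} these are exactly the values $\mu_i(\alpha)+\lambda_\ell$ that are negative, each contributing multiplicity $d_\ell$, and since $\lambda_\ell \ge 0$ and $\mu_i(\alpha) \ge 0$ for $i>K$ any such pair has $i \in \{1,\dots,K\}$; summing $d_\ell$ over $(i,\ell)\in E^-$ yields the stated formula. For nondegeneracy, a function $\phi \in H^2(\B)\cap H^1_0(\B)$ lies in $\ker L^\alpha$ if and only if every spherical-harmonic component $\psi_{\ell,m}(|x|)Y_{\ell,m}(x/|x|)$ of $\phi$ does, and by the identity above this means that $-\lambda_\ell$ is an eigenvalue of \eqref{eq:weighted-eigenvalue-problem-reduced} for the corresponding $\ell$. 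Since $-\lambda_\ell \le 0$ and the only non-positive eigenvalues of \eqref{eq:weighted-eigenvalue-problem-reduced} are $\mu_1(\alpha)<\dots<\mu_K(\alpha)<0$, this occurs if and only if $\mu_i(\alpha)+\lambda_\ell = 0$ for some $i \in \{1,\dots,K\}$ and some $\ell \in \N \cup \{0\}$ (for $\ell=0$ the condition $\mu_i(\alpha)+\lambda_0\neq0$ is automatic, so including $\ell=0$ in the statement is harmless), and conversely any such coincidence produces a nonzero element of $\ker L^\alpha$. This is exactly the nondegeneracy criterion.

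The only substantive ingredient, which I would invoke rather than reprove, is \cite[Prop.~5.1]{Amadori-Gladiali2}: it equates the Morse index of $u_\alpha$, defined through the unweighted pairing $\int_\B \phi^2\,dx$, with a spectral count for the singular weight $|x|^{-2}$, and establishing this equality is the heart of the matter; everything else above follows by separation of variables and counting. A secondary point that must be invoked is that zero is not an eigenvalue of the radial problem \eqref{eq:weighted-eigenvalue-problem-reduced} --- equivalently, that $u_\alpha$ is nondegenerate within the class of radial functions --- so that $\mu_{K+1}(\alpha)>0$ and the only non-positive radial eigenvalues are the $\mu_i(\alpha)$, $i\le K$; this is the Sturm--Liouville information contained in \cite[p.~19 and Prop.~3.7]{Amadori-Gladiali2}.
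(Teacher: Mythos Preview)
Your proposal is correct and follows essentially the same route as the paper: in Section~\ref{sec:spectr-asympt-refeq} the authors simply record the ingredients from \cite{Amadori-Gladiali2} as Lemmas~\ref{sec:spectral-asymptotics-morse-index} and~\ref{eigenvalue lemma} and then remark that Proposition~\ref{spectral-curves-0} ``merely follows by combining'' them. You have just written out explicitly what that combination amounts to --- the spherical-harmonic decomposition, the shift $\lambda \leftrightarrow \mu+\lambda_\ell$, and the bookkeeping of multiplicities --- together with the observation that zero is not a radial eigenvalue, which is exactly the content of Lemma~\ref{eigenvalue lemma}.
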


In order to describe the asymptotic distribution of negative eigenvalues of $L^\alpha$, it is essential to study the asymptotics of the eigenvalues $\alpha \mapsto \mu_i(\alpha)$, $i=1,\dots,K$. With regard to this aspect, we mention the estimate 
\begin{equation}
\label{gladiali-mu-i-est}
\mu_i(\alpha) < - \frac{(\alpha+2)\bigl(\alpha+ 2(N-1)\bigr)}{4} \qquad \text{for $\alpha > \alpha_p$, $i = 1,\dots,K-1$,}
\end{equation}
which has been derived in \cite[Lemma 5.11 and Remark 5.12]{Amadori-Gladiali2}. In particular, it follows that $\mu_i(\alpha) \to -\infty$ as $\alpha \to \infty$ for $i = 1,\dots,K-1$. In our first main result, we complement this estimate by deriving asymptotics for $\mu_i(\alpha)$. 

\begin{theorem}
\label{spectral-curves}
Let $p > 2$ and $\alpha >\alpha_p$. Then the negative eigenvalues of (\ref{eq:weighted-eigenvalue-problem-reduced}) are given as $C^1$-functions 
$(\alpha_p,\infty) \to \R$, $\alpha \mapsto \mu_{i}(\alpha)$, $i = 1,\ldots,K$ 
satisfying the asymptotic expansions
\begin{equation} \label{expansions}
\mu_i(\alpha) =  \nu^*_i \alpha^2 +  c^*_i \alpha  +o(\alpha) \quad \text{and}\quad 
\mu_i'(\alpha) = 2 \nu^*_i \alpha  + c^*_i +o(1) \qquad \text{as $\alpha \to \infty$,}
\end{equation}
where $c^*_i$, $i=1,\dots,K$ are constants and the values $\nu^*_1 < \nu^*_2 < \dots< \nu^*_K < 0$ are precisely the negative eigenvalues of the eigenvalue problem   
\begin{equation}
  \label{eq:weighted-eigenvalue-translimit-preliminaries}
\left\{
  \begin{aligned}
&-\Psi'' - (p-1)e^{-t}|U_\infty(t)|^{p-2}\Psi= 
\nu \Psi \quad \text{in $[0,\infty)$,}\\
&\qquad \Psi(0)=0,\quad \Psi \in L^\infty(0,\infty),
\end{aligned}
\right.
\end{equation}
with $U_\infty$ given in Proposition~\ref{limit-shape}. In particular, there exists $\alpha^*>0$ such that the curves $\mu_i$, $i=1,\dots,K$ are strictly decreasing on $[\alpha^*, \infty)$.
\end{theorem}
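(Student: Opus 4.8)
The plan is to reduce the weighted radial eigenvalue problem~\eqref{eq:weighted-eigenvalue-problem-reduced} to a one-dimensional problem on the half-line by means of the substitution $r = e^{-t/(N+\alpha)}$ underlying Proposition~\ref{limit-shape}, and then to analyze the resulting operator family as a perturbation, in the parameter $\varepsilon := (N+\alpha)^{-1}$, of the limiting eigenvalue problem~\eqref{eq:weighted-eigenvalue-translimit-preliminaries}. Writing $\beta = N+\alpha$ and associating with $\psi \in H^1_{0,rad}(\B)$ the function $\Psi(t) = \psi(e^{-t/\beta})$, a direct computation based on the definition~\eqref{eq:U-alpha-definition} (which gives $u_\alpha(e^{-t/\beta}) = \beta^{2/(p-2)}U_\alpha(t)$) shows that~\eqref{eq:weighted-eigenvalue-problem-reduced}, read in the weak form associated with its quadratic form, is equivalent to
\[
-\Psi'' + (N-2)\varepsilon\,\Psi' - (p-1)\,e^{-(1-(N-2)\varepsilon)t}|U_\alpha|^{p-2}\Psi = \nu\,\Psi \quad \text{on } (0,\infty), \qquad \Psi(0)=0,\quad \Psi \in L^\infty(0,\infty),
\]
with $\nu = \mu/\beta^2$, whose associated Rayleigh quotient has numerator $\int_0^\infty\!\big(e^{-(N-2)\varepsilon t}|\Psi'|^2 - (p-1)e^{-t}|U_\alpha|^{p-2}\Psi^2\big)dt$ and denominator $\int_0^\infty e^{-(N-2)\varepsilon t}\Psi^2\,dt$. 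Call this problem $(E_\varepsilon)$; it is self-adjoint with respect to $e^{-(N-2)\varepsilon t}\,dt$, and conjugating $\Psi$ by $e^{-(N-2)\varepsilon t/2}$ puts it in standard Schr\"odinger form on $L^2(0,\infty)$ with an exponentially decaying potential, so its essential spectrum lies in $[0,\infty)$ and its negative eigenvalues are isolated below it. Since the transformation is a bijection of the relevant Hilbert spaces,~\eqref{eigenvalue-curves-first} shows that $(E_\varepsilon)$ has exactly $K$ negative eigenvalues, namely $\nu_i(\varepsilon) = \mu_i(\alpha)/\beta^2$; and setting $\varepsilon=0$ and using $U_\alpha \to (-1)^{K-1}U_\infty$ (hence $|U_\alpha|^{p-2}\to |U_\infty|^{p-2}$) from Proposition~\ref{limit-shape} turns $(E_\varepsilon)$ into precisely problem~\eqref{eq:weighted-eigenvalue-translimit-preliminaries}, which I denote $(E_0)$.

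Next I would analyze $(E_0)$ as a half-line Schr\"odinger operator with the nonpositive, exponentially decaying potential $-(p-1)e^{-t}|U_\infty|^{p-2}$; its negative spectrum is then a finite set of simple eigenvalues below $[0,\infty)$, simplicity holding because an $L^\infty$ solution with $\nu<0$ decays exponentially and is thus determined up to a scalar by $\Psi(0)=0$. The crucial point is that $(E_0)$ has exactly $K$ negative eigenvalues, so that $\nu_K^*<0$. For "at least $K$", one cuts $U_\infty$ at its $K-1$ interior zeros (smoothly truncating the outermost piece at a large radius $R$): using $-U_\infty''=e^{-t}|U_\infty|^{p-2}U_\infty$ and integrating by parts, the Dirichlet form of $(E_0)$ on the $j$-th piece equals $-(p-2)\int e^{-t}|U_\infty|^{p}\,dt<0$ up to an $o(1)$ boundary term controlled by $U_\infty'(R)\to 0$, and these pieces have disjoint supports. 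For "at most $K$": if $(E_0)$ had $K+1$ negative eigenvalues, a $(K+1)$-dimensional negative subspace of smooth compactly supported functions would remain negative for $(E_\varepsilon)$ with $\varepsilon$ small, by convergence of the coefficients, contradicting the count above. Dividing~\eqref{gladiali-mu-i-est} by $\beta^2$ and letting $\alpha\to\infty$ additionally gives $\nu_i^*\le -\tfrac14$ for $i\le K-1$.

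It remains to extract the regularity and the expansions. Global $C^1$-dependence of $\alpha\mapsto\mu_i(\alpha)$ on $(\alpha_p,\infty)$ follows from analytic perturbation theory (Kato--Rellich), given that $\alpha\mapsto u_\alpha$ is real-analytic into $C^2(\overline{\B})$ (implicit function theorem, $u_\alpha$ being the unique radial $K$-nodal solution), that the operators in~\eqref{eq:weighted-eigenvalue-problem-reduced} then depend analytically on $\alpha$, and that the $\mu_i(\alpha)<0$ are simple and isolated below the essential spectrum by~\eqref{eigenvalue-curves-first}. The leading-order asymptotics $\mu_i(\alpha)=\nu_i^*\alpha^2+o(\alpha^2)$ already follow from Proposition~\ref{limit-shape} via min--max convergence $\nu_i(\varepsilon)\to\nu_i^*$; for the sharper expansion~\eqref{expansions} I would upgrade Proposition~\ref{limit-shape} to a $C^1$ statement in $\varepsilon$ at $\varepsilon=0$: the transformed profile equation $-U'' + (N-2)\varepsilon\,U' = e^{-(1-(N-2)\varepsilon)t}|U|^{p-2}U$ (with $U(0)=0$, $U$ bounded) has coefficients analytic in $\varepsilon$, and since $0$ is not in the point spectrum of $(E_0)$, the profile $(-1)^{K-1}U_\infty$ is a nondegenerate solution at $\varepsilon=0$ in a suitable exponentially weighted space, so the implicit function theorem yields a $C^1$ curve $\varepsilon\mapsto U_\alpha = (-1)^{K-1}U_\infty + O(\varepsilon)$. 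Then $\varepsilon\mapsto(E_\varepsilon)$ is a $C^1$ family of self-adjoint operators on $[0,\varepsilon_0)$ with the simple isolated eigenvalue $\nu_i^*$ at $\varepsilon=0$, so Kato's theory gives $\nu_i\in C^1([0,\varepsilon_0))$ with $\nu_i(\varepsilon)=\nu_i^*+c_i^{(1)}\varepsilon+o(\varepsilon)$ and $\nu_i'(\varepsilon)\to c_i^{(1)}$ as $\varepsilon\to 0^+$. Undoing the substitution, $\mu_i(\alpha)=(N+\alpha)^2\,\nu_i\big((N+\alpha)^{-1}\big)$, so inserting this expansion gives the first formula in~\eqref{expansions}, and differentiating — using $\mu_i'(\alpha)=2(N+\alpha)\nu_i(\varepsilon)-\nu_i'(\varepsilon)$ with $\varepsilon=(N+\alpha)^{-1}$ — gives the second, with $c_i^*=2N\nu_i^*+c_i^{(1)}$. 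Finally, since $\nu_i^*<0$, the term $2\nu_i^*\alpha$ dominates $\mu_i'(\alpha)$ for large $\alpha$, uniformly in $i=1,\dots,K$, so $\mu_i$ is strictly decreasing on $[\alpha^*,\infty)$ for some $\alpha^*>0$.

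The two genuinely delicate steps are (i) pinning down the exact number of negative eigenvalues of $(E_0)$ and excluding a zero-energy eigenvalue or resonance, so that $\nu_K^*<0$, and (ii) upgrading Proposition~\ref{limit-shape} to the $C^1$-dependence of $U_\alpha$ on $\varepsilon=(N+\alpha)^{-1}$ near $\varepsilon=0$, which is what legitimizes the first-order perturbation expansion and hence the $o(\alpha)$ remainder in~\eqref{expansions}; the non-compactness of the half-line is handled throughout by working in exponentially weighted spaces, which is justified precisely because the negative eigenvalues stay uniformly below the essential spectrum.
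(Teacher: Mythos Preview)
Your overall strategy coincides with the paper's: the same change of variables $r=e^{-t/(N+\alpha)}$, the same rescaled eigenvalue problem $(E_\varepsilon)$ (the paper uses $\gamma=(N-2)\varepsilon$), the same limit problem $(E_0)$, and the same scheme of obtaining the expansions \eqref{expansions} by showing $\varepsilon\mapsto\nu_i(\varepsilon)$ is $C^1$ up to $\varepsilon=0$ and then differentiating $\mu_i(\alpha)=(N+\alpha)^2\nu_i((N+\alpha)^{-1})$. Your computation of $c_i^*$ agrees with the paper's.

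There is, however, a genuine gap that your proposal does not address. You assert that ``$\varepsilon\mapsto(E_\varepsilon)$ is a $C^1$ family of self-adjoint operators'' and then invoke Kato's theory. This step fails in any standard topology when $p\in(2,3]$: the potential of $(E_\varepsilon)$ is $(p-1)e^{(\gamma-1)t}|U_\gamma|^{p-2}$, and the map $U\mapsto|U|^{p-2}$ is \emph{not} differentiable between $C^0$- (or $L^\infty$-) type spaces because $|u|^{p-2}$ has cusps at the zeros of $u$, and $U_\gamma$ has $K-1$ interior zeros. You correctly flag the $C^1$-dependence of the \emph{profile} $U_\gamma$ as delicate, but that one is in fact harmless since $u\mapsto|u|^{p-2}u$ is $C^1$ for $p>2$; the real obstruction is the linearized potential $|U_\gamma|^{p-2}$. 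The paper identifies this as ``certainly the hardest step'' and resolves it by abandoning uniform norms on the image side: it introduces the weighted spaces $L^1_\delta(I)$ and $W^2_\delta(I)$, proves (Lemma~4.2) that $u\mapsto|u|^q$ is $C^1$ from the open set of $C^1$-functions with finitely many \emph{simple} zeros into $L^1_0(I)$, and then applies the implicit function theorem directly to the map $G(\gamma,\Psi,\nu)$ of \eqref{def-G-implicit} rather than invoking abstract Kato--Rellich theory. Without this, your appeal to analytic perturbation theory is unjustified both for the asymptotic $C^1$-regularity at $\varepsilon=0$ and for the global $C^1$-regularity on $(\alpha_p,\infty)$.

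A secondary remark: your count ``exactly $K$ negative eigenvalues for $(E_0)$'' is arranged differently from the paper. For ``at least $K$'' the paper uses the auxiliary function $w=U_0'-\tfrac{1}{p-2}U_0$, which solves the linearized equation at $\nu=0$ and whose $K$ zeros in $(0,\infty)$ are counted directly; Sturm comparison then gives $\nu_K(0)<0$. Your test-function argument with the pieces of $U_\infty$ between consecutive zeros is plausible for the inner pieces, but for the outermost piece you need to be careful: $U_\infty$ tends to a \emph{nonzero} limit, so a smooth cutoff at $R$ contributes a term of size $|U_\infty(R)|^2\sim c>0$ to the numerator which does not vanish as $R\to\infty$; you would have to argue via the growing denominator to conclude negativity, and this yields only that the Rayleigh quotient tends to $0^-$. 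The paper's Sturm-comparison route via $w$ avoids this.
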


\begin{remark}\label{introduction-remark-1}
{\rm The strict monotonicity of the curves $\mu_i$ on $[\alpha^*, \infty)$ will be of key importance for the derivation of bifurcation of nonradial solutions via variational bifurcation theory. For this we require the derivative expansion in (\ref{expansions}), but we do not need additional information on the constants $c_i^*$ since $\nu^*_i<0$ for $i=1,\dots,K$. Our proof of (\ref{expansions}) gives rise to the following characterization of the constants $c_i^*$: For fixed $i \in \{1,\dots,K\}$, we have 
$$
c_i^*=  -(2N \nu_i^* + N-2)(p-1) \int_0^\infty \left( t e^{-t} |U_\infty|^{p-2}\Psi^2 + (p-2) e^{-t} |U_\infty|^{p-4} U_\infty 
V \Psi^2 \right) \, dt,
$$
where $U_\infty$ is given in Proposition~\ref{limit-shape}, $V$ is the unique bounded solution of the problem   
$$
-V'' - (p-1)e^{-t}|U_\infty|^{p-2}V = U_\infty' - t e^{-t}|U_\infty|^{p-2}U_\infty \quad \text{in $[0,\infty)$,}\qquad V(0)=0
$$
and $\Psi$ is the (up to sign unique) eigenfunction of (\ref{eq:weighted-eigenvalue-translimit-preliminaries}) associated with the eigenvalue $\nu_i^*$ with $\int_0^\infty \Psi^2 \, dt =1$.}  
\end{remark}

The strict monotonicity of the curves $\mu_i$ for large $\alpha$ asserted in Theorem \ref{spectral-curves} allows us to deduce the following useful properties related to nondegeneracy and a change of the Morse index of the functions $u_\alpha$.

\begin{corollary}
\label{corollary-on-eigenvalue-curves}    
Let $p > 2$. For every $i \in \{1,\dots,K\}$, there exist $\ell_i \in \N \cup \{0\}$ and sequences of numbers $\alpha_{i,\ell} \in (\alpha_p,\infty)$, $\eps_{i,\ell}>0$, $\ell \ge \ell_i$ with the following properties:
\begin{itemize}
\item[(i)] $\alpha_{i,\ell} \to \infty$ as $\ell \to \infty$.
\item[(ii)]  $\mu_i(\alpha_{i,\ell})+ \lambda_\ell = 0$. In particular, $u_{\alpha_{i,\ell}}$ is degenerate.  
\item[(iii)] $u_\alpha$ is nondegenerate for $\alpha \in (\alpha_{i,\ell}-\eps_{i,\ell},\alpha_{i,\ell}+\eps_{i,\ell})$, $\alpha \not = \alpha_{i,\ell}$.
\item[(iv)] For $\eps \in (0,\eps_{i,\ell})$ the Morse index of $u_{\alpha_{i,l}+\eps}$ is strictly larger than the Morse index of $u_{\alpha_{i,l}-\eps}$.
\end{itemize}
\end{corollary}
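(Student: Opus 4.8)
The plan is to fix $i \in \{1,\dots,K\}$ and exploit the asymptotic expansion $\mu_i(\alpha) = \nu_i^* \alpha^2 + c_i^* \alpha + o(\alpha)$ from Theorem \ref{spectral-curves}, together with the eventual strict monotonicity of $\mu_i$, to track when the quantity $\mu_i(\alpha) + \lambda_\ell$ vanishes. Since $\nu_i^* < 0$, we have $\mu_i(\alpha) \to -\infty$, and more precisely $\mu_i(\alpha) \sim \nu_i^* \alpha^2$; on the other hand, the eigenvalues $\lambda_\ell = \ell(\ell+N-2) \to +\infty$ of the Laplace–Beltrami operator form a discrete unbounded increasing sequence. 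First I would choose $\alpha^* > \alpha_p$ as in Theorem \ref{spectral-curves} so that $\mu_i$ is strictly decreasing on $[\alpha^*,\infty)$, and set $\ell_i$ to be the smallest integer with $\lambda_{\ell_i} \ge -\mu_i(\alpha^*) = |\mu_i(\alpha^*)|$, i.e. with $-\mu_i(\alpha^*) - \lambda_{\ell_i} \le 0$. Then for each $\ell \ge \ell_i$ the function $\alpha \mapsto f_{i,\ell}(\alpha) := \mu_i(\alpha) + \lambda_\ell$ is continuous, satisfies $f_{i,\ell}(\alpha^*) \ge 0$ (with equality only in a degenerate boundary case, which one handles by enlarging $\alpha^*$ slightly or shifting $\ell_i$), and tends to $-\infty$ as $\alpha \to \infty$; by the intermediate value theorem it has a zero. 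Since $\mu_i$ is strictly decreasing on $[\alpha^*,\infty)$, this zero $\alpha_{i,\ell}$ is unique in $[\alpha^*,\infty)$, which gives (ii): $\mu_i(\alpha_{i,\ell}) + \lambda_\ell = 0$, hence by Proposition \ref{spectral-curves-0}, $u_{\alpha_{i,\ell}}$ is degenerate.

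For (i), note that $\alpha_{i,\ell}$ is determined by $\mu_i(\alpha_{i,\ell}) = -\lambda_\ell \to -\infty$; since $\mu_i(\alpha) \to -\infty$ only as $\alpha \to \infty$ (it is finite and continuous on the bounded portion of its domain, and $\mu_i(\alpha) \ge \mu_i(\alpha^*) > -\infty$ is false — rather $\mu_i$ is bounded on compact subsets), the preimages $\alpha_{i,\ell}$ must escape to $+\infty$. More carefully, from $\mu_i(\alpha_{i,\ell}) = -\lambda_\ell$ and $\mu_i(\alpha) = \nu_i^* \alpha^2 + O(\alpha)$ with $\nu_i^* < 0$ we even get the quantitative rate $\alpha_{i,\ell} \sim \sqrt{\lambda_\ell / |\nu_i^*|} \to \infty$, which proves (i). For (iii), at $\alpha = \alpha_{i,\ell}$ the derivative expansion gives $\mu_i'(\alpha_{i,\ell}) = 2\nu_i^* \alpha_{i,\ell} + c_i^* + o(1) < 0$ for $\ell$ large (equivalently $\alpha_{i,\ell}$ large); in particular $\mu_i$ is strictly monotone near $\alpha_{i,\ell}$, so $\mu_i(\alpha) + \lambda_\ell \ne 0$ for $\alpha$ in a punctured neighborhood of $\alpha_{i,\ell}$. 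One must also ensure that no other pair $(j,m)$ causes degeneracy nearby: since for fixed $\alpha$ only finitely many pairs $(j,m)$ with $j \in \{1,\dots,K\}$, $m \in \N\cup\{0\}$ satisfy $\mu_j(\alpha) + \lambda_m$ close to $0$, and each such curve $\alpha \mapsto \mu_j(\alpha) + \lambda_m$ is $C^1$ and — for large $\alpha$ — strictly decreasing hence has at most one zero, one can shrink the neighborhood to a radius $\eps_{i,\ell} > 0$ excluding all of them; by Proposition \ref{spectral-curves-0} this yields nondegeneracy of $u_\alpha$ for $0 < |\alpha - \alpha_{i,\ell}| < \eps_{i,\ell}$.

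For (iv), recall from Proposition \ref{spectral-curves-0} that the Morse index is $m(u_\alpha) = \sum_{(j,m) \in E^-(\alpha)} d_m$ where $E^-(\alpha) = \{(j,m) : \mu_j(\alpha) + \lambda_m < 0\}$. As $\alpha$ increases through $\alpha_{i,\ell}$, the curve $\mu_i(\cdot) + \lambda_\ell$ strictly decreases through $0$ (by the derivative sign computed above), so the pair $(i,\ell)$ enters $E^-$; shrinking $\eps_{i,\ell}$ further so that no pair leaves $E^-$ on $(\alpha_{i,\ell}-\eps_{i,\ell}, \alpha_{i,\ell}+\eps_{i,\ell})$ — possible since on this interval every relevant curve $\mu_j(\cdot)+\lambda_m$ is monotone and changes sign at most once, and only the ones crossing at $\alpha_{i,\ell}$ do so, which (after the shrinking in (iii)) means only those $(j,m)$ with $\mu_j(\alpha_{i,\ell}) + \lambda_m = 0$ — we conclude that $m(u_{\alpha_{i,\ell}+\eps}) \ge m(u_{\alpha_{i,\ell}-\eps}) + d_\ell > m(u_{\alpha_{i,\ell}-\eps})$, since $d_\ell \ge 1$. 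The main obstacle is the bookkeeping in (iii) and (iv): one must rule out that several eigenvalue curves $\mu_j(\cdot) + \lambda_m$ cross zero at the same or nearby parameter values in a way that could cancel the Morse index jump, and guarantee a uniform punctured neighborhood where $u_\alpha$ stays nondegenerate. This is handled purely by the local finiteness of crossings, the $C^1$-regularity of the $\mu_j$, and the eventual strict monotonicity from Theorem \ref{spectral-curves} — there is no analytic difficulty beyond choosing $\ell_i$, and then $\alpha^*$ and the $\eps_{i,\ell}$, large and small enough respectively.
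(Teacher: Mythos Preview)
Your proposal is correct and follows essentially the same approach as the paper: fix $\alpha^*$ so that all $\mu_j$ are strictly decreasing on $[\alpha^*,\infty)$, define $\ell_i$ via the value $\mu_i(\alpha^*)$, obtain the unique crossings $\alpha_{i,\ell}$ by the intermediate value theorem plus monotonicity, and then use a local finiteness argument (only finitely many pairs $(j,\ell')$ can produce a crossing in a fixed compact $\alpha$-interval) together with Proposition~\ref{spectral-curves-0} to secure (iii) and (iv). The paper organizes the finiteness step slightly more explicitly by introducing the finite set $N_{i,\ell}$ of all pairs $(j,\ell')$ admitting a zero of $\mu_j(\cdot)+\lambda_{\ell'}$ on $[\alpha_*,\alpha_{i,\ell}+\delta_{i,\ell}]$, and then expresses the Morse index jump as $\sum_{(j,\ell') \in M_{i,\ell}} d_{\ell'}$ over the pairs crossing exactly at $\alpha_{i,\ell}$; your phrasing in terms of pairs entering $E^-$ and none leaving (by monotonicity) is equivalent. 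Two minor points: since $\alpha_{i,\ell} \ge \alpha^*$ by construction, you already have $\mu_i'(\alpha_{i,\ell})<0$ for \emph{all} $\ell \ge \ell_i$, not just large $\ell$; and your quantitative rate $\alpha_{i,\ell}\sim \sqrt{\lambda_\ell/|\nu_i^*|}$ is a nice addition that the paper does not state.
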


With the help of Corollary~\ref{corollary-on-eigenvalue-curves} and an abstract bifurcation result in \cite{kielhoefer:1988}, we will derive our second main result on the bifurcation of nonradial solutions from the branch $\alpha \mapsto u_\alpha$.

\begin{theorem}
\label{thm-bifurcation}
Let $2<p<\frac{2N}{N-2}$, and let $K \in \N$, $i \in \{1,\dots,K\}$ be fixed. 
Then the points $\alpha_{i,\ell}$, $\ell \ge \ell_i$ are bifurcation points for nonradial solutions of (\ref{1.4}).

 More precisely, for every $\ell \ge \ell_i$, there exists a sequence $(\alpha_n,u^n)_n$ in $(0,\infty) \times C^2(\ov\B)$ with the following properties: 
\begin{itemize}
\item[(i)] $\alpha_n \to \alpha_{i,\ell}$, and $u^n \to u_{\alpha_{i,\ell}}$ in $C^2(\ov\B)$.
\item[(ii)] For every $n \in \N$, $u^n$ is a nonradial solution of (\ref{1.4}) with $\alpha= \alpha_n$ having precisely $K$ nodal domains $\Omega_1,\dots,\Omega_K$ such that $0 \in \Omega_1$, $\Omega_1$ is homeomorphic to a ball and $\Omega_2,\dots,\Omega_K$ are homeomorphic to annuli. 
\end{itemize}
Here, $\ell_i \in \N \cup \{0\}$ and the values $\alpha_{i,\ell}$ are given in Corollary~\ref{corollary-on-eigenvalue-curves}.
\end{theorem}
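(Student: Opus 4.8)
\emph{Proof plan.} The plan is to recast \eqref{1.4} as an abstract bifurcation problem for the associated family of energy functionals and to apply the bifurcation theorem for potential operators of \cite{kielhoefer:1988}, whose hypotheses are exactly what Corollary~\ref{corollary-on-eigenvalue-curves} delivers; the nonradiality and the precise nodal structure of the bifurcating solutions are then extracted a posteriori from $C^2$-convergence. We first record that $2<p<\frac{2N}{N-2}$ forces $p<2^*_\alpha$ for \emph{every} $\alpha\geq 0$, so that the embedding $H^1_0(\B)\embed L^p(\B)$ is compact and, for each $\alpha>\alpha_p$, the functional $J_\alpha(u)=\frac12\int_\B|\nabla u|^2\,dx-\frac1p\int_\B|x|^\alpha|u|^p\,dx$ is $C^2$ on $H^1_0(\B)$, with critical points precisely the weak — hence, by elliptic regularity, classical $C^2(\ov\B)$ — solutions of \eqref{1.4}. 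Using that $\alpha\mapsto u_\alpha$ is a $C^1$-curve in $C^2(\ov\B)\cap H^1_0(\B)$, as already underlies Theorem~\ref{spectral-curves}, we translate this branch to the origin by setting $G_\alpha(v):=J_\alpha(u_\alpha+v)$. Then $v=0$ is a critical point of $G_\alpha$ for all $\alpha>\alpha_p$, $G_\alpha$ is $C^2$ and depends $C^1$-smoothly on $\alpha$, $\nabla G_\alpha$ is a compact perturbation of the identity on $H^1_0(\B)$ (the nonlinear part factors through the compact embedding into $L^p$), and $D^2G_\alpha(0)$ is the operator induced by the quadratic form $q_\alpha$ of $L^\alpha$; in particular the Morse index of the critical point $v=0$ of $G_\alpha$ equals $m(u_\alpha)$.

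Fixing $i\in\{1,\dots,K\}$ and $\ell\geq\ell_i$, and taking $\alpha_{i,\ell}$, $\eps_{i,\ell}$ as in Corollary~\ref{corollary-on-eigenvalue-curves}, we read off from parts (iii)--(iv) of that corollary that $v=0$ is a nondegenerate critical point of $G_\alpha$ for $\alpha\in(\alpha_{i,\ell}-\eps_{i,\ell},\alpha_{i,\ell}+\eps_{i,\ell})\setminus\{\alpha_{i,\ell}\}$, while its Morse index strictly increases as $\alpha$ crosses $\alpha_{i,\ell}$. Since $G_\alpha$ has variational structure with $\nabla G_\alpha$ a compact perturbation of the identity, the abstract theorem in \cite{kielhoefer:1988} applies: a change of Morse index across $\alpha_{i,\ell}$, with nondegenerate endpoints, forces bifurcation, and no crossing or simplicity condition on the kernel is needed. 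This yields sequences $\alpha_n\to\alpha_{i,\ell}$ and $0\neq v^n\to 0$ in $H^1_0(\B)$ with $\nabla G_{\alpha_n}(v^n)=0$, so that $u^n:=u_{\alpha_n}+v^n$ are classical solutions of \eqref{1.4} with $\alpha=\alpha_n$, satisfying $u^n\neq u_{\alpha_n}$ and $u^n\to u_{\alpha_{i,\ell}}$ in $H^1_0(\B)$. A routine elliptic bootstrap — available because $p<2^*$ — provides uniform $L^\infty$-bounds for $u^n$, hence for $|x|^{\alpha_n}|u^n|^{p-2}u^n$, and Schauder estimates together with the $C^1$-dependence $\alpha\mapsto u_\alpha$ upgrade the convergence to $u^n\to u_{\alpha_{i,\ell}}$ in $C^2(\ov\B)$, which gives (i).

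For (ii) we argue as follows. The radial function $r\mapsto u_{\alpha_{i,\ell}}(r)$ has exactly $K-1$ zeros $0<r_1<\dots<r_{K-1}<1$, all simple (by uniqueness for the radial ODE, $u_{\alpha_{i,\ell}}(r_j)=0$ implies $u_{\alpha_{i,\ell}}'(r_j)\neq 0$), and $\partial_\nu u_{\alpha_{i,\ell}}\neq 0$ on $\partial\B$ by Hopf's lemma; hence $\nabla u_{\alpha_{i,\ell}}$ does not vanish on the interior nodal set $\bigcup_j\{|x|=r_j\}$, which splits $\B$ into the ball $\Omega_1\ni 0$ and the annuli $\Omega_2,\dots,\Omega_K$. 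By $C^1$-convergence, for $n$ large $u^n$ has no interior zeros near $0$ or near $\partial\B$, and for each $j$ the implicit function theorem provides a $C^1$ map $\theta\mapsto\rho_j^n(\theta)$ on $\mathbb{S}^{N-1}$ — since $\partial_\rho u^n(\rho\theta)$ is uniformly close to $u_{\alpha_{i,\ell}}'(r_j)\neq 0$ for $\rho$ near $r_j$ — such that the interior nodal set of $u^n$ is the union of the disjoint, nested topological spheres $S_j^n=\{\rho_j^n(\theta)\theta:\theta\in\mathbb{S}^{N-1}\}$; thus $u^n$ has exactly $K$ nodal domains with the asserted topology, with $0$ in the innermost one. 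Finally, for $n$ large $u^n(0)\to u_{\alpha_{i,\ell}}(0)>0$, so $u^n(0)>0$; if $u^n$ were radial, Nagasaki's uniqueness result \cite{nagasaki} would force $u^n=u_{\alpha_n}$, contradicting $u^n\neq u_{\alpha_n}$. Hence $u^n$ is nonradial for all large $n$, and discarding the first finitely many terms completes the proof.

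\emph{Main obstacle.} The genuine analytic difficulty of the paper is carried by Theorem~\ref{spectral-curves} and Corollary~\ref{corollary-on-eigenvalue-curves}; granting these, the steps above are comparatively soft. The points that require care are (a) matching the present set-up to the exact hypotheses and conclusion of \cite{kielhoefer:1988} — in particular checking that $\nabla G_\alpha$ is a compact perturbation of the identity, that $\{(\alpha,u_\alpha)\}$ is a $C^1$ curve of critical points, and that a pure change of Morse index (with a possibly high-dimensional kernel) already suffices for bifurcation of a potential operator — and (b) the routine but slightly delicate bookkeeping in passing from $H^1_0$- to $C^2$-convergence and in controlling the perturbed nodal set via the implicit function theorem. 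We note that the $O(N)$-symmetry of \eqref{1.4} is not actually needed here: the Morse-index jump already occurs in the full space $H^1_0(\B)$, and nonradiality of the bifurcating solutions comes instead from the uniqueness of the radial solution with $K$ nodal domains.
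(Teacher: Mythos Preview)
Your proposal is correct and follows essentially the same approach as the paper: translate the radial branch to the origin, apply Kielh\"ofer's variational bifurcation theorem using the Morse-index jump from Corollary~\ref{corollary-on-eigenvalue-curves}, upgrade the convergence to $C^2(\ov\B)$ by elliptic regularity, read off the nodal structure from $C^1$-closeness to $u_{\alpha_{i,\ell}}$, and conclude nonradiality from Nagasaki's uniqueness. The only cosmetic difference is that the paper sets up the abstract problem in $L^2(\B)$ with domain $H^2(\B)\cap H^1_0(\B)$ rather than in $H^1_0(\B)$, but both formulations feed into \cite{kielhoefer:1988} in the same way.
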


As mentioned above, Theorem~\ref{thm-bifurcation} will be derived from Corollary~\ref{corollary-on-eigenvalue-curves} and variational bifurcation theory. For this we reformulate (\ref{1.4}) as a bifurcation equation in the Hilbert space $H^1_0(\B)$ and show that, as a consequence of Corollary~\ref{corollary-on-eigenvalue-curves}, the crossing number of an associated operator family is nonzero at the points $\alpha_{i,\ell}$. Thus the main theorem in \cite{kielhoefer:1988} applies and yields that the points $\alpha_{i,\ell}$, $\ell \ge \ell_i$ are bifurcation points for solutions of (\ref{1.4}) along the branch $\alpha \mapsto u_\alpha$. To see that bifurcation of {\em nonradial} solutions occurs, it suffices to note that the solutions $u_\alpha$ are radially nondegenerate for $\alpha>0$, i.e., the kernel of $L^\alpha$ does not contain radial functions. A proof of the latter fact can be found in \cite[Theorem 1.7]{Amadori-Gladiali2}, and it also follows from results in \cite{Yanagida}. 

Since Corollary~\ref{corollary-on-eigenvalue-curves} is a rather direct consequence of Theorem~\ref{spectral-curves}, the major part of this paper is concerned with the proofs of Proposition~\ref{limit-shape} and Theorem~\ref{spectral-curves}. It is not difficult to see that, via the transformation given in (\ref{eq:U-alpha-definition}), the H\'enon equation (\ref{1.4}) transforms into a family of problems depending on the new parameter $\gamma=\frac{N-2}{N+\alpha}$ which admits a well-defined limit problem as $\gamma \to 0^+$ given by (\ref{eq:limit-U}). It is then necessary to choose a proper function space which allows to apply the implicit function theorem at $\gamma=0$, and this yields the convergence statement in Proposition~\ref{limit-shape}. The idea of the proof of Theorem~\ref{spectral-curves} is similar, as we use the same transformation (up to scaling) to rewrite the $\alpha$-dependent eigenvalue problem (\ref{eq:weighted-eigenvalue-problem-reduced}) as a $\gamma$-dependent eigenvalue problem on the interval $[0,\infty)$. We shall then see that (\ref{eq:weighted-eigenvalue-translimit-preliminaries}) arises as the limit of the transformed eigenvalue problems as $\gamma \to 0^+$. In order to obtain $C^1$-expansions of eigenvalue curves, we wish to apply the implicit function theorem again at the point $\gamma=0$. Here a major difficulty arises in the case where $p \in (2,3]$, as the map $U \mapsto |U|^{p-2}$ fails to be differentiable between standard function spaces. We overcome this problem by restricting this map to the subset of $C^1$-functions on $[0,\infty)$ having only a finite number of simple zeros and by considering its differentiability with respect to a weighted uniform 
$L^1$-norm, see Sections~\ref{sec:spectr-asympt-refeq} and \ref{sec:differentiability-g}. This is certainly the hardest step in the proof of Theorem~\ref{spectral-curves}.

It seems instructive to compare the transformations used in the present paper with the ones used in \cite{Moreira-dos-Santos-Pacella,Amadori-Gladiali2}. Transforming a radial solution $u$ of (\ref{1.4}) by setting 
$w(\tau)=(\frac{2}{2+\alpha})^{\frac{2}{p-2}}u(\tau^{\frac{2}{2+\alpha}})$ for $\tau \in (0,1)$ leads to the problem 
\begin{equation}
\label{gladiali-transform}
-(t^{M-1}w')'=t^{M-1}|w|^{p-2}w\quad \text{in $(0,1)$,}\qquad \quad w'(0)= w(1)=0
\end{equation}
with $M=M(\alpha)= \frac{2(N+\alpha)}{2+\alpha}$. Via this transformation, the associated weighted singular eigenvalue problem (\ref{eq:weighted-eigenvalue-problem-reduced}) corresponds to the even more singular eigenvalue equation
\begin{equation}
\label{gladiali-transform-eigenvalue}
-(t^{M-1}\psi')'-(p-1) t^{M-1}|w|^{p-2}\psi= t^{M-3}\hat \nu \psi \qquad \text{in $(0,1)$},
\end{equation}
which is considered in $M$-dependent function spaces in \cite{Amadori-Gladiali2}. In principle, it should be possible to carry out our approach also via these transformations, but we found it easier to find appropriate parameter-independent function spaces in the framework we use here. We stress again that finding parameter-independent function spaces is essential for the application of the implicit function theorem. 

The paper is organized as follows.  
In Section~\ref{sec:limit-alpha-to}, we first recall some known results on radial solutions of \eqref{1.4} and properties of the associated linearized operators. We then study the asymptotic behavior of the functions $u_\alpha$ as $\alpha \to \infty$ and prove Proposition~\ref{limit-shape}.
Section~\ref{sec:spectr-asympt-refeq} is devoted to the proofs of Theorem~\ref{spectral-curves} and Corollary~\ref{corollary-on-eigenvalue-curves}.
In Section~\ref{sec:differentiability-g} we prove, in particular, the differentiability of the map $U \mapsto |U|^{p-2}$ for $p \in (2,3]$ in a suitable functional setting. 
In Section~\ref{sec:bifurc-almost-radi}, we finally prove the bifurcation result stated in Theorem~\ref{thm-bifurcation}.

\subsection*{Acknowledgement}

The authors wish to thank Francesca Gladiali for helpful discussions and for pointing out the paper \cite{Amadori-Gladiali2}.

\section{The limit shape of sign changing radial solutions of (\ref{1.4}) as $\alpha \to \infty$}
\label{sec:limit-alpha-to}

This section is devoted to the asymptotics of branches of sign changing radial solutions of (\ref{1.4}) as $\alpha \to \infty$. In particular, we will prove Proposition~\ref{limit-shape}. As before, we let $K \in \N$ be fixed, and we first recall a result on the existence, uniqueness and radial Morse index of a radial solution $u_\alpha$ of (\ref{1.4}) with $K$ nodal domains.
\begin{theorem}
\label{sec:bifurc-nonr-solut-1}
For every $p>2$ and $\alpha >\alpha_p$, equation~(\ref{1.4}) has a unique radial solution $u_\alpha \in C^2(\overline \B)$ with precisely $K$ nodal domains such that $u_\alpha(0)>0$. Furthermore, the linearized operator 
$$
L^\alpha : H^2(\B) \cap H^1_0(\B) \to L^2(\B),\qquad  L^\alpha \phi:= -\Delta \phi - (p-1) |x|^\alpha |u_\alpha|^{p-2} \phi
$$
is a Fredholm operator of index zero having the following properties for every $\alpha  \ge 0$:
\begin{enumerate}
\item[(i)] $u_\alpha$ is radially nondegenerate in the sense that the kernel of $L^\alpha$ does not contain radial functions.
\item[(ii)] $u_\alpha$ has radial Morse index $K$ in the sense that $L^\alpha$ has precisely $K$ negative eigenvalues  
corresponding to radial eigenfunctions in $H^2(\B) \cap H^1_0(\B)$.
\end{enumerate}
\end{theorem}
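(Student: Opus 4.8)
The plan is as follows. Existence, uniqueness and $C^2(\overline\B)$-regularity of a radial solution $u_\alpha$ of \eqref{1.4} with precisely $K$ nodal domains and $u_\alpha(0)>0$ is Nagasaki's theorem \cite{nagasaki}, which I would simply cite; the $C^2$-regularity up to $\partial\B$ and to the origin then follows from a routine elliptic bootstrap, as the right-hand side $|x|^\alpha|u_\alpha|^{p-2}u_\alpha$ is H\"older continuous on $\overline\B$. The Fredholm property of $L^\alpha\colon H^2(\B)\cap H^1_0(\B)\to L^2(\B)$, already recorded in the introduction, is soft and needs no ODE input: $-\Delta$ is an isomorphism between these spaces, the potential $V_\alpha:=(p-1)|x|^\alpha|u_\alpha|^{p-2}$ lies in $L^\infty(\B)$ (since $u_\alpha\in C^2(\overline\B)$ and $|x|^\alpha\le 1$ on $\B$), and multiplication by $V_\alpha$ is a compact operator from $H^2(\B)\cap H^1_0(\B)$ into $L^2(\B)$ by the Rellich theorem; hence $L^\alpha=-\Delta-V_\alpha$ is a compact perturbation of an isomorphism, so it is Fredholm of index zero, and it is self-adjoint as a bounded symmetric perturbation of the self-adjoint Dirichlet Laplacian.

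For the radial Morse index in (ii) I would prove matching lower and upper bounds. For the lower bound, write the nodal domains of $u_\alpha$ as concentric sets $A_1,\dots,A_K$, where $A_1$ is a ball and $A_2,\dots,A_K$ are annuli, and set $v_j:=u_\alpha|_{A_j}$, extended by zero to $\B$. Then $v_j\in H^1_{0,rad}(\B)$, and since $v_j$ solves $-\Delta v_j=|x|^\alpha|u_\alpha|^{p-2}v_j$ in $A_j$ with zero boundary values, an integration by parts gives, for the quadratic form $q_\alpha$ of $L^\alpha$,
\[
q_\alpha(v_j)=-(p-2)\int_{A_j}|x|^\alpha|u_\alpha|^p\,dx<0 .
\]
As the $v_j$ have pairwise disjoint supports, they span a $K$-dimensional subspace of $H^1_{0,rad}(\B)$ on which $q_\alpha$ is negative definite, so $L^\alpha$ has at least $K$ negative eigenvalues with radial eigenfunctions. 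For the upper bound I would use --- exactly as in \cite[Prop.~5.1]{Amadori-Gladiali2}, but carried out inside the invariant subspace of radial functions --- that the radial Morse index of $u_\alpha$ equals the number of negative eigenvalues of the weighted radial eigenvalue problem \eqref{eq:weighted-eigenvalue-problem-reduced}; the relevant points are that the weight $|x|^{-2}$ is positive on $\B$ and that $\int_\B|x|^{-2}\psi^2\,dx<\infty$ for every $\psi\in H^1_0(\B)$ by Hardy's inequality, so that Sylvester's law of inertia applies to this operator pencil (which has compact resolvent). By \eqref{eigenvalue-curves-first} (see \cite[p.~19 and Prop.~3.7]{Amadori-Gladiali2}) this number is exactly $K$, which proves (ii).

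It remains to treat the radial nondegeneracy in (i), and I expect this to be the main obstacle: the soft arguments above pin down the number of negative radial eigenvalues of $L^\alpha$ but say nothing about their strict separation from $0$. Since a nontrivial radial element of $\ker L^\alpha$ is the same as $\mu=0$ being an eigenvalue of the weighted radial problem \eqref{eq:weighted-eigenvalue-problem-reduced}, statement (i) is equivalent to the strict inequality $\mu_{K+1}(\alpha)>0$, and I would obtain it by invoking \cite[Theorem~1.7]{Amadori-Gladiali2} (see also \cite{Yanagida}). The argument there reduces, via a substitution of the type \eqref{gladiali-transform}, to a singular Sturm--Liouville problem on a bounded interval, and excludes a nontrivial bounded radial solution of the linearized equation by combining Sturm oscillation theory with the monotone dependence of $u_\alpha$ on the shooting data that already underlies Nagasaki's uniqueness proof, together with Pohozaev-type identities for \eqref{1.4}.
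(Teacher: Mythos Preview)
Your proposal is correct and follows essentially the same route as the paper: the paper does not give an independent proof but simply cites \cite{nagasaki} for existence and uniqueness, observes that $L^\alpha$ is a compact perturbation of the Dirichlet Laplacian for the Fredholm property, and defers both (i) and (ii) to \cite[Theorem~1.7]{Amadori-Gladiali2} (with \cite{Yanagida} mentioned as an alternative for (i)). Your write-up is slightly more self-contained in that you supply the standard nodal-domain argument for the lower bound in (ii) rather than citing it, but the substance and the key external inputs are identical.
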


Theorem~\ref{sec:bifurc-nonr-solut-1} is 
merely a combination of results in \cite{nagasaki} and \cite{Amadori-Gladiali2}. More precisely, the existence and uniqueness of $u_\alpha$ is proved in \cite{nagasaki}.
Note that the operator $L^\alpha$ is a compact perturbation of the isomorphism $-\Delta: H^2(\B) \cap H^1_0(\B) \to L^2(\B)$, which implies that it is a Fredholm operator of index zero. 
A proof of the radial nondegeneracy and radial Morse index can be found in \cite[Theorem 1.7]{Amadori-Gladiali2}. We remark here that the radial nondegeneracy can also be deduced from results in \cite{Yanagida}. 

\begin{remark}
(i) Since equation~(\ref{1.4}) remains invariant under a change of sign $u \mapsto -u$, it follows from Theorem~\ref{sec:bifurc-nonr-solut-1} that for every $p>2$ and $\alpha >\alpha_p$, equation~(\ref{1.4}) has precisely two radial solution $\pm u_\alpha \in C^2(\overline \B)$ with precisely $K$ nodal domains.\\
(ii) In \cite{nagasaki} it is also shown that for $p \geq \frac{2N + 2 \alpha}{N-2}$, the trivial solution is the only radial solution of equation~\eqref{1.4}.
\end{remark}

Next we recall that, in the radial variable, $u_\alpha$ solves 
\begin{equation}
  \label{eq:u-alpha-radial-variable-1}
-u_{rr} - \frac{N-1}{r}u_r = r^\alpha |u|^{p-2}u, \quad r \in (0,1), \qquad u'(0)=u(1)=0.  
\end{equation}
Inspired by Byeon-Wang \cite{B-W}, we transform equation (\ref{eq:u-alpha-radial-variable-1}), considering  
$$
U_\alpha: [0,\infty) \to \R, \qquad U_\alpha(t)= (N+\alpha)^{-\frac{2}{p-2}}\:
u_\alpha(e^{-\frac{t}{N+\alpha}}).
$$
By direct computation, we see that $U_\alpha$ is a bounded solution of the problem 
\begin{equation}
  \label{eq:U-gamma}
-(e^{-\gamma t}U')' = e^{-t}|U|^{p-2}U \quad \text{in $I:=[0,\infty)$,}\qquad U(0)=0.  
\end{equation}
with $\gamma= \gamma(\alpha)=\frac{N-2}{N+\alpha}$. Moreover, $U_\alpha$ has precisely $K-1$ zeros in $(0,\infty)$ and satisfies $\lim \limits_{t \to \infty}U_\alpha(t)>0$, which implies that $(-1)^{K-1}U_\alpha'(0)>0$. Considering the limit $\alpha \to \infty$ in (\ref{eq:u-alpha-radial-variable-1}) corresponds to sending $\gamma \to 0$ in (\ref{eq:U-gamma}), which leads to limit problem
\begin{equation}
\label{eq:limit-U-0-1}
-U'' = e^{-t}|U|^{p-2}U \quad \text{in $I$,}\qquad U(0)=0.  
\end{equation}
We first note the following facts regarding (\ref{eq:limit-U-0-1}).

\begin{proposition}
\label{sec:preliminaries-limit-problem-1}
Let $p>2$. The problem (\ref{eq:limit-U-0-1}) admits a unique bounded solution $U_\infty \in C^2(\overline I)$ with precisely $K-1$ zeros in $(0,\infty)$ and $U_\infty'(0)>0$. 
\end{proposition}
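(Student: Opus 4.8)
The plan is to reduce \eqref{eq:limit-U-0-1} by two successive changes of variable to the classical radial Lane--Emden problem in a planar disk, where existence and uniqueness of the $K$-nodal solution are already known. First, the substitution $s=e^{-t}$ (so $t\in I$ corresponds to $s\in(0,1]$) and $W(s):=U(-\ln s)$ transforms \eqref{eq:limit-U-0-1}, by a direct computation, into
\[
-(sW')'=|W|^{p-2}W \quad\text{in }(0,1),\qquad W(1)=0,
\]
and $U$ is bounded on $I$ precisely when $W$ stays bounded as $s\to0^{+}$; any such $W$ satisfies $sW'(s)\to0$ as $s\to0$ and extends to a $C^1$-function on $[0,1]$ (integrate $(sW')'=-|W|^{p-2}W$ near $s=0$). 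Next, setting $\sigma=\sqrt s$ and $V(\sigma):=W(\sigma^2)$ turns this into $-\frac1\sigma(\sigma V')'=4|V|^{p-2}V$ on $(0,1)$, i.e.\ $-\Delta V=4|V|^{p-2}V$ for the radial function $V$ on the unit ball of $\R^2$, with $V(1)=0$ and $V$ regular at the origin ($V'(0)=0$). Finally, the rescaling $V=4^{-1/(p-2)}\Phi$ removes the constant and gives the pure Lane--Emden problem
\[
-\Delta\Phi=|\Phi|^{p-2}\Phi \quad\text{in the unit ball of }\R^2,\qquad \Phi=0 \text{ on the boundary},\qquad \Phi \text{ radial}.
\]

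Both substitutions are bijective on the relevant intervals and sign-preserving, so they put interior zeros in bijection; hence $U$ has exactly $K-1$ zeros in $(0,\infty)$ if and only if $\Phi$ has exactly $K$ nodal domains. A short computation gives $U'(0)=-\tfrac12 V'(1)$, and since $\Phi$ has sign $(-1)^{K-1}\sgn(\Phi(0))$ near the boundary, the condition $U'(0)>0$ is equivalent to $\sgn(\Phi(0))=(-1)^{K-1}$. It now remains to quote the classical fact that for every $p>2$ and $K\in\N$ there is a unique radial solution of the planar Lane--Emden problem with precisely $K$ nodal domains and $\Phi(0)>0$ (the case $N=2$, $\alpha=0$ of the statement used in Theorem~\ref{sec:bifurc-nonr-solut-1}; see \cite{nagasaki}, or argue by shooting for $-(\sigma V')'=4\sigma|V|^{p-2}V$, $V'(0)=V(1)=0$, using monotonicity of the time map). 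Since the equation is odd, $\pm\Phi$ are the only radial solutions with $K$ nodal domains, and exactly one of them has the required sign at the origin. Pulling this solution back through the two substitutions yields $U_\infty$, and $U_\infty\in C^2(\overline I)$ follows by bootstrapping $U_\infty''=-e^{-t}|U_\infty|^{p-2}U_\infty$.

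The reduction is routine apart from some care with the chain-rule identities and with the singular endpoints $t=\infty$ (i.e.\ $s=0$) and $\sigma=0$; identifying which solutions of the transformed problems are admissible is the only slightly subtle point. A self-contained alternative is a direct shooting argument for \eqref{eq:limit-U-0-1}: from $U(0)=0$, $U'(0)=d>0$, the energy $E(t)=\tfrac12(U')^2+\tfrac1p e^{-t}|U|^p$ is non-increasing, whence $|U_d'|\le d$, global existence, and $|U_d(t)|\le dt$; then $U_d''=-e^{-t}|U_d|^{p-2}U_d$ is integrable, $U_d'$ converges, and $U_d$ is bounded if and only if $d=\int_0^\infty e^{-s}|U_d|^{p-2}U_d\,ds$. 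This fails (with $U_d>0$) for small $d$, while $U_d$ acquires arbitrarily many zeros for large $d$ (a rotation-number argument, using $E(t)\ge\tfrac12 d^2e^{-1}$ on $[0,1]$); since zeros of $U_d$ can be created or destroyed only through $t=+\infty$, a counting argument then isolates a value $d^*$ giving exactly $K-1$ zeros. The delicate part of this second route is uniqueness, which needs a time-map monotonicity argument; the reduction to the planar Lane--Emden problem is worth preferring precisely because it yields uniqueness at no cost.
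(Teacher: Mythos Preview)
Your argument is correct and takes a genuinely different route from the paper. The paper quotes Naito \cite{Naito} directly: existence of a bounded solution with $K-1$ zeros is \cite[Theorem~1]{Naito}, while uniqueness is obtained by exploiting the scaling--translation symmetry $U(t)\mapsto \kappa\,U(t-\ln|\kappa|^{p-2})$ of \eqref{eq:limit-U-0-1}: given two bounded $K$-nodal solutions $U_1,U_2$ with nonzero limits $\ell(U_j)$ at $+\infty$, one chooses $\kappa=\ell(U_1)/\ell(U_2)$ so that the shifted rescaling $\tilde U_2$ of $U_2$ matches $U_1$ at infinity, invokes the local uniqueness-at-infinity result \cite[Proposition~3.1]{Naito} to get $U_1\equiv\tilde U_2$ on a half-line, and then counts zeros to force the shift to be trivial.

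Your approach instead recognizes that the substitutions $s=e^{-t}$ and $\sigma=\sqrt s$ identify \eqref{eq:limit-U-0-1} with the radial Lane--Emden problem on the planar disk, where existence and uniqueness of the $K$-nodal solution are classical; this is effectively the $M\to 2$ limit of the transformation \eqref{gladiali-transform} discussed in the introduction, so it sits naturally in the paper's framework. The reduction is cleaner and more conceptual, delivering existence and uniqueness in one stroke, at the price of the endpoint regularity check at $s=0$ and an external citation for the planar problem. The paper's argument stays in the original variable and is self-contained modulo the two Naito references. One minor point: Theorem~\ref{sec:bifurc-nonr-solut-1} is stated under the standing hypothesis $N\ge 3$, so you should cite the two-dimensional Lane--Emden uniqueness directly rather than as ``the case $N=2$, $\alpha=0$'' of that theorem.
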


\begin{proof}
The existence of a bounded solution of \eqref{eq:limit-U-0-1} with precisely $K-1$ zeros in $(0,\infty)$ has been proved by Naito \cite[Theorem 1]{Naito}. To prove uniqueness, we first note that every solution $U$ of \eqref{eq:limit-U-0-1} is concave on intervals where $U>0$ and convex on intervals where $U<0$. From this we deduce that every bounded solution $U$ with finitely many zeros 
has a limit 
$$
\ell(U)= \lim_{t \to \infty}U(t) \not = 0.
$$
Next, we let $U_1$, $U_2$ be bounded solutions of \eqref{eq:limit-U-0-1} with precisely $K-1$ zeros in $(0,\infty)$. Moreover, we let 
$\kappa= \frac{\ell(U_1)}{\ell(U_2)}$, $c_\kappa:= \ln |\kappa|^{p-2}$ and consider 
$$
\tilde U_2: [c_\kappa, \infty) \to \R, \qquad \tilde U_2(t)= \kappa U_2(t-c_\kappa).
$$
Then $\tilde U_2$ solves the equation in \eqref{eq:limit-U-0-1} on $[c_\kappa, \infty)$ and satisfies $\tilde U_2(c_\kappa) = 0$. By construction we have 
$$
\lim_{t \to \infty}U_1(t)= \lim_{t \to \infty}\tilde U_2(t), 
$$
and thus the local uniqueness result at infinity given in \cite[Proposition 3.1]{Naito} implies that 
$$
U_1(t)= \tilde U_2(t) \qquad \text{for $t \ge \max \{0,c_\kappa\}$.}
$$
Since $U_1$ and $\tilde U_2$ have $K-1$ zeros in $(0,\infty)$, $(c_\kappa,\infty)$, respectively and $U_1(0)=\tilde U_2(c_\kappa)=0$, it follows that $c_\kappa=0$, hence $\kappa=1$ and therefore $U_1 \equiv U_2$. The uniqueness of $U_\infty$ thus follows. 
\end{proof}

In the following, it is more convenient to work with the parameter $\gamma= 
\frac{N-2}{N+\alpha} \in (0,\frac{N-2}{N})$ in place of $\alpha$. Hence, from now on, we will write $U_\gamma$ in place of $U_\alpha$. We also set $U_0:=(-1)^{K-1}U_\infty$, so that 
\begin{equation}
  \label{eq:lim-infty-U-0}
\lim_{t \to \infty}U_0(t)>0.  
\end{equation}
We wish to consider \eqref{eq:limit-U} and \eqref{eq:U-gamma} in suitable spaces of continuous functions. For $\delta \ge 0$, we let $C_\delta(\overline I)$ denote the space of all functions $v \in C(\overline I)$ such that
$$
\|v\|_{C_\delta} := \sup_{t \ge 0} e^{\delta t}|v(t)| < \infty,
$$
More generally, for an integer $k \ge 0$, we let 
$C_\delta^k(\overline I)$ denote the space of all functions 
$v \in C^k(\overline I)$ such that $v^{(j)} \in C_\delta(\overline I)$ for $j=1,\dots,k$. Then $C_\delta^k(\overline I)$ is a Banach space with norm 
$$
\|v\|_{C_\delta^k} := \sum_{j=0}^k \|v^{(j)}\|_{C_\delta} .
$$
We note the following. 
\begin{lemma}
  \label{compactness-C-delta-spaces}
Let $k > \ell \ge 0$ and $\delta_1 > \delta_2 \ge 0$. Then the 
embedding $C^{k}_{\delta_1}(I) \hookrightarrow C^{\ell}_{\delta_2}(I)$ is compact.
\end{lemma}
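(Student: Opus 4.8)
The plan is to apply a weighted form of the Arzel\`a--Ascoli theorem together with a diagonal extraction, using the extra derivative order to obtain equicontinuity on compact subintervals and the strict gap $\delta_1 > \delta_2$ to control the behaviour near $t = +\infty$.

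First I would take an arbitrary bounded sequence $(v_n)_n$ in $C^{k}_{\delta_1}(I)$, say $\|v_n\|_{C^k_{\delta_1}} \le M$ for all $n$, and show it has a subsequence converging in $C^{\ell}_{\delta_2}(I)$. Since $\ell < k$, for every $j \in \{0,\dots,\ell\}$ both $v_n^{(j)}$ and $v_n^{(j+1)}$ lie in $C_{\delta_1}(I)$ with norms bounded by $M$; in particular $|v_n^{(j)}(t)| \le M e^{-\delta_1 t}$ and $|v_n^{(j+1)}(t)| \le M$ for all $t \ge 0$. Hence on each interval $[0,R]$ the family $\{v_n^{(j)}\}_n$ is equibounded and uniformly Lipschitz, hence equicontinuous. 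Applying Arzel\`a--Ascoli successively on $[0,1], [0,2], \dots$ and for $j = 0, \dots, \ell$, combined with a diagonal argument, I extract a subsequence --- still denoted $(v_n)_n$ --- such that for each $j \in \{0,\dots,\ell\}$ the sequence $v_n^{(j)}$ converges uniformly on every compact subset of $I$ to some $w_j \in C(\overline I)$. Standard results on uniform limits of derivatives then give $w_0 \in C^\ell(\overline I)$ with $w_0^{(j)} = w_j$ for $j = 0,\dots,\ell$; set $v := w_0$.

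Next I would verify that $v \in C^{\ell}_{\delta_2}(I)$ and that $v_n \to v$ in this space. Passing to the pointwise limit in $|v_n^{(j)}(t)| \le M e^{-\delta_1 t}$ gives $|v^{(j)}(t)| \le M e^{-\delta_1 t}$, so $v \in C^\ell_{\delta_1}(I) \subseteq C^\ell_{\delta_2}(I)$. For the convergence, fix $j \le \ell$ and $\eps > 0$. For $t \ge R$,
\[
e^{\delta_2 t}\,|v_n^{(j)}(t) - v^{(j)}(t)| \le 2M\, e^{-(\delta_1 - \delta_2)t} \le 2M\, e^{-(\delta_1 - \delta_2)R},
\]
which is $< \eps$ once $R$ is chosen large enough; this is the only place the hypothesis $\delta_1 > \delta_2$ is used. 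On the now-fixed interval $[0,R]$ one has $e^{\delta_2 t} \le e^{\delta_2 R}$, and since $v_n^{(j)} \to v^{(j)}$ uniformly on $[0,R]$ we get $\sup_{t \in [0,R]} e^{\delta_2 t}|v_n^{(j)}(t) - v^{(j)}(t)| < \eps$ for $n$ large. Adding the two contributions and summing over the finitely many $j \in \{0,\dots,\ell\}$ shows $\|v_n - v\|_{C^\ell_{\delta_2}} \to 0$. Thus every bounded sequence in $C^k_{\delta_1}(I)$ admits a subsequence converging in $C^\ell_{\delta_2}(I)$, i.e.\ the embedding is compact.

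I do not anticipate a genuine obstacle: this is a routine adaptation of Arzel\`a--Ascoli to the non-compact half-line. The only points requiring a little care are the bookkeeping in the diagonal extraction (simultaneously over the exhausting intervals $[0,R]$ and the derivative orders $j = 0, \dots, \ell$), and the clean splitting of the $C^\ell_{\delta_2}$-norm into a tail part, absorbed by the weight gap $\delta_1 - \delta_2 > 0$, and a compact part, absorbed by the uniform convergence of the derivatives up to order $\ell$.
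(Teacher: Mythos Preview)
Your proof is correct and follows precisely the approach the paper intends: the paper's own proof consists of the single sentence ``This is a straightforward consequence of the Arzel\`a--Ascoli theorem,'' and your argument is exactly the standard filling-in of that statement (local equicontinuity from the extra derivative, diagonal extraction, tail control via the weight gap $\delta_1>\delta_2$).
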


\begin{proof}
This is a straightforward consequence of the Arzel\`a-Ascoli theorem. 
\end{proof}

For the remainder of this section, we fix $\delta = \frac{2}{N}$ and consider the spaces 
$$
E  := \{ v \in C^2(\ov I)\::\: v(0)=0,\: v' \in C^1_{\delta}(I) \} \qquad \text{and}\qquad F:= C_{\delta}(I).
$$
As note above, $F$ is a Banach space with norm $\|\cdot\|_F = \|\cdot\|_{C_{\delta}}$. Moreover, for every $v \in E$ we have 
$$
|v(t)| \le \Bigl| \int_0^t v'(s)\,ds \Bigr| \le \|v'\|_{C^1_{\delta}} \int_0^t e^{-\frac{2s}{N}}\,ds \le \frac{N}{2} \|v'\|_{C^1_{\delta}} \qquad \text{for all $t \ge 0$}
$$ 
and therefore $\|v\|_{L^\infty(I)} \le \frac{N}{2} \|v'\|_{C^1_{\delta}}$. Hence we may endow $E$ with the norm 
$$
v \mapsto \|v\|_E  := \|v\|_{L^\infty(I)} + \|v'\|_{C^1_{\delta}}.
$$
Since $C^1_{\delta}$ is a Banach space, it easily follows that $E$ is a Banach space as well. We also note that
\begin{equation}
  \label{eq:limit-v}
\lim_{t \to \infty}v(t) = \int_0^\infty v'(s)\,ds \quad \text{exists for every $v \in E$.} 
\end{equation}

\begin{lemma}
\label{bounded-sol-E}
	Let $p>2$, $\gamma \in [0,\frac{N-2}{N}]$, and let $U \in C^2(\ov I)$ be a bounded nontrivial solution of \eqref{eq:U-gamma}. Then $U \in E$, and $\lim \limits_{t \to \infty}U(t) \not = 0$.
\end{lemma}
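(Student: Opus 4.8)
\textbf{Proof plan for Lemma~\ref{bounded-sol-E}.}

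The plan is to show first that $U$ has only finitely many zeros in $\overline I$, then extract the limit at infinity and its nonvanishing, and finally bootstrap decay estimates to place $U'$ (and $U''$) in the weighted space $C^1_\delta(I)$. To begin, I would exploit the concavity/convexity structure of \eqref{eq:U-gamma}: on any interval where $U>0$ the function $e^{-\gamma t}U'$ is strictly decreasing, and where $U<0$ it is strictly increasing. Combined with boundedness of $U$, this rules out infinitely many oscillations — a standard argument shows that between consecutive zeros the "height" of $U$ cannot shrink to zero while $U$ stays bounded, so $U$ has finitely many zeros, say the last one at $t_0 \ge 0$, and $U$ has a constant sign on $[t_0,\infty)$. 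On $[t_0,\infty)$ the monotonicity of $e^{-\gamma t}U'$ together with boundedness of $U$ forces $e^{-\gamma t}U'(t)$ to converge to some limit $L$ as $t \to \infty$; since $\int_{t_0}^\infty e^{\gamma t}(e^{-\gamma s}U'(s))$ would diverge if $L \ne 0$ (using $\gamma \ge 0$, so $e^{\gamma t} \ge 1$), boundedness of $U$ forces $L=0$, hence $U'(t) \to 0$. Then $\ell:= \lim_{t\to\infty}U(t)$ exists; if $\ell = 0$ then $U \equiv 0$ past $t_0$ by the same ODE-uniqueness reasoning used in the proof of Proposition~\ref{sec:preliminaries-limit-problem-1} (one can also integrate the equation twice and get a contradiction with $U$ nontrivial), so $\ell \ne 0$.

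Having $\ell \ne 0$, the right-hand side $e^{-t}|U|^{p-2}U$ of \eqref{eq:U-gamma} decays like $C e^{-t}$, and I would convert this into decay of $U'$. Integrating $-(e^{-\gamma t}U')' = e^{-t}|U|^{p-2}U$ from $t$ to $\infty$ (using $e^{-\gamma t}U' \to 0$) gives
\begin{equation*}
e^{-\gamma t}U'(t) = \int_t^\infty e^{-s}|U(s)|^{p-2}U(s)\, ds,
\end{equation*}
so $|U'(t)| \le e^{\gamma t}\int_t^\infty e^{-s}|U(s)|^{p-2}|U(s)|\,ds \le C\, e^{\gamma t} e^{-t} = C e^{-(1-\gamma)t}$. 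Since $\gamma \le \frac{N-2}{N}$ we have $1-\gamma \ge \frac 2N = \delta$, hence $\|U'\|_{C_\delta} < \infty$. For the second derivative I would use the equation in the form $U'' = \gamma U' - e^{(\gamma-1)t}|U|^{p-2}U$ (after expanding $(e^{-\gamma t}U')'= e^{-\gamma t}(U'' - \gamma U')$); both terms on the right are $O(e^{-\delta t})$ — the first because $U' \in C_\delta$, the second because $e^{(\gamma-1)t} \le e^{-\delta t}$ and $|U|^{p-2}U$ is bounded — so $U'' \in C_\delta(I)$ as well. Together with $U(0)=0$ this gives $U \in E$.

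The main obstacle is the first step: ruling out infinitely many zeros of a merely bounded solution and, relatedly, showing $\ell(U)\ne 0$. The concavity argument handles this cleanly once one observes that on a positivity interval $[a,b]$ with $U(a)=U(b)=0$, concavity of $U$ forces $\max_{[a,b]} U \ge$ a definite fraction of $|U'(a)|(b-a)$, while the decay of $e^{-s}|U|^{p-2}U$ controls the total variation of $e^{-\gamma s}U'$; a uniform lower bound on the interval lengths (or on the heights) then follows, contradicting boundedness if there were infinitely many such intervals. I expect this to require a careful but routine estimate rather than a new idea, and everything afterward is bootstrapping. One should also double-check the edge case $t_0 = 0$ (i.e. $U$ has no zero in $(0,\infty)$), which is the generic situation for $K=1$ and causes no difficulty.
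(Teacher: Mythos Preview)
Your approach works but takes an unnecessary detour, and one step is shaky as written. The paper reverses the order and bypasses zero-counting entirely.

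\emph{Membership in $E$.} The paper simply observes that any bounded $C^1$ function admits a sequence $t_n\to\infty$ with $U'(t_n)\to 0$. Integrating $(e^{-\gamma t}U')'=-e^{-t}|U|^{p-2}U$ from $t$ to $t_n$ and passing to the limit gives $e^{-\gamma t}|U'(t)|\le Ce^{-t}$ directly, hence $|U'(t)|\le Ce^{(\gamma-1)t}\le Ce^{-2t/N}$, with no hypothesis whatsoever on the zero set. Your bounds on $U'$ and $U''$ from that point coincide with the paper's. So what you flag as the ``main obstacle''---ruling out infinitely many zeros---is simply not needed for this lemma.

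\emph{Nonzero limit.} Having $U\in E$, the paper sets $m(t)=\sup_{s\ge t}|U(s)|$ and from the integrated equation obtains $|U(t)|\le \frac{e^{(\gamma-1)t}}{1-\gamma}\,m(t)^{p-1}$, hence $m(t)\le \frac{e^{(\gamma-1)t}}{1-\gamma}\,m(t)^{p-1}$, which forces $m(t)=0$ or $m(t)^{p-2}\ge 1-\gamma$; continuity and $m(0)>0$ give the latter for all $t$. Your proposed route ``$\ell=0\Rightarrow U\equiv 0$ past $t_0$ by the ODE-uniqueness reasoning of Proposition~\ref{sec:preliminaries-limit-problem-1}'' is not safe: that proposition invokes Naito's uniqueness-at-infinity for the case $\gamma=0$ only, and more fundamentally $|U|^{p-2}U$ is not Lipschitz at $0$ for $p\in(2,3]$, so uniqueness through a point with $U=U'=0$ is not automatic. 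There is, however, a one-line fix inside your own setup: once $e^{-\gamma t}U'$ is strictly monotone on $(t_0,\infty)$ with limit $0$, it has a fixed sign, so $U$ is strictly monotone there, and $\ell\ne 0$ follows immediately from $U(t_0)=0$ and $U\not\equiv 0$ on $(t_0,\infty)$. Your parenthetical ``integrate twice'' alternative is essentially the paper's $m(t)$ argument.
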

\begin{proof}
	Since $U$ is bounded, we have
	$$
	|(e^{-\gamma t}U')'| \leq e^{-t} |U|^{p-1} \leq C e^{-t} \qquad \text{for $t \ge 0$}
	$$
	with a constant $C>0$. Furthermore, there exists a sequence $t_n \to \infty$ with $U'(t_n) \to 0$ as $n \to \infty$. Consequently, 	$$
	e^{-\gamma t} |U'(t)| = \lim_{n \to \infty} \left| \int_t^{t_n} (e^{-\gamma s}U'(s))' \, ds \right| \leq \lim_{n \to \infty} C \int_t^{t_n} e^{- s} \, ds = C e^{-t} 
	$$
	and therefore $|U'(t)|  \leq C e^{(\gamma -1)t} \leq C e^{-\frac{2}{N} t}$ for $t \ge 0$. Since we can write \eqref{eq:U-gamma} as
	\begin{equation} \label{eq:U-gamma-alternate}
	-U'' + \gamma U' = e^{(\gamma-1)t} |U|^{p-2} U ,
	\end{equation}	
	it follows that
	$|U''(t)| \leq |\gamma| |U'(t)| + e^{(\gamma-1)t} |U(t)|^{p-1} \leq C' e^{-\frac{2}{N} t}$ for $t \ge 0$ with a constant $C'>0$, hence $U \in E$.

It remains to show that $\lim \limits_{t \to \infty}U(t) \not = 0$. For this we consider the nonincreasing function $m(t):= \sup \limits_{s \ge t}|U(s)|$. Using (\ref{eq:U-gamma}) and the fact that $U \in E$, we find that 
$$
e^{-\gamma t}|U'(t)| = \Bigl|\int_t^\infty e^{-s}|U(s)|^{p-2}U(s)\,ds\Bigr|
\le e^{-t} m^{p-1}(t) \qquad \text{for $t \ge 0$.}
$$
and therefore
$$
|U(t)| = \Bigl| \int_t^\infty U'(s)\,ds \Bigr| \le \int_{t}^\infty 
e^{(\gamma-1)s} m^{p-1}(s)\,ds \le \frac{m^{p-1}(t)}{1-\gamma}e^{(\gamma-1)t}\qquad \text{for $t \ge 0$.}  
$$
Consequently, 
$$
m(t) =  \sup_{s \ge t}|U(s)| \le \sup_{s \ge t}\Bigl(\frac{m^{p-1}(s)}{1-\gamma}e^{(\gamma-1)s}\Bigr) 
= \frac{m^{p-1}(t)}{1-\gamma}e^{(\gamma-1)t} 
$$
and hence $m(t)=0$ or $m^{p-2}(t) \ge (1-\gamma)e^{(1-\gamma)t} \ge 1-\gamma$ for $t \ge 0$. Since $m(0)\not = 0$ as $U \not \equiv 0$, 
we conclude by continuity of $m$ that $m^{p-2}(t)\ge 1-\gamma$ for all $t \ge 0$. Together with \eqref{eq:limit-v}, this shows that $\lim \limits_{t \to \infty}U(t) \not = 0$.
\end{proof}

We intend to use the implicit function theorem to show that $U_\gamma \to U_0$ in $E$ as $\gamma \to 0$. This requires uniqueness and nondegeneracy properties as given in the following two lemmas.

\begin{lemma} \label{uniqueness via zeros}
	Let $p>2$, $\gamma \in (0,\frac{N-2}{N+\alpha_p})$ and let $\tilde U \in E$ be a solution of \eqref{eq:U-gamma} with precisely $K-1$ zeros in $(0,\infty)$
	and $\lim \limits_{t \to \infty}\tilde U(t)>0$.
	Then $\tilde U = U_\gamma$. 
\end{lemma}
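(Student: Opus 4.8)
The plan is to show that the map $U_\alpha$ (which we have rewritten as $U_\gamma$) is the *unique* solution of \eqref{eq:U-gamma} in $E$ with precisely $K-1$ zeros in $(0,\infty)$ and $\lim_{t\to\infty}\tilde U(t)>0$, by going back to the original radial variable and invoking the uniqueness statement of Theorem~\ref{sec:bifurc-nonr-solut-1}. First I would observe that the transformation $t \mapsto r = e^{-\frac{t}{N+\alpha}}$ is a bijection between $[0,\infty)$ and $(0,1]$, sending $t=0$ to $r=1$ and $t=\infty$ to $r=0$. Given $\tilde U \in E$ solving \eqref{eq:U-gamma} with $\gamma = \gamma(\alpha) = \frac{N-2}{N+\alpha}$, I would set
$$
\tilde u(r) := (N+\alpha)^{\frac{2}{p-2}}\,\tilde U\bigl(-(N+\alpha)\ln r\bigr), \qquad r \in (0,1],
$$
so that by construction $U_\gamma$ corresponds to $u_\alpha$ under this correspondence. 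A direct computation (reversing the one already performed in the passage from \eqref{eq:u-alpha-radial-variable-1} to \eqref{eq:U-gamma}) shows that $\tilde u$ solves the radial equation \eqref{eq:u-alpha-radial-variable-1} on $(0,1)$ with $\tilde u(1) = \tilde U(0) = 0$.

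The key points to check are then: (a) that $\tilde u$ extends to a $C^2$ function on $\overline{\B}$ with $\tilde u'(0) = 0$; (b) that $\tilde u$ has precisely $K-1$ zeros in $(0,1)$ and hence precisely $K$ nodal domains; and (c) that $\tilde u(0) > 0$. For (b), the number of zeros is preserved since $t \mapsto r$ is a strictly decreasing bijection of the open intervals. For (c), since $\tilde U \in E$ we have $\lim_{t\to\infty}\tilde U(t) = \lim_{r\to 0^+}\tilde u(r)/(N+\alpha)^{\frac{2}{p-2}}$, which is positive by hypothesis; this also shows $\tilde u$ has a finite positive limit at the origin. For (a), I would use that $\tilde U \in E$ gives the decay $|\tilde U'(t)| \le C e^{-\frac{2}{N}t}$ and $|\tilde U''(t)| \le C e^{-\frac{2}{N}t}$, so in the $r$ variable $|\tilde u'(r)| \le C (N+\alpha) r^{\frac{2(N+\alpha)}{N(N+\alpha)}-1}\cdot(\text{const})$; more carefully, $\frac{d\tilde u}{dr} = -\frac{N+\alpha}{r}(N+\alpha)^{\frac{2}{p-2}}\tilde U'(-(N+\alpha)\ln r)$, and since $\tilde U'(t) = O(e^{-\frac{2}{N}t})$ we get $\tilde u'(r) = O(r^{\frac{2(N+\alpha)}{N}-1})$. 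Because $\frac{2(N+\alpha)}{N} > 2$ for $\alpha > \alpha_p \ge 0$ when $N \ge 3$ (indeed $\frac{2(N+\alpha)}{N} - 1 > 1$), this forces $\tilde u'(r) \to 0$ as $r \to 0$, and a similar estimate on the second derivative together with the equation \eqref{eq:u-alpha-radial-variable-1} gives $C^2$ regularity up to $r=0$; standard elliptic regularity then upgrades $\tilde u$ to an element of $C^2(\overline{\B})$.

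Having established that $\tilde u$ is a radial solution of \eqref{1.4} with precisely $K$ nodal domains and $\tilde u(0) > 0$, the uniqueness part of Theorem~\ref{sec:bifurc-nonr-solut-1} forces $\tilde u = u_\alpha$, and transforming back yields $\tilde U = U_\gamma$. The main obstacle I anticipate is the regularity bookkeeping in step (a): one must verify that the exponential decay encoded in membership of $E$ translates into enough vanishing of $\tilde u$ and its derivatives at the origin to conclude $\tilde u \in C^2(\overline{\B})$ rather than merely $C^2((0,1])$, and that the constant $\alpha_p$ (equivalently, the upper bound on $\gamma$ in the hypothesis) is exactly what guarantees the relevant exponents are subcritical so that the reverse transformation lands in the right space. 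Everything else is a routine change of variables combined with the already-quoted uniqueness theorem.
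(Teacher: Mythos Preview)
Your proposal is correct and follows essentially the same route as the paper: define $\tilde u(r)=(N+\alpha)^{2/(p-2)}\tilde U(-(N+\alpha)\ln r)$, verify it solves \eqref{eq:u-alpha-radial-variable-1}, use $\tilde U\in E$ (i.e.\ $\tilde U'=O(e^{-2t/N})$) together with $\frac{2}{N+\alpha}<\frac{2}{N}$ to get the needed regularity and boundary behaviour at $r=0$, then invoke Theorem~\ref{sec:bifurc-nonr-solut-1}. The paper phrases the decay step as $\lim_{r\to 0}\frac{u'(r)}{r}=0$ and then reads off the existence of $\lim_{r\to 0}u''(r)$ directly from the ODE, which is exactly your step~(a); your appeal to ``standard elliptic regularity'' is not really needed once the radial $C^2$-extension is in hand.
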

\begin{proof}
Let $\alpha>0$ be the unique value such that $\gamma = \gamma(\alpha)= \frac{N-2}{N+\alpha}$, and consider the function 
$$
u:[0,1] \to \R, \qquad 
u(r)= \left \{
  \begin{aligned}
&(N+\alpha)^\frac{2}{p-2} \tilde U(-(N+\alpha)\ln r),&&\qquad r>0,\\
&(N+\alpha)^\frac{2}{p-2} \lim \limits_{t \to \infty}\tilde U(t),&&\qquad r=0. 
  \end{aligned}
\right.
$$
Since $\tilde U \in E$, the latter limit exists. 
We then have $u \in C^2((0,1]) \cap C([0,1])$, and $u$ solves equation~\eqref{eq:u-alpha-radial-variable-1} on $(0,1)$. Moreover, we have  
$u'(r) = - (N+\alpha)^\frac{p}{p-2}\frac{\tilde U'(-(N+\alpha)\ln r)}{r}$ for $r \in (0,1]$ and therefore
$$
\lim_{r \to 0}\frac{u'(r)}{r} = - (N+\alpha)^\frac{2}{p-2}\lim_{t \to \infty} 
e^{\frac{2t}{N+\alpha}}  \tilde U'(t).
$$
Since $\frac{2}{N+\alpha}< \frac{2}{N}$ and $\tilde U \in E$, we deduce that $\lim \limits_{r \to 0}\frac{u'(r)}{r}=0$. From equation~\eqref{eq:u-alpha-radial-variable-1} it then also follows that $\lim \limits_{r \to 0}u''(r)$ exists, and that $u$ also satisfies the boundary conditions in~\eqref{eq:u-alpha-radial-variable-1}. Moreover, we have $u(0)>0$ since $\lim \limits_{t \to \infty}\tilde U(t)>0$ by assumption. The uniqueness result in Theorem \ref{sec:bifurc-nonr-solut-1} then yields that $u$ is equal to $u_\alpha$. Transforming back, we conclude that $\tilde U= U_\gamma$.
\end{proof}

\begin{lemma}
\label{sec:preliminaries-limit-problem-2}
Let $p>2$ and $\gamma \in [0,\frac{N-2}{N+\alpha_p})$. Then the solution $U_\gamma$ of problem (\ref{eq:U-gamma}) is nondegenerate in the sense that the equation 
$$
-(e^{-\gamma t} v')' -(p-1)e^{-t}|U_\gamma|^{p-2}v = 0 \quad \text{in $[0,\infty)$,}\qquad v(0)=0.
$$
has no bounded nontrivial solution.
\end{lemma}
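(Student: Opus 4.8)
The plan is to distinguish the two ranges $\gamma \in (0,\frac{N-2}{N+\alpha_p})$ and $\gamma = 0$. In the first range the weight $e^{-\gamma t}$ is nondegenerate, so the transformation of Section~\ref{sec:limit-alpha-to} can be undone and the claim reduced to the radial nondegeneracy of $u_\alpha$ already recorded in Theorem~\ref{sec:bifurc-nonr-solut-1}(i). At $\gamma = 0$ there is no ball to transform back to, so one argues directly: exhibit an explicit bounded element of the kernel of the linearized operator coming from the scaling invariance of the limit equation~\eqref{eq:limit-U-0-1}, and then show that the space of bounded solutions is one-dimensional.

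Consider first $\gamma \in (0,\frac{N-2}{N+\alpha_p})$ and suppose $v$ is a bounded solution of the linearized equation. Arguing exactly as in the proof of Lemma~\ref{bounded-sol-E}, boundedness of $v$ and of $U_\gamma$ gives $|(e^{-\gamma t}v')'| \le C e^{-t}$, hence $e^{-\gamma t}v'(t)$ has a limit as $t \to \infty$; this limit must vanish, since otherwise $v$ would be unbounded, and therefore $|v'(t)| \le C e^{(\gamma-1)t} \le C e^{-\frac{2}{N}t}$. Writing the equation as $-v'' + \gamma v' = (p-1)e^{(\gamma-1)t}|U_\gamma|^{p-2}v$ then yields $|v''(t)| \le C e^{-\frac{2}{N}t}$ as well, so $v \in E$. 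Letting $\alpha > \alpha_p$ be determined by $\gamma = \frac{N-2}{N+\alpha}$ and setting $\phi(r) := v(-(N+\alpha)\ln r)$, the computation in the proof of Lemma~\ref{uniqueness via zeros} shows that $\phi$ extends to a function in $C^2([0,1])$ with $\phi'(0) = \phi(1) = 0$, and that, viewed as a radial function on $\B$, it belongs to $H^2(\B) \cap H^1_0(\B)$ and satisfies $L^\alpha \phi = 0$. If $v \not\equiv 0$ this contradicts the radial nondegeneracy in Theorem~\ref{sec:bifurc-nonr-solut-1}(i); hence $v \equiv 0$.

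Now let $\gamma = 0$. For $\kappa > 0$ the function $t \mapsto \kappa\, U_0(t-(p-2)\ln\kappa)$ again solves $-U'' = e^{-t}|U|^{p-2}U$; differentiating this family in $\kappa$ at $\kappa = 1$ — equivalently, by direct differentiation of the equation, using that $U_0 \in C^3$ because $U_0'' = -e^{-t}|U_0|^{p-2}U_0$ has a $C^1$ right-hand side and that $s \mapsto |s|^{p-2}s$ is $C^1$ for $p>2$ — one finds that
$$
w := U_0 - (p-2)U_0'
$$
solves $-w'' - (p-1)e^{-t}|U_0|^{p-2}w = 0$ on $[0,\infty)$. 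Since $U_0$ and $U_0'$ are bounded, $w$ is bounded, while $w(0) = -(p-2)U_0'(0) \ne 0$ because $p>2$ and $U_0'(0) = (-1)^{K-1}U_\infty'(0) \ne 0$ by Proposition~\ref{sec:preliminaries-limit-problem-1}. It then remains to show that every bounded solution is a scalar multiple of $w$. If $v_1,v_2$ are bounded solutions, then $|v_i''(t)| \le C e^{-t}$, so $v_i'(t) \to 0$ and $v_i(t) \to \ell_i \in \R$ as $t \to \infty$; since the equation has no first-order term, the Wronskian $v_1v_2' - v_1'v_2$ is constant, and letting $t \to \infty$ shows this constant equals $\ell_1\cdot 0 - 0\cdot \ell_2 = 0$, so $v_1$ and $v_2$ are linearly dependent. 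Consequently any bounded solution $v$ with $v(0)=0$ satisfies $v = cw$ with $c\,w(0) = 0$, forcing $c = 0$ and $v \equiv 0$.

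The main obstacle is the case $\gamma = 0$, where the reduction to the ball is unavailable; the decisive point is the construction of the bounded kernel element $w$ with $w(0) \ne 0$ from the scaling symmetry, after which the one-dimensionality of the bounded solution space is a routine Wronskian-and-decay argument. The only other point requiring care is checking that a bounded solution of the linearized equation for $\gamma > 0$ actually lies in $E$, so that the back-transformation produces an admissible function on $\B$; this is a direct adaptation of the estimates already carried out in Lemma~\ref{bounded-sol-E}.
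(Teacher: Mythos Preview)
Your proof is correct. For $\gamma=0$ it is essentially the paper's argument: the paper uses the auxiliary solution $w = U_\gamma' + \frac{\gamma-1}{p-2}U_\gamma$, which at $\gamma=0$ is a nonzero scalar multiple of your $U_0-(p-2)U_0'$, and then runs an integration-by-parts identity that is exactly the weighted Wronskian computation you wrote out. Your version is in fact slightly cleaner, since you just observe that $v_1v_2'-v_1'v_2$ is constant and evaluate the limit at infinity.

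The genuine difference is in the range $\gamma>0$. The paper does \emph{not} split cases: the same auxiliary function $w = U_\gamma' + \frac{\gamma-1}{p-2}U_\gamma$ solves the linearized equation for every $\gamma$, satisfies $w(0)=U_\gamma'(0)\ne 0$, and the Wronskian argument (in its weighted form, with $e^{-\gamma t}(v_1v_2'-v_1'v_2)$ constant) goes through uniformly. You instead undo the Byeon--Wang change of variables, land back on $\B$, and invoke the radial nondegeneracy of $u_\alpha$ from Theorem~\ref{sec:bifurc-nonr-solut-1}(i). This is legitimate --- your decay estimates give $v\in E$, so the computation of Lemma~\ref{uniqueness via zeros} indeed produces a $C^2$ radial function $\phi$ on $\ov\B$ with $\phi'(0)=\phi(1)=0$ and $L^\alpha\phi=0$ --- but it makes the argument two-tiered and relies on the black box of Theorem~\ref{sec:bifurc-nonr-solut-1}(i). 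The paper's route is self-contained and uniform in $\gamma$; yours trades that uniformity for the convenience of citing an existing result.
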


\begin{proof}
We consider the auxiliary function $w:= U_\gamma' + \frac{\gamma-1}{p-2}U_\gamma$, which, by direct computation, solves the linearized equation 
\begin{equation}
  \label{eq:nondeg-1}
-(e^{-\gamma t} w')' -(p-1)e^{-t}|U_\gamma|^{p-2}w = 0 \qquad \text{in $[0,\infty)$.}
\end{equation}
Moreover, we have $\lim \limits_{t \to \infty}w'(t)=0$ since $U_\gamma \in E$ by Lemma~\ref{bounded-sol-E}. Suppose by contradiction there exists a bounded function $v \in C^2([0,\infty))$, $v \not \equiv 0$ satisfying 
\begin{equation}
  \label{eq:nondeg-2}  
-(e^{-\gamma t}v')' -(p-1)e^{-t}|U_\infty|^{p-2}v = 0 \quad \text{in $[0,\infty)$,}\qquad v(0)=0.
\end{equation}
Sturm comparison with $w$ yields that $v$ can only have finitely many zeros in $I$. Let $t_0>0$ denote the largest zero of $w$ in $[0,\infty)$. Since $v$ is bounded, there exists a sequence $(t_n)_n \subset [t_0,\infty)$ such that 
$t_n \to \infty$ and $v'(t_n) \to 0$ as $n \to \infty$. From (\ref{eq:nondeg-1}) and (\ref{eq:nondeg-2}), we deduce that 
$$
-\int_{t_0}^\infty (e^{-\gamma t}v')' w = \int_{t_0}^\infty e^{-t}|U_\infty|^{p-2} vw = - \int_{t_0}^\infty (e^{-\gamma t} w')' v .
$$
Since $\lim \limits_{n \to \infty} e^{-\gamma t_n} v'(t_n)=\lim \limits_{n \to \infty} e^{-\gamma t_n} w'(t_n)=0$, integration by parts yields
\begin{align*}
-e^{-\gamma t_0} v'(t_0)w(t_0) &= 
\lim_{n \to \infty} e^{-\gamma t_n} v'(t_n)w(t_n) - e^{-\gamma t_0} v'(t_0)w(t_0)\\
&= \lim_{n \to \infty} e^{-\gamma t_n} w'(t_n)v(t_n) - e^{-\gamma t_0} w'(t_0)v(t_0) = 0 ,
\end{align*}
which implies $v'(t_0)=0$ or $w(t_0)=0$. In the first case we then have $v \equiv 0$ and the proof is finished. In the other case it also follows that there exists $c \neq 0$ such that $cw'(t_0) = v'(t_0)$, which implies $v \equiv cw$. This contradicts $v(0)=0 \neq U_\infty'(0) = w(0)$.
\end{proof}

We may now state a continuation result for the map $\gamma \mapsto U_\gamma$ which in particular implies Proposition~\ref{limit-shape}.

\begin{proposition} \label{implicit function for U-gamma}
	Let $p>2$. There exists $\eps_0 >0$ such that the map $(0,\frac{N-2}{N+\alpha_p}) \to E$, $\gamma \mapsto U_\gamma$ extends to a $C^1$-map 
$g: (-\eps_0,\frac{N-2}{N+\alpha_p}) \to E$ with $g(0)=U_0$.
\end{proposition}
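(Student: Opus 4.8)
The plan is to apply the implicit function theorem to a parameter-dependent map encoding problem \eqref{eq:U-gamma}. Fix $\eps_0 \in (0,\tfrac2N)$ (to be determined) and, using the equivalent form \eqref{eq:U-gamma-alternate}, define
$$
\Phi : \Bigl(-\eps_0,\tfrac{N-2}{N+\alpha_p}\Bigr)\times E \to F,\qquad
\Phi(\gamma,U) := -U'' + \gamma U' - e^{(\gamma-1)t}|U|^{p-2}U .
$$
This is well defined: for $U\in E$ one has $U''\in C_\delta(I)=F$ and $U'\in C_\delta(I)$, and since $U$ is bounded and $1-\gamma>\delta=\tfrac2N$ on the parameter interval, the last term lies in $F$ as well. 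For $\gamma\ge 0$ the zeros of $\Phi(\gamma,\cdot)$ in $E$ are exactly the bounded solutions $U$ of \eqref{eq:U-gamma} with $U(0)=0$; in particular $\Phi(\gamma,U_\gamma)=0$ for $\gamma\in[0,\tfrac{N-2}{N+\alpha_p})$ by Lemma~\ref{bounded-sol-E}, and $\Phi(0,U_0)=0$ because $U_0=(-1)^{K-1}U_\infty$ solves \eqref{eq:limit-U-0-1}. A direct computation shows $\Phi\in C^1$; the only nonroutine point is that the Nemytskii operator $U\mapsto |U|^{p-2}U$ is continuously differentiable from $C(\ov I)$ to $C(\ov I)$ with derivative $v\mapsto (p-1)|U|^{p-2}v$, which holds since $s\mapsto |s|^{p-2}s$ is of class $C^1$ on $\R$ for $p>2$. (The genuinely delicate differentiability issue, namely that of $U\mapsto|U|^{p-2}$ without the extra factor, does not arise here; it is the subject of Section~\ref{sec:differentiability-g}.)

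Next I would show that for every $\gamma\in[0,\tfrac{N-2}{N+\alpha_p})$ the partial differential
$$
D_U\Phi(\gamma,U_\gamma):E\to F,\qquad v\mapsto -v''+\gamma v' - (p-1)e^{(\gamma-1)t}|U_\gamma|^{p-2}v
$$
is an isomorphism. This is precisely where the choice of the weighted spaces $E$ and $F$ pays off. Using the integrating factor $e^{-\gamma t}$ one checks that $v\mapsto -v''+\gamma v'$ is an isomorphism $E\to F$: given $f\in F$, the unique $v\in E$ with $v(0)=0$ is obtained by integrating $(e^{-\gamma t}v')'=-e^{-\gamma t}f$ from $+\infty$, which is legitimate because $\gamma+\delta>0$, and one verifies $v\in E$ from $|f|\le Ce^{-\delta t}$. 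On the other hand, $v\mapsto (p-1)e^{(\gamma-1)t}|U_\gamma|^{p-2}v$ is a compact operator $E\to F$: it is multiplication by a bounded function that decays like $e^{(\gamma-1)t}$ with $\gamma-1<-\delta$, so by Arzel\`a-Ascoli it sends $E$-bounded sequences (equibounded in $C^1_\loc$) to sequences converging in $F$. Hence $D_U\Phi(\gamma,U_\gamma)$ is Fredholm of index zero, and its kernel is trivial by Lemma~\ref{sec:preliminaries-limit-problem-2} (the kernel equation, after multiplying by $e^{-\gamma t}$, is exactly the one in that lemma); therefore it is an isomorphism.

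With these two steps the implicit function theorem applies at $(0,U_0)$ and yields $\eps_0>0$ and a $C^1$-map $\tilde g:(-\eps_0,\eps_0)\to E$ with $\tilde g(0)=U_0$, $\Phi(\gamma,\tilde g(\gamma))=0$, and $\tilde g(\gamma)$ the unique zero of $\Phi(\gamma,\cdot)$ near $U_0$; likewise, applying it at each $(\gamma_*,U_{\gamma_*})$, $\gamma_*\in(0,\tfrac{N-2}{N+\alpha_p})$, gives a local $C^1$-branch of zeros of $\Phi$ through $(\gamma_*,U_{\gamma_*})$. It remains to identify these branches with $\gamma\mapsto U_\gamma$. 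Here I would use that $U_0$ and every $U_{\gamma_*}$ have exactly $K-1$ zeros in $(0,\infty)$, all simple (a common zero of $U$ and $U'$ would force $U\equiv0$), and a nonzero limit at $+\infty$ (by \eqref{eq:lim-infty-U-0}, resp.\ Lemma~\ref{bounded-sol-E}). Since convergence in $E$ implies uniform convergence on $I$ together with convergence of the limit at $+\infty$ (by \eqref{eq:limit-v}), a standard stability argument shows that along each branch the solutions keep exactly $K-1$ zeros in $(0,\infty)$ and a positive limit for the parameter close to its base value, and then Lemma~\ref{uniqueness via zeros} forces them to coincide with $U_\gamma$. Consequently $\gamma\mapsto U_\gamma$ is of class $C^1$ on $(0,\tfrac{N-2}{N+\alpha_p})$ and agrees with $\tilde g$ on $(0,\eps_0)$, so the two overlap into a single $C^1$-map $g:(-\eps_0,\tfrac{N-2}{N+\alpha_p})\to E$ extending $\gamma\mapsto U_\gamma$ with $g(0)=U_0$.

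I expect the main obstacle to be Step~2 — concretely the surjectivity of $D_U\Phi(0,U_0)$, i.e.\ solvability of the linearized equation in the space $E$ — which is exactly why the weighted spaces $E$ and $F$ with the precise rate $\delta=\tfrac2N$ were set up: this rate is what makes $-\partial_t^2+\gamma\partial_t$ invertible $E\to F$ and the zeroth-order term a compact perturbation, so that the nondegeneracy of Lemma~\ref{sec:preliminaries-limit-problem-2} can be turned into invertibility. Finally, Proposition~\ref{limit-shape} follows at once: since $\gamma(\alpha)=\tfrac{N-2}{N+\alpha}\to 0^+$ as $\alpha\to\infty$ and $U_\alpha=g(\gamma(\alpha))$, continuity of $g$ at $0$ gives $U_\alpha\to U_0=(-1)^{K-1}U_\infty$ in $E$, hence uniformly on $[0,\infty)$.
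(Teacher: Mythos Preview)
Your proposal is correct and follows essentially the same approach as the paper: define the map $\Phi(\gamma,U)=-U''+\gamma U'-e^{(\gamma-1)t}|U|^{p-2}U$ from $(-\eps_0,\tfrac{N-2}{N+\alpha_p})\times E$ to $F$, show $D_U\Phi(\gamma,U_\gamma)$ is an isomorphism by writing it as (isomorphism $-\partial_t^2+\gamma\partial_t$) plus (compact multiplication operator) and invoking Lemma~\ref{sec:preliminaries-limit-problem-2} for injectivity, then apply the implicit function theorem at $(0,U_0)$ and at each $(\gamma_*,U_{\gamma_*})$, and finally identify the local branches with $U_\gamma$ via stability of the zero count and Lemma~\ref{uniqueness via zeros}. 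The paper's argument for the last step is slightly more explicit---it bounds the largest zero $t_\gamma$ of $\tilde g(\gamma)$ in terms of the (continuous, nonzero by Lemma~\ref{bounded-sol-E}) limit $m_\gamma=\lim_{t\to\infty}\tilde g(\gamma)(t)$ to prevent zeros escaping to infinity---but your ``standard stability argument'' amounts to the same thing.
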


\begin{proof}
We consider the map
$$
G: \left(-\infty,\frac{N-2}{N+\alpha_p}\right) \times E  \to F,\qquad G(\gamma,U)= -U'' + \gamma U' - e^{(\gamma-1)t} |U|^{p-2} U.
$$
Since $e^{(\gamma-1)t} \le e^{-\frac{2}{N}t}$ for $\gamma<\frac{N-2}{N+\alpha_p}$, $G$ is well-defined and of class $C^1$. Moreover, by definition of $U_\gamma$ we have 
\begin{equation}
  \label{eq:G-first-equality}
G(\gamma,U_\gamma) =0  \qquad \text{for $\gamma \in \left[0,\frac{N-2}{N+\alpha_p}\right)$.}
\end{equation}
We first show that the linear map 
\begin{equation}
\label{L-gamma-isomorphism}  
L_\gamma :=d_U G(\gamma,U_\gamma): E  \to F,\qquad L \phi= -\phi'' + \gamma \phi' - (p-1)e^{(\gamma-1)t}|U_\gamma|^{p-2} \phi 
\end{equation}
is an isomorphism for $\gamma \in [0,\frac{N-2}{N+\alpha_p})$. For this, we first note that 
\begin{equation}
 \label{sec-3-isomorphism}
\text{the map $E  \to F$, $\phi \mapsto -\phi'' +\gamma \phi'$ is an isomorphism.} 
\end{equation}
Indeed, if $\phi \in E$ satisfies $-\phi''+ \gamma \phi' =0$, then $-\phi'+ \gamma \phi$ is constant and $\phi(0)=0$, hence   
$\phi(t)= c(e^{\gamma t}-1)$ for $t \in I$ with a constant $c \in \R$. Since $\phi \in E \subset L^\infty(I)$, we conclude that $\phi \equiv 0$.
 
Moreover, if $f \in F$ is given and $\phi: I \to \R$ is defined by 
	$$
	\phi(t):=  \int_0^t \int_s^\infty e^{\gamma(s-\sigma)}f(\sigma) \, d\sigma ds ,
	$$
	we have $-\phi''+ \gamma \phi'=f$ and $\phi(0)=0$. Furthermore, 
$$
|\phi'(t)|=  \Bigl|\int_t^\infty e^{\gamma(t-\sigma)}f(\sigma) \, d\sigma\Bigr|  \leq \int_t^\infty |f(\sigma)| \, d\sigma \leq \|f\|_F 
\int_t^\infty e^{-\frac{2}{N} s} \, ds \leq  \frac{N}{2} \|f\|_F\: e^{-\frac{2}{N} t}
$$ 
for $t \ge 0$ and therefore $\phi \in E$. We thus infer (\ref{sec-3-isomorphism}).
 
Next, we note that the linear map $E \to F$, $\phi \mapsto e^{(\gamma-1)(\cdot)} |U_0|^{p-2} \phi$ is compact, since the embedding $E \hookrightarrow C_0(I)$ is compact by Lemma~\ref{compactness-C-delta-spaces} and the map 
$C_0(I) \to F$, $\phi \mapsto e^{(\gamma-1)(\cdot)} |U_0|^{p-2} \phi$ is continuous. By \eqref{sec-3-isomorphism}, we therefore deduce that $L$ is Fredholm  of index zero. Since the equation $L_\gamma v=0$ only has the trivial solution $v=0$ in $E$ by Lemma~\ref{sec:preliminaries-limit-problem-2}, we conclude that $L_\gamma$ is an isomorphism, as claimed.
We now apply the implicit function theorem to the map $G$ in the point $(0,U_0)$. This yields $\eps_0>0$ and a differentiable map ${\tilde g}:(-\eps_0,\eps_0) \to E$ with ${\tilde g}(0)=U_0$ and $G(\gamma,{\tilde g}(\gamma)) = 0$ for $\gamma \in (-\eps_0,\eps_0)$.\\
Next we claim that 
\begin{equation}
\label{convergence in E-prop}
\text{$U_\gamma = {\tilde g}(\gamma)$ for $\gamma \in [0,\eps_0)$.}  
\end{equation}
Indeed, let $v_\gamma: = {\tilde g}(\gamma) \in E$ for $\gamma \in (-\eps_0,\eps_0)$. By the continuity of ${\tilde g}: (-\eps_0,\eps_0) \to E$ and (\ref{eq:limit-v}), the function 
$$
(-\eps_0,\eps_0) \to \R, \qquad \gamma \mapsto  m_\gamma:= \lim_{t \to \infty}v_\gamma(t)
$$
is also continuous, and it is nonzero for $\gamma \in [0,\eps_0)$ by Lemma~\ref{bounded-sol-E}. Moreover, by construction we have $v_0 = U_0$ and therefore $m_0 > 0$. It then follows that 
\begin{equation}
  \label{eq:m-gamma-pos}
m_\gamma >0 \qquad \text{for all 
$[0,\eps_0)$.} 
\end{equation}
By Lemma~\ref{uniqueness via zeros}, we thus only need to prove that $v_\gamma$ has $K-1$ zeros in $(0,\infty)$ for $\gamma \in [0,\eps_0)$. This is true for $\gamma= 0$ since $v_0 = U_0$. Moreover, the number of zeros of $v_\gamma$ remains constant for $\gamma \in [0,\eps_0)$. Indeed, as a solution of (\ref{eq:U-gamma}), $v_\gamma$ cannot have double zeros, and the largest zero $t_\gamma$ of $v_\gamma$ in $[0,\infty)$ remains locally bounded for $\gamma \in [0,\eps_0)$ since
$$
m_\gamma = \int_{t_\gamma}^\infty v_\gamma'(s)\,ds \le \|v_\gamma\|_{E}
 \int_{t_\gamma}^\infty e^{-\frac{2}{N} s}\,ds \le \frac{N}{2}\|v_\gamma\|_E \,e^{-\frac{2}{N} t_\gamma}.
$$
and therefore $t_\gamma \le - \frac{N}{2} \ln \frac{2 m_\gamma}{N \|v_\gamma\|_E}
$. This finishes the proof of (\ref{convergence in E-prop}).

By a continuation argument based on (\ref{L-gamma-isomorphism}), an application of the implicit function theorem in points $(\gamma,U_\gamma)$ for $\gamma>0$ and the same continuity considerations as above , we then see that the map 
$$
g: (-\eps_0,\frac{N-2}{N+\alpha_p}) \to E, \qquad g(\gamma) = \left\{
  \begin{aligned}
  &\tilde g(\gamma),&&\qquad \gamma \in (-\eps_0,0),\\
  &U_\gamma,&&\qquad \gamma \in \left[0,\frac{N-2}{N+\alpha_p} \right)  
  \end{aligned}
\right.
$$
is of class $C^1$. The proof is thus finished. 
\end{proof}

Since $U_0 = (-1)^{K-1} U_\infty$, we have now completed the proof of Proposition~\ref{limit-shape}.

\begin{remark}
\label{gamma-negative-definition}
	Using the function $g$ and $\eps_0>0$ from Proposition~\ref{implicit function for U-gamma}, it is convenient to define
	$$
	U_\gamma := g(\gamma) \quad \text{for $\gamma \in (-\eps_0,0).$}
	$$ 
With this definition, it follows from Proposition~\ref{implicit function for U-gamma} 
that the map $(-\eps_0,\frac{N-2}{N+\alpha_p}) \to E$, $\gamma \mapsto U_\gamma$ is of class $C^1$.

Moreover, implicit differentiation of (\ref{eq:U-gamma}) at $\gamma=0$ shows that $V= \partial_\gamma \big|_{\gamma=0} U_\gamma$ is given as the unique bounded solution of the problem 
\begin{equation}
\label{implicit-differentiation-formula-1}
-V'' - (p-1)e^{-t}|U_0|^{p-2}V = U_0' - t e^{-t}|U_0|^{p-2}U_0 \quad \text{in $[0,\infty)$,}\qquad V(0)=0.  
\end{equation}
 
\end{remark}

\section{Spectral asymptotics}
\label{sec:spectr-asympt-refeq}

This section is devoted to the proofs of Theorem~\ref{spectral-curves}
and Corollary~\ref{corollary-on-eigenvalue-curves}. We fix $p>2$, and we start by recalling some results from \cite{Amadori-Gladiali2} on the eigenvalue problem (\ref{eq:weighted-L-alpha-problem}) and its relationship to the Morse index of $u_\alpha$. Recall that we consider (\ref{eq:weighted-L-alpha-problem}) in weak sense. More precisely, we say that $\phi \in H^1_0(\B)$ is an eigenfunction of (\ref{eq:weighted-L-alpha-problem}) corresponding to the eigenvalue $\lambda \in \R$ if 
\begin{equation}
  \label{eq:weak-eigenvalue-singular}
q_\alpha(\phi,\psi) = \lambda \int_{\B} \frac{\phi(x) \psi(x)}{|x|^2}dx \qquad \text{for all $\psi \in H^1_0(\B)$,}
\end{equation}
where 
\begin{equation}
  \label{eq:def-q-alpha}
q_\alpha: H^1_0(\B)\times H^1_0(\B) \to \R, \qquad q_\alpha(v,w) := \int_{\B} \Bigl(\nabla v \cdot \nabla w  - (p-1)|x|^\alpha |u_\alpha|^{p-2} vw\Bigr) \, dx 
\end{equation}
is the quadratic form associated with the operator $L^\alpha$. Note that the RHS of (\ref{eq:weak-eigenvalue-singular}) is well-defined for $\phi, \psi \in H^1_0(\B)$ by Hardy's inequality.

\begin{lemma}(see \cite[Prop. 4.1 and 5.1]{Amadori-Gladiali2})\\
  \label{sec:spectral-asymptotics-morse-index}
Let $\alpha>\alpha_p$. Then we have: 
\begin{enumerate}
\item[(i)] The Morse index of $u_\alpha$ is given as the number of negative eigenvalues of \eqref{eq:weighted-L-alpha-problem}, counted with multiplicity. Moreover, every eigenfunction $v \in H^1_0(\B)$ of (\ref{eq:weighted-L-alpha-problem}) corresponding to a nonpositive eigenvalue is contained in $L^\infty(\B) \cap C^2(\B \setminus \{0\})$. 
\item[(ii)] Let $\phi \in H^1_0(\B)$ be an eigenfunction of (\ref{eq:weighted-L-alpha-problem}) corresponding to the eigenvalue $\lambda \in \R$. 
Then there exists a number $\ell_0 \in \N \cup \{0\}$, spherical harmonics $Y_\ell$ of degree $\ell$ and functions $\phi_\ell \in H^1_{0,rad}(\B)$, $\ell=1,\dots,\ell_0$ with the property that 
$$
\phi(x)= \sum_{\ell =0}^{\ell_0} \phi_\ell(x) Y_\ell\left(\frac{x}{|x|}\right) \qquad \text{for $x \in \B$.}
$$  
Moreover, for every $\ell \in \{1,\dots,\ell_0\}$, we either have $\phi_\ell \equiv 0$, or $\phi_\ell$ is an eigenfunction of (\ref{eq:weighted-eigenvalue-problem-reduced}) corresponding to the eigenvalue $\mu = \lambda-\lambda_\ell$. 
\end{enumerate}
\end{lemma}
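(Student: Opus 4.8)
The plan is to treat the two assertions separately, following \cite{Amadori-Gladiali2}, of which this lemma is essentially a restatement. For (i), the idea is to compare two variational descriptions of the ``negativity index'' of the quadratic form $q_\alpha$ from \eqref{eq:def-q-alpha}. On the one hand, since $L^\alpha$ is self-adjoint, bounded below, and has compact resolvent, its Morse index equals the supremum of $\dim W$ over all subspaces $W\subset H^1_0(\B)$ on which $q_\alpha$ is negative definite. On the other hand, I would set up the singular weighted problem \eqref{eq:weighted-L-alpha-problem} variationally via the Rayleigh quotient $\phi\mapsto q_\alpha(\phi,\phi)\big/\int_\B \phi^2|x|^{-2}\,dx$. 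The two ingredients here are: that $\int_\B \phi^2|x|^{-2}\,dx$ is finite on $H^1_0(\B)$ by Hardy's inequality and dominates $\int_\B\phi^2\,dx$ (as $|x|\le 1$ on $\B$), whence $q_\alpha(\phi,\phi)\ge -(p-1)\|u_\alpha\|_{L^\infty}^{p-2}\int_\B\phi^2|x|^{-2}\,dx$; and that, again by Hardy, the weighted problem has no essential spectrum below $0$, so its part below $0$ consists of finitely many eigenvalues of finite multiplicity, given by Courant--Fischer min--max values of the Rayleigh quotient. The $k$-th such value is then negative exactly when $q_\alpha$ admits a $k$-dimensional negative subspace, which matches the Morse index characterization above and proves the first assertion of (i). For the regularity statement, I would argue separately away from and near the origin: on $\B\setminus\{0\}$ the equation $-\Delta\phi = \bigl((p-1)|x|^\alpha|u_\alpha|^{p-2}+\lambda|x|^{-2}\bigr)\phi$ has locally H\"older continuous coefficients --- here one uses that $u_\alpha\in C^2(\ov\B)$ has only simple zeros, so $|u_\alpha|^{p-2}$ is locally H\"older --- and Schauder estimates give $\phi\in C^2(\B\setminus\{0\})$, while for boundedness near $0$ when $\lambda\le 0$ I would rewrite the equation as $-\Delta\phi+|\lambda||x|^{-2}\phi=(p-1)|x|^\alpha|u_\alpha|^{p-2}\phi\in L^\infty(\B)$ and use that the added potential $|\lambda||x|^{-2}\ge 0$ carries the favorable sign, so a Brezis--Kato/Moser iteration runs without obstruction and yields $\phi\in L^q(\B)$ for all finite $q$, hence $\phi\in L^\infty(\B)$.

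For (ii), the plan is to exploit rotational symmetry. Since $u_\alpha$, $|x|^\alpha$ and $|x|^{-2}$ are radial, both $L^\alpha$ and multiplication by $|x|^{-2}$ commute with the $O(N)$-action on $L^2(\B)$ and hence respect the orthogonal decomposition $L^2(\B)=\bigoplus_{\ell\ge 0}\bigl(L^2_{rad}\otimes\mathcal Y_\ell\bigr)$ into isotypic components, where $\mathcal Y_\ell$ denotes the space of spherical harmonics of degree $\ell$. Projecting an eigenfunction $\phi$ of \eqref{eq:weighted-L-alpha-problem} onto these components yields $\phi=\sum_{\ell\ge 0}\phi_\ell(x)Y_\ell(x/|x|)$ with $\phi_\ell\in H^1_{0,rad}(\B)$, and separating variables via $-\Delta_{\mathbb{S}^{N-1}}Y_\ell=\lambda_\ell Y_\ell$ shows that each $\phi_\ell$ either vanishes identically or is an eigenfunction of the reduced problem \eqref{eq:weighted-eigenvalue-problem-reduced} with $\mu=\lambda-\lambda_\ell$. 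That only finitely many $\phi_\ell$ are nonzero follows from the same lower bound as in (i): the spectrum of the reduced problem is bounded below by $-(p-1)\|u_\alpha\|_{L^\infty}^{p-2}$, so $\phi_\ell\equiv 0$ as soon as $\lambda-\lambda_\ell$ drops below this bound, which happens for all but finitely many $\ell$ since $\lambda_\ell\to\infty$.

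The main obstacle I expect is not a single computation but the careful functional-analytic treatment of the singular weighted problem: justifying that its part below $0$ is a finite, purely discrete spectrum captured by min--max (which rests on Hardy's inequality and the absence of essential spectrum there) and establishing the boundedness of eigenfunctions at the singularity $x=0$. These are precisely the delicate points carried out in \cite[Prop.~4.1 and 5.1]{Amadori-Gladiali2}, which I would cite rather than reproduce in full.
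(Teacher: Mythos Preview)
Your proposal is correct and matches the paper's approach: the paper does not prove this lemma at all but simply cites \cite[Prop.~4.1 and 5.1]{Amadori-Gladiali2}, and you likewise identify the result as a restatement of those propositions and indicate you would cite rather than reproduce the delicate parts. Your sketch of the underlying ideas (min--max comparison via Hardy's inequality for (i), isotypic decomposition under the $O(N)$-action for (ii), and the finiteness of nonzero $\phi_\ell$ via the lower bound on the reduced spectrum) is accurate and in fact goes beyond what the paper itself records.
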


Regarding the reduced weighted eigenvalue problem~(\ref{eq:weighted-eigenvalue-problem-reduced}), we also recall the following. 
 
\begin{lemma} (see \cite[p.19 and Prop. 3.7]{Amadori-Gladiali2})\\
\label{eigenvalue lemma}
Let $\alpha>\alpha_p$. Then $0$ is not an eigenvalue of \eqref{eq:weighted-eigenvalue-problem-reduced}, and the negative eigenvalues of \eqref{eq:weighted-eigenvalue-problem-reduced} are simple and given by 
\begin{equation}
  \label{eq:var-char-mu-j}
\mu_j(\alpha):= \inf_{\substack{W \subset H_{0,rad}^1(\B)\\ \dim W=j}} \max_{v \in W\setminus \{0\}}  \frac{\int_{\B} |\nabla v|^2 - (p-1)|x|^\alpha |u_\alpha|^{p-2} |v|^2 \, dx  }{\int_\B |x|^{-2}|v|^2 \, dx }, \qquad j= 1,\dots, K.
\end{equation}
\end{lemma}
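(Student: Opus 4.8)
The plan is to read~\eqref{eq:weighted-eigenvalue-problem-reduced} as the eigenvalue problem for the generalized pair $\bigl(q_\alpha,\,\langle\cdot,\cdot\rangle_{\ast}\bigr)$ on the closed subspace $H^1_{0,rad}(\B)\subset H^1_0(\B)$, where $q_\alpha$ is the form in~\eqref{eq:def-q-alpha} and $\langle\cdot,\cdot\rangle_{\ast}$ denotes the bilinear form $(v,\tilde v)\mapsto\int_\B |x|^{-2}v\tilde v\,dx$, and then to combine the spectral theory of semibounded forms with the radial Morse index and radial nondegeneracy of $u_\alpha$ provided by Theorem~\ref{sec:bifurc-nonr-solut-1}. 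I would begin with the functional setting: since $\alpha>0$ the weight $|x|^\alpha$ is bounded on $\B$, and since $u_\alpha\in C^2(\overline\B)$ the potential $V_\alpha:=(p-1)|x|^\alpha|u_\alpha|^{p-2}$ lies in $L^\infty(\B)$, so $q_\alpha$ is a symmetric, closed, bounded below form with form domain $H^1_{0,rad}(\B)$. By Hardy's inequality $\langle\cdot,\cdot\rangle_{\ast}$ is a bounded, positive definite bilinear form on $H^1_{0,rad}(\B)$; moreover $|x|^{-2}\ge1$ on $\B$ gives $\int_\B V_\alpha v^2\,dx\le C\langle v,v\rangle_{\ast}$, so for $\Lambda$ large $b(\cdot,\cdot):=q_\alpha(\cdot,\cdot)+\Lambda\langle\cdot,\cdot\rangle_{\ast}$ is a scalar product on $H^1_{0,rad}(\B)$ equivalent to $\|\cdot\|_{H^1_0}^2$. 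Defining $T$ on $(H^1_{0,rad}(\B),b)$ by $b(T\phi,\psi)=\langle\phi,\psi\rangle_{\ast}$ yields a bounded, positive, self-adjoint operator with the property that $\phi$ solves~\eqref{eq:weighted-eigenvalue-problem-reduced} with eigenvalue $\mu$ if and only if $T\phi=(\mu+\Lambda)^{-1}\phi$; the Courant--Fischer min-max principle then gives~\eqref{eq:var-char-mu-j} for every generalized eigenvalue lying below $\inf\sigma_{\mathrm{ess}}$ of the form problem, provided such eigenvalues exist.

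The crux is therefore to show $\inf\sigma_{\mathrm{ess}}>0$ (so that the negative min-max levels are genuine, finitely many eigenvalues) and that these eigenvalues are simple. For the first point I would use a decomposition of $\B$ into a small ball $B_\rho$ and its complement: on $B_\rho$, the Hardy inequality on $\R^N$ gives $\int|\nabla v|^2\ge\tfrac{(N-2)^2}{4}\langle v,v\rangle_{\ast}$ while $\int_{B_\rho}V_\alpha v^2\le C\rho^2\langle v,v\rangle_{\ast}$, so $q_\alpha(v,v)\ge\bigl(\tfrac{(N-2)^2}{4}-C\rho^2\bigr)\langle v,v\rangle_{\ast}$ for radial $v\in H^1_0(B_\rho)$; on $\B\setminus\overline{B_\rho}$ the weight $|x|^{-2}$ is bounded, so the embedding of $H^1_{0,rad}$ there into the corresponding $L^2$-space is compact. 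A standard localization (Persson-type) argument then gives $\inf\sigma_{\mathrm{ess}}\ge\tfrac{(N-2)^2}{4}>0$ since $N\ge3$; hence any generalized eigenvalue $\mu\le0$ is isolated of finite multiplicity, and~\eqref{eq:var-char-mu-j} holds for all negative ones. (Equivalently, the substitution $r=e^{-s}$ turns~\eqref{eq:weighted-eigenvalue-problem-reduced} into a half-line Sturm--Liouville problem whose potential decays exponentially.) Simplicity of each such eigenvalue follows from the linear second-order ODE satisfied by a radial eigenfunction: the indicial exponents of the Euler part at $r=0$ are $0$ and $2-N$, and the $|x|^{2-N}$ branch is not of finite energy for $N\ge3$, so the requirement of lying in $H^1_0$ near $r=0$ together with $\psi(1)=0$ leaves an at most one-dimensional eigenspace.

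Finally I would count the negative eigenvalues and rule out $\mu=0$. The number of negative generalized eigenvalues equals the negative index of $q_\alpha$ restricted to $H^1_{0,rad}(\B)$ --- this index is independent of the positive form used as pivot, by the min-max characterization --- which by Theorem~\ref{sec:bifurc-nonr-solut-1}(ii) is exactly the radial Morse index $K$; combined with simplicity this gives $\mu_1(\alpha)<\cdots<\mu_K(\alpha)<0$, and the same dimension count shows the next min-max level is nonnegative. To see $0$ is not an eigenvalue, suppose $\psi\in H^1_{0,rad}(\B)\setminus\{0\}$ satisfies $q_\alpha(\psi,\chi)=0$ for all radial $\chi$; since $L^\alpha$ commutes with rotations, $q_\alpha(\psi,\cdot)$ automatically vanishes on every spherical-harmonic sector of degree $\ge1$, so $q_\alpha(\psi,\cdot)\equiv0$ on $H^1_0(\B)$ and hence, by elliptic regularity, $\psi$ is a radial element of $\ker L^\alpha$, which is impossible by the radial nondegeneracy in Theorem~\ref{sec:bifurc-nonr-solut-1}(i). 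The main obstacle in this scheme is the first step of the second paragraph, namely pinning down the essential spectrum strictly above $0$: this is exactly where the singular weight $|x|^{-2}$ near the origin destroys compactness and the Hardy inequality has to be exploited sharply; the remaining steps are routine bookkeeping with semibounded forms and the already-available radial spectral information.
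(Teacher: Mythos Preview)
Your proposal is correct and, in fact, does considerably more than the paper itself. In the paper this lemma is not proved at all: it is simply quoted from \cite[p.19 and Prop.~3.7]{Amadori-Gladiali2}, with only the one-line remark that the assertion ``$0$ is not an eigenvalue'' already follows from the radial nondegeneracy in Theorem~\ref{sec:bifurc-nonr-solut-1}(i). That is precisely the argument you give in your final paragraph, so on this point you and the paper coincide.

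What you add beyond the paper is a self-contained justification of the rest of the statement: the semibounded-form setup, the Persson-type localization showing that the essential spectrum of the radial weighted problem lies at or above $(N-2)^2/4>0$, the ODE argument for simplicity via the indicial exponents at $r=0$, and the observation that the number of negative generalized eigenvalues equals the negative index of $q_\alpha$ on $H^1_{0,rad}(\B)$, which by Theorem~\ref{sec:bifurc-nonr-solut-1}(ii) is exactly $K$. All of these steps are sound; the only point worth tightening slightly is the essential-spectrum argument, where one should be a bit careful that the Hardy constant $(N-2)^2/4$ is obtained for functions supported in $B_\rho$ (not just vanishing on $\partial\B$), but this is standard since the sharp Hardy inequality holds on all of $\R^N$ and hence on any ball for functions with zero trace. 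In short: your route supplies the details that the paper outsources to \cite{Amadori-Gladiali2}, and does so correctly.
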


Here we point out that Theorem~\ref{sec:bifurc-nonr-solut-1}(i) already implies that zero is not an eigenvalue of \eqref{eq:weighted-eigenvalue-problem-reduced}. We also note that Proposition~\ref{spectral-curves-0} now merely follows by combining Lemma \ref{sec:spectral-asymptotics-morse-index} and Lemma~\ref{eigenvalue lemma}. 
 
We now turn to the proof of Theorem \ref{spectral-curves}. For this we transform the radial eigenvalue problem (\ref{eq:weighted-eigenvalue-problem-reduced}). Note that, if we write an eigenfunction $\psi \in H^1_{0,rad}(\B)$ as a function of the radial variable $r = |x|$, it solves 
$$
-\psi'' -\frac{N-1}{r}\psi' - (p-1)r^{\alpha}|u_\alpha(r)|^{p-2}\psi(r) = \frac{\mu}{r^2} \psi \qquad \text{in $(0,1)$,}\qquad \qquad  \psi(1)=0.
$$
We transform this problem by considering again $I:=(0,\infty)$ and setting 
\begin{equation}
  \label{eq:transformed-variables}
\nu = \frac{1}{(N+\alpha)^2} \mu_j(\alpha), \qquad \qquad \Psi(t)=(N+\alpha) \psi(e^{- \frac{t}{N+\alpha}})\quad \text{for $t \in \overline I$.}
\end{equation}
This gives rise to the eigenvalue problem
\begin{equation}
  \label{eq:trans-weighted-eigenvalue-problem}
\left\{
  \begin{aligned}
&-(e^{-\gamma t}\Psi')' - (p-1)e^{-t}|U_\gamma(t)|^{p-2}\Psi= 
\nu e^{-\gamma t} \Psi \quad \text{in $I$,}\\
&\qquad \Psi(0)=0, \quad \Psi \in L^\infty(I)
\end{aligned}
\right.
\end{equation}
with $\gamma= \gamma(\alpha)=\frac{N-2}{N+\alpha} \in (0,\frac{N-2}{N+\alpha_p})$ as before. Here, we have added the condition $\Psi \in L^\infty(I)$ since we focus on
eigenfunctions corresponding to negative eigenvalues, and in this 
case eigenfunctions $\psi \in H^1_{0,rad}(\B)$ of (\ref{eq:weighted-eigenvalue-problem-reduced}) are bounded by Lemma~\ref{eigenvalue lemma}. In the following, we also consider the case $\gamma=0$ in (\ref{eq:trans-weighted-eigenvalue-problem}), which corresponds to the linearization of (\ref{eq:limit-U-0-1}) at $U_0$:
\begin{equation}
  \label{eq:weighted-eigenvalue-translimit}
\left\{
  \begin{aligned}
&-\Psi'' - (p-1)e^{-t}|U_0(t)|^{p-2}\Psi= 
\nu \Psi \quad \text{in $I$,}\\
&\qquad \Psi(0) = 0, \quad \Psi \in L^\infty(I).
\end{aligned}
\right.
\end{equation}
We note that for $\gamma \in [0,\frac{N-2}{N+\alpha})$ and every solution $\Psi$ of (\ref{eq:trans-weighted-eigenvalue-problem}) there exists a sequence $t_n \to \infty$ with $\Psi'(t_n) \to 0$, which implies that 
\begin{equation}
  \label{eq:int-formular-eigenvalue}
e^{-\gamma t}\Psi'(t) = \int_{t}^\infty -(e^{-\gamma s}\Psi')'(s)\,ds = 
\int_{t}^\infty \bigl(\nu e^{-\gamma s}  + (p-1)e^{-s}|U_\gamma(s)|^{p-2}\bigr)\Psi(s)\,ds
\end{equation}
for $t \ge 0$. We also note that problem (\ref{eq:trans-weighted-eigenvalue-problem}) can be rewritten as 
\begin{equation}
  \label{eq:trans-weighted-eigenvalue-problem-rewritten}
\left\{
  \begin{aligned}
&-\Psi'' + \gamma \Psi'-  (p-1)e^{(\gamma-1)t}|U_\gamma(t)|^{p-2}\Psi= 
\nu \Psi \quad \text{in $I$,}\\
&\qquad \Psi(0)=0, \quad \Psi \in L^\infty(I).
\end{aligned}
\right.
\end{equation}
We need the following estimate in terms of the space $C_\delta^2(I)$ defined in Section~\ref{sec:limit-alpha-to}. 

\begin{lemma}
\label{bounded-sol-exp-decay} 
Let $\nu_\sdiamond<0$, $\gamma_\sdiamond \in (0,\frac{N-2}{N+\alpha_p})$, and let $\delta = \frac{1}{2}\bigl(\sqrt{1-2\nu_\sdiamond}-1\bigr)>0$. Then there exists a constant $C=C(\nu_\sdiamond,\gamma_\sdiamond)>0$ such that for every solution $\Psi \in L^\infty(I)$ of the equation
\begin{equation}
  \label{eq:bounded-sol-decay-eqn}
-\Psi'' + \gamma \Psi'-  (p-1)e^{(\gamma-1)t}|U_\gamma(t)|^{p-2}\Psi= 
\nu \Psi 
\end{equation}
with $\nu \le \nu_\sdiamond$ and $\gamma \in [0,\gamma_\sdiamond]$ we have $\Psi \in C_\delta^2(I)$ with 
$\|\Psi\|_{C^2_\delta} \le C \|\Psi\|_{L^\infty(I)}$.
\end{lemma}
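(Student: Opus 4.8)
The plan is to use a standard ODE/decay argument together with the integral representation \eqref{eq:int-formular-eigenvalue}, exploiting the fact that the potential term $(p-1)e^{(\gamma-1)t}|U_\gamma(t)|^{p-2}$ decays exponentially. First I would note that, by Proposition~\ref{implicit function for U-gamma} and the definition of $U_\gamma$ in Remark~\ref{gamma-negative-definition}, the map $\gamma \mapsto U_\gamma$ is continuous from the compact interval $[0,\gamma_\sdiamond]$ into $E \subset L^\infty(I)$, so $\sup_{\gamma \in [0,\gamma_\sdiamond]}\|U_\gamma\|_{L^\infty(I)} =: M < \infty$. Hence there is a constant $A>0$, depending only on $p,\gamma_\sdiamond,\nu_\sdiamond$, with $0 \le (p-1)e^{(\gamma-1)t}|U_\gamma(t)|^{p-2} \le A e^{-(1-\gamma_\sdiamond)t}$ for all $t \ge 0$ and all admissible $\gamma$. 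The key comparison is with the constant-coefficient equation $-\Psi'' = \nu\Psi$: for $\nu \le \nu_\sdiamond < 0$, the indicial roots are $\pm\sqrt{-\nu}$, and $\sqrt{-\nu} \ge \sqrt{-\nu_\sdiamond} > 2\delta$ (indeed $\delta$ was chosen precisely so that $\delta(\delta+1) = -\nu_\sdiamond/2$, giving $\sqrt{-\nu_\sdiamond} = \sqrt{2\delta(2\delta+1)} \ge \sqrt{\delta} \ge \ldots$; the relevant point is simply $\sqrt{-\nu} > \delta$), which is what produces the weighted decay.

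Next I would establish the $C^0_\delta$ bound. Since $\Psi$ is bounded, there is a sequence $t_n \to \infty$ with $\Psi'(t_n) \to 0$ (otherwise $|\Psi'|$ would stay bounded below for large $t$ and $\Psi$ would be unbounded), so \eqref{eq:int-formular-eigenvalue} applies; rewriting in the form \eqref{eq:trans-weighted-eigenvalue-problem-rewritten}, one gets $e^{-\gamma t}\Psi'(t) = \int_t^\infty\bigl(\nu e^{-\gamma s} + (p-1)e^{-s}|U_\gamma(s)|^{p-2}\bigr)\Psi(s)\,ds$, hence $\Psi'(t) = \int_t^\infty e^{\gamma(t-s)}\bigl(\nu + (p-1)e^{(\gamma-1)s}|U_\gamma(s)|^{p-2}\bigr)\Psi(s)\,ds$. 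Integrating once more from $t$ to $\infty$ (the limit $\lim_{t\to\infty}\Psi(t)$ exists because $\Psi' \in L^1$, and it must be $0$ since otherwise the integral equation forces $\Psi$ to grow — alternatively one sees directly from the sign of $\Psi''$ on large intervals that $\Psi \to 0$), one obtains a Volterra-type integral equation for $\Psi$ on $[T,\infty)$. Define $m(t) := \sup_{s \ge t} e^{\delta s}|\Psi(s)|$; the integral estimate yields, for $t$ large enough that $A e^{-(1-\gamma_\sdiamond)t}$ is small, an inequality of the form $e^{\delta t}|\Psi(t)| \le \theta\, m(t) + (\text{small})\cdot m(t)$ with $\theta = |\nu|/(\sqrt{-\nu})^2$-type contraction factor strictly less than $1$ (using $\delta < \sqrt{-\nu}$), which after taking suprema gives $m(t) \le C m(t_0)$ for fixed large $t_0$ and then, by boundedness of $\Psi$ on $[0,t_0]$, $m(0) \le C\|\Psi\|_{L^\infty(I)}$. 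This is the $C^0_\delta$ estimate $\|\Psi\|_{C_\delta} \le C\|\Psi\|_{L^\infty(I)}$.

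Finally, the $C^1_\delta$ and $C^2_\delta$ bounds follow by bootstrapping: from $\Psi'(t) = \int_t^\infty e^{\gamma(t-s)}\bigl(\nu + (p-1)e^{(\gamma-1)s}|U_\gamma(s)|^{p-2}\bigr)\Psi(s)\,ds$ and the bound $|\Psi(s)| \le \|\Psi\|_{C_\delta}e^{-\delta s}$ just obtained, one estimates $|\Psi'(t)| \le (|\nu| + A)\|\Psi\|_{C_\delta}\int_t^\infty e^{\gamma(t-s)}e^{-\delta s}\,ds \le C' \|\Psi\|_{C_\delta} e^{-\delta t}$ since $\gamma \le \gamma_\sdiamond$ and $\delta > 0$ (the exponent works out because $\gamma_\sdiamond < 1$ and, more simply, $e^{\gamma(t-s)}e^{-\delta s} \le e^{-\delta t}$ when $\gamma \le \delta$, while for $\gamma \in (\delta,\gamma_\sdiamond]$ one uses the extra decay from the potential or just $\int_t^\infty e^{\gamma(t-s)-\delta s}ds = \frac{e^{-\delta t}}{\gamma-\delta}$... hence I would simply note $\delta$ can be taken $\le \gamma_\sdiamond$ is not needed — the clean route is $\delta<1-\gamma_\sdiamond$ may fail, so I fold the potential's decay $e^{-(1-\gamma_\sdiamond)s}$ into the estimate and use $\nu$-term separately with the genuine bound $\int_t^\infty e^{\gamma(t-s)}e^{-\delta s}ds \le \frac{1}{1-\gamma_\sdiamond}e^{-\delta t}$ valid because $\delta < \sqrt{-\nu_\sdiamond}$ and one may shrink $\delta$ a priori to also satisfy $\delta \le 1 - \gamma_\sdiamond$ — this is harmless since a smaller $\delta$ gives a weaker, still sufficient, conclusion; but the lemma fixes $\delta$, so instead I keep $e^{-(1-\gamma)s} \le e^{-(1-\gamma_\sdiamond)s}$ throughout). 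Then $\Psi'' = \gamma\Psi' - (p-1)e^{(\gamma-1)t}|U_\gamma|^{p-2}\Psi - \nu\Psi$ directly gives $|\Psi''(t)| \le \gamma_\sdiamond|\Psi'(t)| + (A + |\nu_\sdiamond|)(|\Psi(t)| + |\Psi'(t)|) \le C'' \|\Psi\|_{C_\delta}e^{-\delta t}$, so $\Psi \in C^2_\delta(I)$ with $\|\Psi\|_{C^2_\delta} \le C\|\Psi\|_{L^\infty(I)}$. The main obstacle is organizing the first (zeroth-order) decay estimate so that the contraction constant is genuinely $<1$ uniformly in $\gamma \in [0,\gamma_\sdiamond]$ and $\nu \le \nu_\sdiamond$; once $\Psi$ is known to decay like $e^{-\delta t}$, the higher-order bounds are routine.
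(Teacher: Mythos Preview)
Your approach differs from the paper's: you attempt a Volterra-type integral/contraction argument, whereas the paper uses a comparison (barrier) function and the maximum principle. The paper's argument is short: on $\{t>t_0:\Psi>0\}$ one has the differential inequality $\Psi''-\gamma\Psi'+\tfrac{\nu_\sdiamond}{2}\Psi\ge 0$, and the supersolution $\phi_\eps(t)=C_\Psi e^{-\delta t}+\eps e^{\delta t}$ with $C_\Psi=e^{\delta t_0}\|\Psi\|_{L^\infty}$ satisfies the reverse inequality precisely because $\delta$ solves $\delta^2+\delta+\tfrac{\nu_\sdiamond}{2}=0$ (using $|\gamma|\le 1$). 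This immediately gives $|\Psi(t)|\le C\|\Psi\|_{L^\infty}e^{-\delta t}$, after which the $C^1_\delta$ and $C^2_\delta$ bounds follow from \eqref{eq:int-formular-eigenvalue} and the equation itself. The specific value of $\delta$ in the statement is tailor-made for this barrier computation, which your sketch never engages with.

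Your integral-equation route is in principle viable, but as written it has real gaps. First, the quantity $m(t)=\sup_{s\ge t}e^{\delta s}|\Psi(s)|$ is not known to be finite a priori; a correct version of this argument must either work on finite intervals $[t_0,T]$ and pass to the limit, or first establish \emph{some} exponential decay and then bootstrap to the rate $\delta$. Second, your ``contraction factor $\theta=|\nu|/(\sqrt{-\nu})^2$'' is literally equal to $1$, so no contraction results; to get a genuine factor $<1$ you would need to write the variation-of-constants representation for the homogeneous operator $-\Psi''+\gamma\Psi'-\nu\Psi$ (whose decaying mode is $e^{\lambda_- t}$ with $\lambda_-=\tfrac{\gamma-\sqrt{\gamma^2-4\nu}}{2}$) and treat the potential $V(t)=(p-1)e^{(\gamma-1)t}|U_\gamma|^{p-2}$ as the perturbation, using its smallness for $t\ge t_0$. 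Third, your discussion of the $C^1_\delta$ step drifts into case distinctions on whether $\gamma\lessgtr\delta$ that are unnecessary: from $\Psi'(t)=\int_t^\infty e^{\gamma(t-s)}\bigl(\nu+V(s)\bigr)\Psi(s)\,ds$ and $|\Psi(s)|\le Ce^{-\delta s}$ one directly gets $|\Psi'(t)|\le \tfrac{(|\nu|+A)C}{\gamma+\delta}e^{-\delta t}$, with no constraint relating $\gamma$ and $\delta$. In short, the strategy can be salvaged, but the paper's barrier argument is both cleaner and explains why $\delta$ has the form it does.
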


\begin{proof}
Since $\|U_{\gamma}\|_{L^\infty(I)}$ remains uniformly bounded for $\gamma \in [0,\gamma_\sdiamond]$ by Proposition~\ref{implicit function for U-gamma}, there exists $t_0= t_0(\nu_\sdiamond,\gamma_\sdiamond)>0$ such that 
$$
(p-1) e^{(\gamma-1)t} |U_{\gamma}(t)|^{p-2} \leq -\frac{\nu_\sdiamond}{2} \quad \text{for $t \geq t_0$, $\gamma \in \left[0,\gamma_\sdiamond\right]$.}
$$
Let $\Psi$ be a bounded solution of (\ref{eq:bounded-sol-decay-eqn}) on $I$. Then $\Psi$ solves the differential inequality 
\begin{equation}
  \label{eq:psi-diff-ineq}
\Psi'' - \gamma \Psi' + \frac{\nu_\sdiamond}{2} \Psi \ge 0 \qquad \text{in the open set $U_\Psi:= \{t \in (t_0,\infty)\,:\, \Psi(t)>0\}.$} 
\end{equation}
For fixed $\eps>0$, we consider the function
$$
t \mapsto \phi_\eps(t):= C_\Psi e^{-\delta t} + \eps e^{\delta t} \qquad \text{with $C_\Psi:= e^{\delta t_0} \|\Psi\|_{L^\infty(I)}$.}
$$
By (\ref{eq:psi-diff-ineq}) and the definition of $\delta$, the function $v_\eps:= \phi_\eps -\Psi$ satisfies 
\begin{align*}
v_\eps'' - \gamma v_\eps'  + \frac{\nu_\sdiamond}{2} v_\eps  \le  (\delta^2 +  \frac{\nu_\sdiamond}{2})\phi_\eps
+ \gamma \delta C_\Psi e^{-\delta t} - \gamma \delta \eps e^{\delta t} &
\le (\delta^2 +|\gamma| \delta + \frac{\nu_\sdiamond}{2})\phi_\eps\\
&\le (\delta^2 + \delta + \frac{\nu_\sdiamond}{2})\phi_\eps = 0 \qquad \text{in $U_\Psi$.}
\end{align*}
This implies that $v_\eps$ cannot attain a negative minimum in the set $(t_0,\infty)$. Moreover, by definition of $v_\eps$ we have 
$$
v_\eps(t_0) \ge 0 \qquad \text{and}\qquad \lim_{t \to \infty}v_\eps(t)= \infty.
$$
Consequently, we have $v_\eps \ge 0$ and therefore 
$\Psi \le \phi_\eps$ on $[t_0,\infty)$. Replacing $\Psi$ by $-\Psi$ in the argument above, we find that $|\Psi| \le \phi_\eps$ on $[t_0,\infty)$. By considering the limit $\eps \to 0$, we deduce that 
$$
|\Psi(t)| \le C_\Psi e^{-\delta t} = C \|\Psi\|_{L^\infty(I)} e^{-\delta t} \qquad \text{for $t \ge t_0$ with $C:= e^{\delta t_0}$.} 
$$
Since the same inequality obviously holds for $t \in [0,t_0)$, we conclude that 
$$
|\Psi(t)| \le C\|\Psi\|_{L^\infty(I)} e^{-\delta t} \qquad \text{for $t \ge 0$.} 
$$
Finally, using (\ref{eq:int-formular-eigenvalue}) and (\ref{eq:bounded-sol-decay-eqn}), we also get that 
$$
|\Psi'(t)| \le C\|\Psi\|_{L^\infty(I)} e^{-\delta t} \quad \text{and} \quad |\Psi''(t)| \le C\|\Psi\|_{L^\infty(I)} e^{-\delta t} \qquad \text{for $t \ge 0$} 
$$
after making $C>0$ larger if necessary. The proof is thus finished. 
\end{proof}

\begin{proposition}
\label{sec:preliminaries-limit-problem-3}
For $\gamma \in [0,\frac{N-2}{N+\alpha_p})$, the eigenvalue problem~(\ref{eq:trans-weighted-eigenvalue-problem}) admits precisely $K$ negative eigenvalues $\nu_1(\gamma) < \nu_2(\gamma) < \dots< \nu_K(\gamma) < 0$ characterized variationally by 
\begin{equation}
\label{var-char-nu-j} 
\nu_j(\gamma)  = \inf_{\substack{W \subset H^1_0(I)\\ \dim W=j}} \max_{\Psi \in W\setminus \{0\}}  \frac{\int_0^\infty e^{-\gamma t}\Psi'^2 -  (p-1)e^{-t}|U_\gamma|^{p-2}\Psi^2 \, dt}{\int_0^\infty e^{-\gamma t} \Psi^2 \, dt} \qquad \text{for $j=1,\dots,K$.}
\end{equation}
\end{proposition}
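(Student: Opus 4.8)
\emph{Overview.} I would treat the cases $\gamma>0$ and $\gamma=0$ separately: for $\gamma>0$ the problem transports back to the ball and the statement follows from Lemma~\ref{eigenvalue lemma}, while for $\gamma=0$ I would combine an explicit test-function estimate with a perturbation argument in $\gamma$.

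\emph{The case $\gamma>0$.} Fix $\gamma\in(0,\tfrac{N-2}{N+\alpha_p})$ and let $\alpha>\alpha_p$ be the unique value with $\gamma=\gamma(\alpha)$. Writing a radial function $\psi$ as a function of $r=|x|$, the substitution $\Psi(t)=(N+\alpha)\psi(e^{-t/(N+\alpha)})$ from \eqref{eq:transformed-variables} is a linear bijection from $H^1_{0,rad}(\B)$ onto the weighted space $\mathcal{W}_\gamma:=\{\Psi\in H^1_{loc}(\overline{I}):\Psi(0)=0,\ \int_0^\infty e^{-\gamma t}((\Psi')^2+\Psi^2)\,dt<\infty\}$; using $r^{\alpha+N-1}\,dr\leftrightarrow-\tfrac1{N+\alpha}e^{-t}\,dt$, $r^{N-3}\,dr\leftrightarrow-\tfrac1{N+\alpha}e^{-\gamma t}\,dt$ and $u_\alpha(r)=(N+\alpha)^{2/(p-2)}U_\gamma(t)$, the Rayleigh quotient in \eqref{eq:var-char-mu-j} becomes $(N+\alpha)^2$ times the quotient in \eqref{var-char-nu-j}, but with test space $\mathcal{W}_\gamma$ in place of $H^1_0(I)$. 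By Lemma~\ref{eigenvalue lemma}, it follows that $(N+\alpha)^{-2}\mu_j(\alpha)$, $j=1,\dots,K$, are exactly the simple negative eigenvalues of \eqref{eq:trans-weighted-eigenvalue-problem}, characterized by the min-max over $j$-dimensional subspaces of $\mathcal{W}_\gamma$. Since $e^{-t}\le e^{-\gamma t}\le1$ on $\overline{I}$, the inclusion $H^1_0(I)\hookrightarrow\mathcal{W}_\gamma$ is continuous and dense, hence a form core for the associated operator, so the min-max values over $j$-dimensional subspaces of $H^1_0(I)$ and of $\mathcal{W}_\gamma$ agree; this gives the proposition for $\gamma>0$ (alternatively, one notes that the negative eigenfunctions lie in $H^1_0(I)$ by the exponential decay of Lemma~\ref{bounded-sol-exp-decay}, which yields both needed inequalities directly).

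\emph{The case $\gamma=0$.} Here \eqref{eq:weighted-eigenvalue-translimit} is the Dirichlet eigenvalue problem for $\Psi\mapsto-\Psi''-(p-1)e^{-t}|U_0|^{p-2}\Psi$ on $L^2(I)$ with form domain $H^1_0(I)$; since the potential is bounded and decays exponentially, the operator is a relatively compact perturbation of $-\Psi''$, its essential spectrum is $[0,\infty)$, and its negative spectrum consists of finitely many eigenvalues, each simple (a solution vanishing at $0$ is unique up to a scalar). Courant--Fischer over $H^1_0(I)$ then gives \eqref{var-char-nu-j}, so only the count remains. For the lower bound I would exhibit a $K$-dimensional subspace of $H^1_0(I)$ on which $Q_0(\Psi):=\int_0^\infty((\Psi')^2-(p-1)e^{-t}|U_0|^{p-2}\Psi^2)\,dt$ is negative. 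Let $0=z_0<z_1<\dots<z_{K-1}$ be $0$ together with the zeros of $U_0$, and recall $\lim_{t\to\infty}U_0(t)=\ell\ne0$ (Lemma~\ref{bounded-sol-E}). On the bounded nodal domains put $\Psi_k:=U_0\mathbf{1}_{(z_{k-1},z_k)}\in H^1_0(I)$; integrating by parts and using $-U_0''=e^{-t}|U_0|^{p-2}U_0$ gives $\int_{z_{k-1}}^{z_k}(\Psi_k')^2=\int_{z_{k-1}}^{z_k}e^{-t}|U_0|^p$, hence $Q_0(\Psi_k)=(2-p)\int_{z_{k-1}}^{z_k}e^{-t}|U_0|^p<0$ for $k=1,\dots,K-1$. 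On the last, unbounded nodal domain take $\Psi_K:=\eta_L U_0$ (restricted to $t\ge z_{K-1}$, extended by $0$), where $\eta_L=1$ on $[0,L]$, $\eta_L=0$ on $[2L,\infty)$ and $|\eta_L'|\le2/L$; since $U_0'$ decays exponentially and $U_0\to\ell$, the numerator $Q_0(\Psi_K)$ tends to $(2-p)\int_{z_{K-1}}^\infty e^{-t}|U_0|^p<0$ while $\int\Psi_K^2\to\infty$ as $L\to\infty$, so $Q_0(\Psi_K)<0$ for $L$ large. As the $\Psi_k$ have essentially disjoint supports, their span is $K$-dimensional with $Q_0<0$ throughout, so $\nu_K(0)<0$ and there are at least $K$ negative eigenvalues.

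\emph{Upper bound and main difficulty.} If \eqref{eq:weighted-eigenvalue-translimit} had $K+1$ negative eigenvalues, the span $W$ of the corresponding eigenfunctions (which decay exponentially by Lemma~\ref{bounded-sol-exp-decay}, hence lie in $H^1_0(I)$) would be a $(K+1)$-dimensional subspace with $Q_0\le-c\,\|\cdot\|^2_{L^2(I)}$ for some $c>0$. By Proposition~\ref{implicit function for U-gamma}, $U_\gamma\to U_0$ in $E$ as $\gamma\to0^+$, so $e^{-t}|U_\gamma|^{p-2}\to e^{-t}|U_0|^{p-2}$ uniformly on $I$ ($s\mapsto|s|^{p-2}$ being uniformly continuous on bounded sets) and $e^{-\gamma t}\to1$ boundedly; hence the numerator and denominator of \eqref{var-char-nu-j} converge uniformly on the finite-dimensional $W$, and for small $\gamma>0$ the $\gamma$-Rayleigh quotient is still negative on $W\setminus\{0\}$. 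This would force \eqref{eq:trans-weighted-eigenvalue-problem} to have at least $K+1$ negative eigenvalues, contradicting the count established for $\gamma>0$. Therefore there are exactly $K$ negative eigenvalues for $\gamma=0$ as well. The genuinely delicate points are both in the case $\gamma=0$: the unbounded last nodal domain of $U_0$, which forces the truncation in the lower bound, and the stability of the count as $\gamma\to0^+$, which rests on the $E$-convergence $U_\gamma\to U_0$ and on the fact that here only continuity — not differentiability — of $U\mapsto|U|^{p-2}$ is required.
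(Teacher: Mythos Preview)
Your argument is correct. The treatment of $\gamma>0$ matches the paper's (density of $C^\infty_c(I)$, equivalently $H^1_0(I)$, in the weighted form domain together with Lemma~\ref{eigenvalue lemma}). The case $\gamma=0$, however, is handled by a genuinely different route.

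For the lower bound $\nu_K(0)<0$, the paper introduces the auxiliary function $w:=U_0'-\tfrac{1}{p-2}U_0$, which solves the linearized equation, and shows via a sign analysis that $w$ has at least $K$ zeros in $(0,\infty)$; standard oscillation theory then gives $\nu_K(0)<0$. Your direct test-function construction from the nodal pieces of $U_0$ (with the cut-off on the unbounded piece) is more elementary and avoids the auxiliary function altogether; the disjoint supports make the quadratic form diagonal on the span, so the conclusion is immediate.

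For the upper bound, the paper again exploits $w$: assuming $\nu_{K+1}(0)<0$, Sturm comparison forces $w$ to have at least $K+1$ zeros, and the identity
\[
\Bigl(e^{-t}|U_0|^{p-2}+\tfrac{1}{(p-2)^2}\Bigr)U_0=-w'-\tfrac{1}{p-2}w
\]
then shows that $U_0$ must vanish between any two zeros of $w$, contradicting its nodal count. Your perturbation argument instead transports a hypothetical $(K{+}1)$-dimensional negative subspace from $\gamma=0$ to small $\gamma>0$ via the $E$-convergence $U_\gamma\to U_0$ from Proposition~\ref{implicit function for U-gamma}, and contradicts the count already established for $\gamma>0$. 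This is precisely the continuity mechanism the paper deploys afterwards in Lemma~\ref{eq:gamma-sufficient}; you are in effect anticipating that lemma. The trade-off is that the paper keeps the $\gamma=0$ case self-contained as a pure ODE argument (at the price of the clever differential identity above), while your route is softer but makes the $\gamma=0$ count logically dependent on the $\gamma>0$ case and on the convergence $U_\gamma\to U_0$.
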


\begin{proof}
Let $\gamma \in [0,\frac{N-2}{N+\alpha_p})$. We first show that 
\begin{equation}
  \label{eq:gamma-K-less-zero}
\nu_K(\gamma)<0.  
\end{equation}
For $\gamma>0$, this follows by Lemma~\ref{eigenvalue lemma}. Indeed, in (\ref{eq:var-char-mu-j}) we may, by density, replace $H^1_{0,rad}(\B)$ by the space of radial functions in $C^\infty_c(\B \setminus \{0\})$, and this space corresponds to the dense subspace $C^\infty_c(I) \subset H^1_0(I)$ after the transformation (\ref{eq:transformed-variables}). To show (\ref{eq:gamma-K-less-zero}) in the case $\gamma=0$, we use the auxiliary function $w:= U_0' -\frac{1}{p-2}U_0$, which, by direct computation, solves the linearized equation $-w'' -(p-1)e^{-t}|U_0|^{p-2}w = 0$ in $(0,\infty)$. It is clear that $w$ has a zero between any two zeros of $U_0$ on $[0,\infty)$.
Moreover, letting $t_*>0$ denote the largest zero of $U_0$, we find that 
the numbers 
$$
w(t_*)=U_0'(t_*)\qquad \text{and}\qquad  \lim_{t \to \infty}w(t)=-\frac{1}{p-2}\lim_{t \to \infty}U_0(t)
$$
have opposite sign, hence $w$ also has a zero in $(t_*,\infty)$. Since $U_0$ has $K-1$ zeros in $(0,\infty)$ and $U_0(0)=0$, we infer that $w$ has at least $K$ zeros in $(0,\infty)$. From this, it is standard to deduce that $\nu_K(0)<0$. We thus have proved (\ref{eq:gamma-K-less-zero}).

Next we note that eigenfunctions $\Psi$ of (\ref{eq:trans-weighted-eigenvalue-problem}) corresponding to an eigenvalue 
$\nu_j(\gamma)<0$ have precisely $j-1$ zeros in $I$. Indeed, this follows from standard Sturm-Liouville theory since any such eigenfunction decays exponentially as $t \to \infty$ together with their first and second derivatives by Lemma~\ref{bounded-sol-exp-decay}. It also follows that $\nu_j(\gamma)$ is simple in this case, i.e., the corresponding eigenspace is one-dimensional.

In the case $\gamma>0$, the claim now follows from Lemma~\ref{eigenvalue lemma}, which guarantees that $\nu_1(\gamma),\dots,\nu_K(\gamma)$ are precisely the negative eigenvalues of (\ref{eq:trans-weighted-eigenvalue-problem-rewritten}). It remains to show that (\ref{eq:weighted-eigenvalue-translimit}) has precisely $K$ negative eigenvalues given by (\ref{var-char-nu-j}) in the case $\gamma=0$. Since the essential spectrum of the linearized operator $L_0: H^2(I) \cap H^1_0(I) \to L^2(I)$, $L_0 \Psi = -\Psi'' - (p-1)e^{-t}|U_0(t)|^{p-2}\Psi$ is given by $[0,\infty)$, standard compactness arguments 
show that $\nu_j(0)$ is an eigenvalue of (\ref{eq:weighted-eigenvalue-translimit}) whenever $\nu_j(0)<0$. Suppose by contradiction that $\nu_{K+1}(0)<0$, and let $v$ be a corresponding eigenfunction. Then $v$ has $K$ zeros in $(0,\infty)$, and $\lim \limits_{t \to \infty}v(t)=\lim \limits_{t \to \infty}v'(t)= 0$ as $t \to \infty$ by Lemma~\ref{bounded-sol-exp-decay}. By Sturm comparison, it then follows that $w$ has at least $K+1$ zeros in $(0,\infty)$. On the other hand, since 
$$
\Bigl(e^{-t}|U_0|^{p-2}+\frac{1}{(p-2)^2}\Bigr)U_0 =-U_0''+\frac{1}{(p-2)^2}U_0 = 
-w' - \frac{1}{p-2}w,
$$
$U_0$ has a zero between any two zeros of $w$. This contradicts the fact that $U_0$ has precisely $K-1$ zeros in $(0,\infty)$. We thus conclude that (\ref{eq:weighted-eigenvalue-translimit}) admits precisely $K$ negative eigenvalues given by (\ref{var-char-nu-j}) in the case $\gamma=0$.  
\end{proof}

We may now deduce the continuous dependence of the negative eigenvalues of (\ref{eq:trans-weighted-eigenvalue-problem}).

\begin{lemma} \label{eq:gamma-sufficient} 
	For $j=1,\dots,K$, the function $\nu_j: [0,\frac{N-2}{N+\alpha_p}) \to (-\infty,0)$ is continuous. 
\end{lemma}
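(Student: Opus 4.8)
The plan is to establish continuity of each $\nu_j$ by combining the variational characterization (\ref{var-char-nu-j}) with the $C^1$-dependence of $\gamma \mapsto U_\gamma$ in $E$ from Proposition~\ref{implicit function for U-gamma} (extended to negative $\gamma$ via Remark~\ref{gamma-negative-definition}). The key point is that the Rayleigh quotient in (\ref{var-char-nu-j}) depends continuously on $\gamma$ in a suitably uniform way on the relevant test space, so the standard min-max stability argument applies. However, because the denominator weight $e^{-\gamma t}$ and the potential coefficient $e^{-t}|U_\gamma|^{p-2}$ are not bounded uniformly comparable over all of $H^1_0(I)$ as the test function varies, one must be slightly careful; the natural remedy is to exploit the exponential decay of eigenfunctions proved in Lemma~\ref{bounded-sol-exp-decay}.

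Concretely, I would proceed as follows. First, fix $\gamma_0 \in [0,\frac{N-2}{N+\alpha_p})$ and choose $\nu_\sdiamond < 0$ and $\gamma_\sdiamond$ so that all eigenvalues $\nu_j(\gamma)$, $j=1,\dots,K$, stay below $\nu_\sdiamond$ for $\gamma$ near $\gamma_0$ (possible since $\nu_K(\gamma_0)<0$ and we will get upper semicontinuity for free). Lemma~\ref{bounded-sol-exp-decay} then shows the corresponding eigenfunctions lie in the fixed space $C^2_\delta(I)$ with norm controlled by their $L^\infty$-norm, hence after normalization in a fixed bounded subset of $C^2_\delta(I)$, which embeds compactly into $C^1_0(I)$ and into the weighted $L^2$-space by Lemma~\ref{compactness-C-delta-spaces}. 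For upper semicontinuity of $\nu_j$ at $\gamma_0$: take the $j$-dimensional space $W$ spanned by the first $j$ eigenfunctions at $\gamma_0$ (all in $C^2_\delta(I) \subset H^1_0(I)$ with exponential decay), plug it into the min-max at a nearby $\gamma$, and use that on this finite-dimensional space spanned by exponentially decaying functions the quantities $\int_0^\infty e^{-\gamma t}\Psi'^2$, $\int_0^\infty e^{-\gamma t}\Psi^2$ and $\int_0^\infty (p-1)e^{-t}|U_\gamma|^{p-2}\Psi^2$ converge uniformly as $\gamma \to \gamma_0$ — the first two by dominated convergence using $e^{-\gamma t}\to e^{-\gamma_0 t}$ pointwise and the decay of $\Psi,\Psi'$, the third additionally using $\|U_\gamma - U_{\gamma_0}\|_{L^\infty}\to 0$ from Proposition~\ref{implicit function for U-gamma} and, when $p \in (2,3)$, the continuity of $s \mapsto |s|^{p-2}$ together with uniform bounds on $U_\gamma$. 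This gives $\limsup_{\gamma\to\gamma_0}\nu_j(\gamma) \le \nu_j(\gamma_0)$.

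For lower semicontinuity, take $\gamma_n \to \gamma_0$ and normalized eigenfunctions $\Psi_n^{(1)},\dots,\Psi_n^{(j)}$ at $\gamma_n$ associated with $\nu_1(\gamma_n),\dots,\nu_j(\gamma_n)$, mutually orthogonal in the $e^{-\gamma_n t}$-weighted $L^2$ inner product. By Lemma~\ref{bounded-sol-exp-decay} and Lemma~\ref{compactness-C-delta-spaces}, after passing to a subsequence each $\Psi_n^{(i)} \to \Psi_\infty^{(i)}$ in $C^1_0(I)$, and the limits are nonzero (their weighted $L^2$-norms converge to $1$), mutually weighted-$L^2(\gamma_0)$-orthogonal, and — passing to the limit in the weak formulation, which is legitimate thanks to the uniform exponential decay controlling the tails — each $\Psi_\infty^{(i)}$ solves (\ref{eq:trans-weighted-eigenvalue-problem}) with $\gamma=\gamma_0$ for the eigenvalue $\lim \nu_i(\gamma_n)$. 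Since eigenfunctions of this problem with distinct eigenvalues are orthogonal and those with the same eigenvalue would contradict simplicity (Proposition~\ref{sec:preliminaries-limit-problem-3}), the span of the $\Psi_\infty^{(i)}$ is $j$-dimensional, and inserting it into (\ref{var-char-nu-j}) at $\gamma_0$ yields $\nu_j(\gamma_0) \le \max_i \lim \nu_i(\gamma_n) = \lim \nu_j(\gamma_n)$; combined with the upper bound this forces $\nu_j(\gamma_n) \to \nu_j(\gamma_0)$.

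The main obstacle I expect is making the tail estimates genuinely uniform: one needs that the exponential decay rate $\delta$ from Lemma~\ref{bounded-sol-exp-decay} can be chosen uniformly for $\gamma$ in a neighborhood of $\gamma_0$ and for $\nu \le \nu_\sdiamond$, and that the a priori upper bound on the $\nu_j(\gamma)$ (needed to invoke that lemma before continuity is known) follows from the easy half of the min-max argument — i.e. upper semicontinuity must be extracted first, using only the fixed test space at $\gamma_0$, so that the hypothesis $\nu \le \nu_\sdiamond$ of Lemma~\ref{bounded-sol-exp-decay} is available locally before the full continuity is in hand. This bootstrapping ordering, plus handling the low-regularity case $p \in (2,3)$ of $s \mapsto |s|^{p-2}$ in the convergence of the potential terms, is the delicate part; everything else is routine min-max stability.
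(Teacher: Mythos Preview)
Your proposal is correct and follows essentially the same route as the paper: upper semicontinuity via a fixed $j$-dimensional test space of exponentially decaying eigenfunctions at $\gamma_0$, and lower semicontinuity via the uniform $C^2_\delta$-bounds of Lemma~\ref{bounded-sol-exp-decay} combined with the compact embedding of Lemma~\ref{compactness-C-delta-spaces} to extract convergent subsequences of eigenfunctions whose limits remain orthogonal. The only cosmetic difference is that the paper phrases the lower-semicontinuity step as a contradiction (obtaining $j$ eigenvalues strictly below $\nu_j(\gamma_0)$, against Proposition~\ref{sec:preliminaries-limit-problem-3}) rather than inserting the limiting span directly into the min--max, and it normalizes the eigenfunctions in $L^\infty$ rather than in weighted $L^2$; your bootstrapping remark about establishing the upper bound first so that Lemma~\ref{bounded-sol-exp-decay} applies is exactly the implicit ordering the paper uses.
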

\begin{proof}
Let $\gamma_0 \in [0,\frac{N-2}{N+\alpha_p})$, and let $(\gamma_n)_n \subset [0,\frac{N-2}{N+\alpha_p})$ be a sequence with $\gamma_n \to \gamma_0$. 
Recall that $U_{\gamma_n} \to  U_{\gamma_0}$ uniformly on $[0,\infty)$ as $n \to \infty$ by Proposition~\ref{implicit function for U-gamma}. We fix $j \in \{1,\ldots,K\}$ and consider the space $W \subset H^1_0(I)$ spanned by the first $j$ eigenfunctions of (\ref{eq:trans-weighted-eigenvalue-problem}) in the case $\gamma= \gamma_0$. Moreover, we let $\cM:= \{\Psi \in W\::\: \int_0^\infty \Psi^2dt =1\}$. Since $\nu_j(\gamma_0)<0$, $\cM$ is a compact subset of $C^2_\delta(I)$ for some $\delta>0$ by Lemma~\ref{bounded-sol-exp-decay}. From this we deduce that  
\begin{align*}
&\int_0^\infty \Bigl( e^{-\gamma_n t}{\Psi'}^2 -  (p-1)e^{-t}|U_{\gamma_n}|^{p-2}\Psi^2 \,\Bigr) dt
 \;\to \;  \int_0^\infty \Bigl(e^{-\gamma_0 t} {\Psi'}^2 -  (p-1)e^{-t}|U_0|^{p-2}\Psi^2\Bigr) \, dt \quad \text{and}\\
&\int_0^\infty e^{-\gamma_n t} \Psi^2 \, dt \;\to\; \int_0^\infty e^{-\gamma_0 t}\Psi^2 \, dt
\qquad \text{as $n \to \infty$ uniformly in $\psi \in \cM$,}
\end{align*}
and this implies that 
\begin{align*}
\limsup_{n \to \infty} \nu_j(\gamma_n) &\leq \limsup_{n \to \infty} \max_{\Psi \in \cM}
\frac{\int_0^\infty \Bigl( e^{-\gamma_n t}{\Psi'}^2 -  (p-1)e^{-t}|U_{\gamma_n}|^{p-2}\Psi^2\Bigr) \, dt}{\int_0^\infty e^{-\gamma_n t} \Psi^2 \, dt}\\
&= \max_{\Psi \in \cM}
\frac{\int_0^\infty \Bigl( e^{-\gamma_0 t} {\Psi'}^2 -  (p-1)e^{-t}|U_0|^{p-2}\Psi^2\Bigr) \, dt}{\int_0^\infty e^{-\gamma_0 t}\Psi^2 \, dt} = \nu_j(\gamma_0) .
\end{align*}
To show that $\liminf \limits_{n \to \infty} \nu_j(\gamma_n) \ge \nu_j(\gamma_0)$, we argue by contradiction and assume that, after passing to a subsequence,  we have 
\begin{equation} \label{lower eigenvalue convergence}
\nu_j(\gamma_n) \to \sigma_j < \nu_j(\gamma_0) .
\end{equation}
Passing again to a subsequence, we may then also assume that 
\begin{equation} 
\label{lower eigenvalue convergence-1}
\nu_k(\gamma_n) \to \sigma_k \le \sigma_j <0 \qquad \text{for $k=1,\dots,j$.}
\end{equation}
Let, for $k=1,\dots,j$, the function $\Psi_{k,n}$ denote an eigenfunction of (\ref{eq:trans-weighted-eigenvalue-problem}) corresponding to the eigenvalue $\nu_k(\gamma_n)$ such that $\|\Psi_{k,n}\|_{L^\infty(I)}=1$. Since eigenfunctions corresponding to different eigenvalues are orthogonal with respect to the weighted scalar product $(v,w) \mapsto \int_{0}^\infty e^{-\gamma_n t} vw\,dt$, we may assume that \begin{equation}
  \label{eq:proof-orthogonal}
\int_{0}^\infty e^{-\gamma_n t}\Psi_{k,n} \Psi_{\ell,n}\,dt = 
0 \qquad \text{for $k,\ell \in \{1,\dots,j\}$, $k \not = \ell.$}
\end{equation}
By Lemma~\ref{bounded-sol-exp-decay} and (\ref{lower eigenvalue convergence-1}), 
there exists $\delta>0$ such that $\|\Psi_{k,n}\|_{C^2_\delta} \le C$ for all $n \in \N$, $k \in \{1,\dots,j\}$. By Lemma~\ref{compactness-C-delta-spaces}, we may therefore pass to a subsequence again such that 
$$
\Psi_{k,n} \to \Psi_{k}  \qquad \text{uniformly in $I$,} 
$$
where $\Psi_k \in C^2_\delta(I)$ is a solution of 
\begin{equation}
  \label{eq:limit-eq-proof}
-(e^{\gamma_0 t}\Psi')' - (p-1)e^{-t}|U_0(t)|^{p-2}\Psi= \sigma_k e^{-\gamma_0 t}\Psi\qquad \text{in $I$},\qquad 
\Psi_k(0)=0
\end{equation}
for $k = 1,\dots,j$. Moreover, since the sequences $(\Psi_{k,n})_n$, $k = 1,\dots,j$ are uniformly bounded in $C^2_\delta(I)$, we may  
pass to the limit in (\ref{eq:proof-orthogonal}) to get that 
\begin{equation}
  \label{eq:proof-orthogonal-1}
\int_{0}^\infty e^{\gamma_0 t} \Psi_{k} \Psi_{\ell}\,dt = 
0 \qquad \text{for $k,\ell \in \{1,\dots,j\}$, $k \not = \ell.$}
\end{equation}
Consequently, for $\gamma=\gamma_0$, the problem (\ref{eq:trans-weighted-eigenvalue-problem}) has $j$ eigenvalues $\sigma_1,\dots,\sigma_j$ (counted with multiplicity) in $(-\infty,\nu_j(\gamma_0))$. This contradictions Proposition~\ref{sec:preliminaries-limit-problem-3}. The proof is finished.
\end{proof}

Next, we wish to derive some information on the derivative $\partial_\gamma \nu_j(\gamma)$ of the negative eigenvalues of (\ref{eq:trans-weighted-eigenvalue-problem}) as $\gamma \to 0^+$. We intend to derive this information via the implicit function theorem applied to the map $G: \left(-\eps_0, \frac{N-2}{N+\alpha_p}\right) \times \tilde E  \times \R  \to \tilde F \times \R$ defined by 
\begin{equation}
  \label{def-G-implicit}
G(\gamma, \Psi, \nu)=  \begin{pmatrix}
-\Psi''+\gamma \Psi' - (p-1)e^{(\gamma-1)t}|U_{\gamma}|^{p-2}\Psi - \nu \Psi \\
\int_0^\infty \Psi^2 \, dt -1,  
\end{pmatrix}
\end{equation}
Here, $\eps_0$ is given in Proposition~\ref{implicit function for U-gamma}, so that $(-\eps_0,\frac{N-2}{N+\alpha_p}) \to C^1_0(I)$, $\gamma \mapsto U_\gamma$ is a well defined $C^1$-map by Remark~\ref{gamma-negative-definition}. Moreover, $\tilde E$ and $\tilde F$ are suitable spaces of functions on $I$ chosen in a way that eigenfunctions and eigenvalues of \eqref{eq:trans-weighted-eigenvalue-problem-rewritten} and \eqref{eq:weighted-eigenvalue-translimit} correspond to zeros of this map. 
However, in the case $p \in (2,3]$, the function $|\cdot|^{p-2}$ is not differentiable at zero and therefore it is not a priori clear how $\tilde E$ and $\tilde F$ need to be chosen to guarantee that $G$ is of class $C^1$. 
In particular, spaces of continuous functions will not work in this case, so we need to introduce different function spaces. 

For $\delta>0$ and $1 \le r < \infty$, we let $L^r_\delta(I)$ denote the space of all functions 
$f \in L^r_{loc}(I)$ such that 
$$
\|f\|_{r,\delta} := \sup_{t \ge 0}e^{\delta t}[f]_{t,r} < \infty,\qquad \text{where}\quad
[f]_{t,r} : = \Bigl(\int_{t}^{t+1}|f(s)|^r\,ds\Bigr)^{\frac{1}{r}}= \|f\|_{L^r(t,t+1)}.
$$
The completeness of $L^r$-spaces readily implies that the spaces $L^r_\delta(I)$ are also Banach spaces. We will need the following observation: 
\begin{lemma}
\label{sec:case-2p3-1-lemma}  
Let $\delta>0$ and $f \in L^1_\delta(I)$. Then we have 
\begin{equation}
\int_{t}^\infty e^{\mu s} |f(s)|\,ds \le C_{\mu,\delta}\|f\|_{1,\delta}\: e^{(\mu-\delta) t} \qquad \text{for $\mu < \delta$, $t \ge 0$ with $C_{\mu,\delta}:= \frac{\max\{1,e^{\mu}\}}{1-e^{\mu-\delta}}$}  
\label{eq:L-1-delta-est}
\end{equation}
 and 
\begin{equation}
\int_{0}^t e^{\mu s} |f(s)|\,ds \le  D_{\delta,\mu}  \|f\|_{1,\delta}\: e^{(\mu-\delta)t}\qquad \text{for $\mu> \delta$, $t \ge 0$  with $D_{\delta,\mu}:=  \frac{e^{2\mu-\delta}}{e^{\mu-\delta}-1}$.}
\label{eq:L-1-delta-est-1}
\end{equation}
\end{lemma}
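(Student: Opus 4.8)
The plan is to prove both estimates \eqref{eq:L-1-delta-est} and \eqref{eq:L-1-delta-est-1} by a dyadic-type decomposition of the integration domain into unit intervals $[n,n+1)$, on each of which the quantity $[f]_{n,1}$ is controlled by $\|f\|_{1,\delta}e^{-\delta n}$, and then summing a geometric series. This is the standard trick for converting a "locally $L^1$ with exponentially decaying local norms" bound into a global weighted integral bound, and it is entirely routine; the only subtlety is keeping track of the correct constants $C_{\mu,\delta}$ and $D_{\delta,\mu}$, and being careful with whether $\mu<\delta$ (so the tail from $t$ to $\infty$ converges) or $\mu>\delta$ (so the head from $0$ to $t$ is the one that converges).

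For \eqref{eq:L-1-delta-est}: fix $t\ge 0$ and let $m=\lfloor t\rfloor$, so $t\in[m,m+1)$. First I would bound $\int_t^\infty e^{\mu s}|f(s)|\,ds \le \sum_{n=m}^\infty \int_n^{n+1} e^{\mu s}|f(s)|\,ds$. On $[n,n+1)$ one has $e^{\mu s}\le \max\{e^{\mu n}, e^{\mu(n+1)}\} = e^{\mu n}\max\{1,e^{\mu}\}$, hence $\int_n^{n+1}e^{\mu s}|f(s)|\,ds \le \max\{1,e^{\mu}\}\,e^{\mu n}[f]_{n,1}\le \max\{1,e^{\mu}\}\,e^{\mu n}\,e^{-\delta n}\|f\|_{1,\delta} = \max\{1,e^{\mu}\}\,\|f\|_{1,\delta}\,e^{(\mu-\delta)n}$. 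Summing over $n\ge m$ the geometric series with ratio $e^{\mu-\delta}<1$ gives $\max\{1,e^{\mu}\}\,\|f\|_{1,\delta}\,\frac{e^{(\mu-\delta)m}}{1-e^{\mu-\delta}}$, and since $m\le t$ and $\mu-\delta<0$ we have $e^{(\mu-\delta)m}\le e^{(\mu-\delta)t}$... wait, that inequality goes the wrong way since $m \le t$ and the exponent is negative makes $e^{(\mu-\delta)m} \ge e^{(\mu-\delta)t}$. The fix is to note $m > t-1$, so $e^{(\mu-\delta)m} \le e^{(\mu-\delta)(t-1)} = e^{\delta-\mu}e^{(\mu-\delta)t}$; folding the extra factor $e^{\delta-\mu}$ into the constant recovers exactly $C_{\mu,\delta}=\frac{\max\{1,e^{\mu}\}}{1-e^{\mu-\delta}}$ only if one is slightly more careful with the endpoints, so in practice I would instead estimate directly against $e^{\mu t}$ on the first interval $[t,m+1)$ and against $e^{\mu n}$ on subsequent ones, which yields the stated constant cleanly.

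For \eqref{eq:L-1-delta-est-1}, the argument is the mirror image: now $\mu>\delta$, so $e^{\mu-\delta}>1$ and the convergent geometric sum runs from $n=0$ up to $n=m=\lfloor t\rfloor$. One writes $\int_0^t e^{\mu s}|f(s)|\,ds \le \sum_{n=0}^{m}\int_n^{n+1}e^{\mu s}|f(s)|\,ds \le \max\{1,e^{\mu}\}\,\|f\|_{1,\delta}\sum_{n=0}^{m}e^{(\mu-\delta)n}$, and the finite geometric sum is $\frac{e^{(\mu-\delta)(m+1)}-1}{e^{\mu-\delta}-1}\le \frac{e^{(\mu-\delta)(m+1)}}{e^{\mu-\delta}-1}\le \frac{e^{(\mu-\delta)(t+1)}}{e^{\mu-\delta}-1}$ using $m\le t$. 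Absorbing the factor $e^{\mu-\delta}$ and $\max\{1,e^\mu\}$ (note $e^\mu\ge 1$ here since $\mu>\delta>0$, so $\max\{1,e^\mu\}=e^\mu$) into a single constant and simplifying gives the bound with $D_{\delta,\mu}=\frac{e^{2\mu-\delta}}{e^{\mu-\delta}-1}$.

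The main obstacle is purely bookkeeping: matching the precise constants $C_{\mu,\delta}$ and $D_{\delta,\mu}$ claimed in the statement requires choosing the decomposition and the per-interval bound in exactly the right way (e.g.\ splitting off the partial interval $[t,\lceil t\rceil)$ separately, or using $n\le s\le n+1 \Rightarrow e^{\mu s}\le e^{\mu}e^{\mu n}$ uniformly rather than the sharper two-sided bound), and then verifying the geometric-series arithmetic. There is no conceptual difficulty, and in fact the constants are not used quantitatively anywhere later, so a proof establishing the inequalities with \emph{some} finite constants $C_{\mu,\delta}$, $D_{\delta,\mu}$ depending only on $\mu,\delta$ would suffice for all subsequent applications; I would present the clean dyadic estimate and remark that tracking constants gives the stated values.
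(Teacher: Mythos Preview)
Your approach is essentially the same as the paper's: decompose into unit intervals, bound $e^{\mu s}$ and $[f]_{n,1}$ on each, and sum a geometric series. The one difference worth noting is that for \eqref{eq:L-1-delta-est} the paper slices $[t,\infty)$ into intervals $[t+\ell,\,t+\ell+1]$ starting at $t$ itself rather than at $\lfloor t\rfloor$; since the definition of $\|\cdot\|_{1,\delta}$ uses $\sup_{t\ge 0}$ over all real $t$, one has $[f]_{t+\ell,1}\le e^{-\delta(t+\ell)}\|f\|_{1,\delta}$ directly, and the geometric sum $\sum_{\ell\ge 0}e^{(\mu-\delta)(t+\ell)}$ gives exactly $C_{\mu,\delta}e^{(\mu-\delta)t}$ with no floor-function bookkeeping. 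This sidesteps precisely the endpoint issue you flagged.
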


\begin{proof}
Let $f \in L^1_\delta(I)$ and $t \ge 0$. If $\mu<\delta$, we have 
\begin{align}
&\int_{t}^\infty e^{\mu s} |f(s)|\,ds = \sum_{\ell= 0}^\infty \int_{t+\ell}^{t+\ell+ 1}e^{\mu s} |f(s)|\,ds \le \max\{1,e^\mu\} \sum_{\ell= 0}^\infty e^{\mu (t+\ell)}[f]_{t+\ell,1}\nonumber\\
&\le \max\{1,e^\mu\}\|f\|_{1,\delta}  \sum_{\ell= 0}^\infty e^{(\mu-\delta)(t+ \ell)}\nonumber = C_{\mu,\delta}\|f\|_{1,\delta}\, e^{(\mu-\delta) t},\nonumber
\end{align}
and in the case $\mu>\delta$ we have 
\begin{align}
&\int_{0}^t e^{\mu s} |f(s)|\,ds \le  \sum_{\ell= 0}^{\lfloor t \rfloor} \int_{\ell}^{\ell+ 1}e^{\mu s} |f(s)|\,ds \le \sum_{\ell= 0}^{\lfloor t \rfloor} e^{\mu (\ell+1)}[f]_{\ell,1}\nonumber\\
&\le e^\mu \|f\|_{1,\delta}  \sum_{\ell= 0}^{\lfloor t \rfloor} e^{(\mu-\delta)\ell}= e^\mu \|f\|_{1,\delta} \frac{e^{(\mu-\delta)(\lfloor t \rfloor+1)}-1}{e^{\mu-\delta}-1} \le D_{\delta,\mu}  \|f\|_{1,\delta}\, e^{(\mu-\delta)t} \nonumber
\end{align}
with $C_{\mu,\delta}$ and $D_{\delta,\mu}$ given above.
\end{proof}

Next, for $\delta>0$, we define the function space  
$$
W^{2}_{\delta}(I)  := \left\{ u \in C_\delta^{1}(\overline I) \cap W^{2,1}_{loc}(\ov I)\:: \: u(0)= 0,\: u'' \in L^1_\delta(I) \right\}
$$
and endow this space with the norm 
$$
\|u\|_{W^{2}_\delta} := \|u\|_{C_\delta^1} + \|u''\|_{1,\delta}
$$
We first note that 
$$
u'(t)= -\int_{t}^\infty u''(s)\,ds \qquad \text{for $u \in W^{2}_\delta(I)$ and $t \ge 0$.}
$$

\begin{lemma}
\label{sec:case-2p3-2-W-banach}
$W^{2}_{\delta}(I)$ is a Banach space.  
\end{lemma}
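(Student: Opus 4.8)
The plan is to verify completeness directly, using that $C_\delta^1(\overline I)$ and $L^1_\delta(I)$ are already known to be Banach spaces. First I would check that $W^{2}_{\delta}(I)$ is a normed vector space: the defining conditions $u(0)=0$ and $u''\in L^1_\delta(I)$ are linear, and $\|\cdot\|_{W^{2}_{\delta}}$ is a genuine norm because its first summand $\|u\|_{C_\delta^1}$ already vanishes only for $u\equiv 0$.

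Given a Cauchy sequence $(u_n)_n$ in $W^{2}_{\delta}(I)$, it is in particular Cauchy in $C_\delta^1(\overline I)$, and $(u_n'')_n$ is Cauchy in $L^1_\delta(I)$. Completeness of these two spaces then produces $u\in C_\delta^1(\overline I)$ with $u_n\to u$ in $C_\delta^1$, and $g\in L^1_\delta(I)$ with $u_n''\to g$ in $L^1_\delta(I)$. Since convergence in $C_\delta^1(\overline I)$ entails uniform convergence of the functions and of their first derivatives on $\overline I$, one immediately gets $u(0)=\lim_n u_n(0)=0$, and it remains only to identify $g$ as the second weak derivative of $u$, i.e. to show $u\in W^{2,1}_{loc}(\overline I)$ with $u''=g$.

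For the identification step I would observe that, splitting $(0,T)$ into unit subintervals, $\|f\|_{L^1(0,T)}\le C_T\|f\|_{1,\delta}$ for every $f\in L^1_\delta(I)$, so that $u_n''\to g$ in $L^1_{loc}(I)$. Passing to the limit in the relation $u_n'(t)-u_n'(s)=\int_s^t u_n''(\sigma)\,d\sigma$ for $0\le s\le t$ — with uniform convergence $u_n'\to u'$ on the left-hand side and $L^1(s,t)$-convergence on the right — yields $u'(t)-u'(s)=\int_s^t g(\sigma)\,d\sigma$. Hence $u'$ is locally absolutely continuous with $u''=g$ a.e., so $u\in W^{2,1}_{loc}(\overline I)$ and $u''=g\in L^1_\delta(I)$, which shows $u\in W^{2}_{\delta}(I)$. (Alternatively, one may pass to the limit in $u_n'(t)=-\int_t^\infty u_n''(\sigma)\,d\sigma$ using Lemma~\ref{sec:case-2p3-1-lemma} with $\mu=0$ to justify $\int_t^\infty u_n''(\sigma)\,d\sigma\to\int_t^\infty g(\sigma)\,d\sigma$, which moreover gives the formula $u'(t)=-\int_t^\infty g(\sigma)\,d\sigma$.) Finally, $\|u_n-u\|_{W^{2}_{\delta}}=\|u_n-u\|_{C_\delta^1}+\|u_n''-g\|_{1,\delta}\to 0$, so $(u_n)_n$ converges in $W^{2}_{\delta}(I)$ and the space is complete.

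The argument is essentially routine; the only mildly delicate point is the last identification, where one needs that convergence in the $\|\cdot\|_{1,\delta}$-norm is strong enough to carry the fundamental-theorem-of-calculus relation over to the limit. This is exactly where the built-in local integrability control of $L^1_\delta(I)$ — equivalently, Lemma~\ref{sec:case-2p3-1-lemma} — is used.
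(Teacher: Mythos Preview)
Your proof is correct and essentially matches the paper's argument: both take limits in $C_\delta^1(\overline I)$ and $L^1_\delta(I)$ separately and then identify the $L^1_\delta$-limit as the weak second derivative of the $C_\delta^1$-limit. The only cosmetic difference is that the paper carries out the identification via the representation $u_n'(t)=-\int_t^\infty u_n''(s)\,ds$ and the estimate~\eqref{eq:L-1-delta-est} (i.e.\ Lemma~\ref{sec:case-2p3-1-lemma} with $\mu=0$), which is precisely the alternative you mention; your primary route through finite intervals $[s,t]$ and $L^1_{loc}$-convergence is equally valid and arguably more elementary.
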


\begin{proof}
Consider a Cauchy sequence $(u_n)_n$ in $W^2_\delta(I)$. Then we have 
\begin{equation}
  \label{eq:Banach-space-W-0}
u_n \to u \quad \text{in $C_\delta^{1}(\overline I)$}\qquad \text{and}\qquad 
u_n'' \to v \quad \text{in $L^1_\delta(I)$.}
\end{equation}
Moreover, we have 
\begin{equation}
  \label{eq:Banach-space-W}
u'(t)=\lim_{n \to \infty}u_n'(t) = -\lim_{n \to \infty}\int_{t}^\infty u_n''(s)ds=- \int_{t}^\infty v(s)\,ds \qquad \text{for all $t >0$,}
\end{equation}
since 
$$
\int_{t}^\infty |u_n''(s)-v(s)|\,ds \le C_{0,\delta}\|u'' - v\|_{1,\delta}\, 
e^{-\delta t} \to 0  \qquad \text{as $n \to \infty$}   
$$
by (\ref{eq:L-1-delta-est}). From (\ref{eq:Banach-space-W}) we deduce that $u'' = v \in L^1_\delta(I)$ in weak sense. Then it follows from (\ref{eq:Banach-space-W-0}) that $u_n \to u$ in $W^2_\delta(I)$.
\end{proof}

The following simple lemma is essential.

\begin{lemma} \label{isomorphism-p-2-3}
	Let $\delta, \gamma, \mu\ge 0$ satisfy $\delta <\sqrt{\frac{\gamma^2}{4} + \mu^2} -\frac{\gamma}{2}$. Then the map $W^{2}_\delta(I) \to L^1_\delta(I)$, $T\Psi = -\Psi'' + \gamma \Psi' + \mu^2 \Psi$ is an isomorphism.
\end{lemma}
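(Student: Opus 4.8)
My plan is to invert $T$ explicitly by variation of parameters and to read off the required bounds from Lemma~\ref{sec:case-2p3-1-lemma}. Write $\rho := \sqrt{\frac{\gamma^2}{4}+\mu^2}$ and let $r_\pm := \frac{\gamma}{2}\pm\rho$ be the two roots of $r^2-\gamma r -\mu^2=0$, so that $T[e^{r_\pm t}]=0$. The hypothesis $\delta<\rho-\frac{\gamma}{2}$ forces $\mu>0$, hence $\rho>0$; moreover it says exactly that $\delta<-r_-$, and in particular $r_+>0>r_-$, with $e^{r_-t}$ decaying strictly faster than $e^{-\delta t}$. Injectivity is then immediate: if $\Psi\in W^2_\delta(I)$ and $T\Psi=0$, then $\Psi''=\gamma\Psi'+\mu^2\Psi$ is continuous, a bootstrap gives $\Psi\in C^\infty$, so $\Psi=Ae^{r_+t}+Be^{r_-t}$; since $\Psi$ is bounded and $r_+>0$ we get $A=0$, and $\Psi(0)=0$ then forces $B=0$.

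For surjectivity with a bounded inverse, given $f\in L^1_\delta(I)$ I would set
$$
\Psi(t):= \frac{1}{2\rho}\int_t^\infty e^{r_+(t-s)}f(s)\,ds + \frac{1}{2\rho}\int_0^t e^{r_-(t-s)}f(s)\,ds - \frac{e^{r_-t}}{2\rho}\int_0^\infty e^{-r_+s}f(s)\,ds,
$$
the point being that the unstable mode $r_+$ is integrated towards $+\infty$ and the stable mode $r_-$ from $0$, and the last (homogeneous) term is added to arrange $\Psi(0)=0$. Lemma~\ref{sec:case-2p3-1-lemma} then gives the needed decay: $\int_t^\infty e^{-r_+s}|f(s)|\,ds\le C e^{(-r_+-\delta)t}\|f\|_{1,\delta}$ by \eqref{eq:L-1-delta-est} (applicable since $-r_+<\delta$) and $\int_0^t e^{-r_-s}|f(s)|\,ds\le C e^{(-r_--\delta)t}\|f\|_{1,\delta}$ by \eqref{eq:L-1-delta-est-1} (applicable since $-r_->\delta$ --- this is where the hypothesis enters), so multiplying by $e^{r_+t}$, resp.\ $e^{r_-t}$, yields $|\Psi(t)|\le C\|f\|_{1,\delta}e^{-\delta t}$. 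Differentiating, the two boundary terms $\pm\frac{1}{2\rho}f(t)$ produced by the variable limits cancel, so $\Psi'$ is continuous and obeys the same estimate $|\Psi'(t)|\le C\|f\|_{1,\delta}e^{-\delta t}$; differentiating once more these $f$-terms instead reinforce with total coefficient $\frac{r_+-r_-}{2\rho}=1$, so that $\Psi''=h+f$ with $|h(t)|\le C\|f\|_{1,\delta}e^{-\delta t}$, whence $\Psi''\in L^1_\delta(I)$ with $\|\Psi''\|_{1,\delta}\le C\|f\|_{1,\delta}$. A direct substitution --- using that $r_\pm$ annihilate $r^2-\gamma r-\mu^2$ --- confirms $-\Psi''+\gamma\Psi'+\mu^2\Psi=f$. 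Hence $\Psi\in W^2_\delta(I)$, $T\Psi=f$ and $\|\Psi\|_{W^2_\delta}\le C\|f\|_{1,\delta}$.

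Finally, $T$ is bounded ($\|T\Psi\|_{1,\delta}\le\|\Psi''\|_{1,\delta}+\gamma\|\Psi'\|_{1,\delta}+\mu^2\|\Psi\|_{1,\delta}$, using the trivial embedding $C_\delta(I)\hookrightarrow L^1_\delta(I)$) and acts between Banach spaces by Lemma~\ref{sec:case-2p3-2-W-banach}; having shown it is bijective with $\|\Psi\|_{W^2_\delta}\le C\|f\|_{1,\delta}$ for the solution $\Psi$ of $T\Psi=f$, we conclude that $T$ is an isomorphism, with the bound on the inverse built into the construction (so the open mapping theorem is not even needed). The only genuinely delicate step is the bookkeeping in the surjectivity part: one must choose the integration limits so that each mode is estimated against the favourable inequality of Lemma~\ref{sec:case-2p3-1-lemma}, and the construction hinges on recognizing that the hypothesis is precisely $\delta<-r_-$, i.e.\ exactly the threshold at which \eqref{eq:L-1-delta-est-1} applies to the $e^{r_-t}$-contribution; everything else is routine exponential estimation.
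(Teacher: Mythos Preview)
Your proof is correct and follows essentially the same approach as the paper: both solve the homogeneous equation to establish injectivity via the boundedness constraint and $\Psi(0)=0$, then build the inverse by variation of parameters with the unstable mode integrated toward $+\infty$ and the stable mode from $0$, invoking the two estimates of Lemma~\ref{sec:case-2p3-1-lemma} exactly as you do. The only differences are cosmetic: the paper writes the roots as $\frac{\gamma}{2}\pm\lambda$ and adds the homogeneous correction for $\Psi(0)=0$ only at the end (``by adding a multiple of $t\mapsto e^{(\gamma/2-\lambda)t}$''), whereas you include it explicitly from the start; and the paper recovers $\Psi''\in L^1_\delta$ directly from the equation $\Psi''=\gamma\Psi'+\mu^2\Psi-f$ rather than differentiating again (note your formula should read $\Psi''=h-f$, not $h+f$, though this sign does not affect the estimate).
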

\begin{proof}
Let $\lambda:= \sqrt{\frac{\gamma^2}{4} + \mu^2}$.
Any solution of the equation $-\Psi'' + \gamma \Psi' +\mu^2 \Psi=0$ is given by $\Psi(t)=Ae^{(\frac{\gamma}{2}-\lambda) t} + B e^{(\frac{\gamma}{2} + \lambda)  t}$ with suitable $A,B \in \R$. If $\Psi \in W^2_\delta(I)$, then $\Psi$ is bounded and therefore $B=0$. Moreover, $A=0$ since $\Psi(0)=0$, and therefore $\Psi \equiv 0$. Hence $T$ has zero kernel.

For $g \in L^1_\delta(I)$, a solution of $-\Psi'' + \gamma \Psi' + \mu^2 \Psi = g$ is given by
	$$
	\Psi(t)=\frac{1}{2 \lambda}e^{(\frac{\gamma}{2}+\lambda) t}\int_t^\infty e^{-(\frac{\gamma}{2}+\lambda) s}g(s) \, ds 
	+  \frac{1}{2 \lambda}e^{ (\frac{\gamma}{2}-\lambda) t}\int_0^t e^{(-\frac{\gamma}{2}+\lambda) s}g(s) \, ds .
	$$
 By (\ref{eq:L-1-delta-est}) and (\ref{eq:L-1-delta-est-1}), we have 
\begin{align*}
	\Bigl| e^{(\frac{\gamma}{2}+\lambda) t}\int_t^\infty e^{-(\frac{\gamma}{2}+\lambda) s}g(s) \, ds  \Bigr| &  \leq 
C_{-(\frac{\gamma}{2}+\lambda),\delta}  \|g\|_{1,\delta}\: e^{-\delta t} , \\
	\Bigl| e^{ (\frac{\gamma}{2}-\lambda) t}\int_0^t e^{(-\frac{\gamma}{2}+\lambda) s}g(s) \, ds \Bigr| & \leq 
D_{-\frac{\gamma}{2}+\lambda,\delta}  \|g\|_{1,\delta}\: e^{-\delta t} 
\end{align*}
for $t \ge 0$. Hence $\Psi \in C_\delta(I)$. Since 
        \begin{equation}
          \label{eq:psi-prim-rep}
        \Psi'(t)= \frac{\frac{\gamma}{2}+\lambda}{2 \lambda}e^{(\frac{\gamma}{2}+\lambda) t}\int_t^\infty e^{-(\frac{\gamma}{2}+\lambda) s}g(s) \, ds  
        + \frac{\frac{\gamma}{2}-\lambda}{2 \lambda}e^{ (\frac{\gamma}{2}-\lambda) t}\int_0^t e^{(-\frac{\gamma}{2}+\lambda) s}g(s) \, ds
	\end{equation}
it also follows that $\Psi' \in C_\delta(I)$. Additionally, we have 
$\Psi''=\mu^2 \Psi + \gamma \Psi' -g \in L^1_\delta$. By adding a multiple of the function $t \mapsto e^{(\frac{\gamma}{2}-\lambda) t}$, we can ensure that $\Psi(0)=0$ and therefore $\Psi \in W^2_\delta(I)$. We conclude that $T$ is an isomorphism.	
\end{proof}

From now on, we fix $\gamma_\sdiamond \in (0,\frac{N-2}{N+\alpha_p})$, By Proposition~\ref{sec:preliminaries-limit-problem-3} and Lemma~\ref{eq:gamma-sufficient}, we have
\begin{equation}
  \label{eq:def-gamma-max}
\nu_\sdiamond:= \sup_{0 \le \gamma \le \gamma_\sdiamond}\nu_K(\gamma)<0.  
\end{equation}
Moreover, we fix  
\begin{equation}
  \label{eq:fix-delta-spectral}
\delta := \min \left\{\frac{\sqrt{1-2\nu_\sdiamond}-1}{2}\:,\:\frac{1}{2}\Bigl(\sqrt{\frac{\gamma_\sdiamond^2}{4} - \nu_\sdiamond} -\frac{\gamma_\sdiamond}{2}\Bigr)\:,\:\frac{2}{N}\right\} 
\end{equation}
for the remainder of this section. By Lemma~\ref{bounded-sol-exp-decay} and since $\delta \le \frac{1}{2}\bigl(\sqrt{1-2\nu_\sdiamond}-1\bigr)$, there exists $C>0$ such that 
\begin{equation}
  \label{eq:fix-delta-spectral-consequence}
\|\Psi\|_{C^2_\delta(I)} \le C \|\Psi\|_{L^\infty(I)} 
\end{equation}
for every eigenfunction of (\ref{eq:trans-weighted-eigenvalue-problem-rewritten}) corresponding to $\gamma \in [0,\gamma_\sdiamond]$ and $\nu = \nu_j(\gamma)$, $j=1,\dots,k$.   

We consider the spaces $E_\delta:= W^{2}_\delta(I)$ and $F_\delta := L^1_\delta(I)$. The key observation of this section is the following. 
\begin{proposition}
\label{G-differentiable-p-2-3}
Let $\eps_0>0$ be given by Proposition~\ref{implicit function for U-gamma}, so that $(-\eps_0,\gamma_\sdiamond) \to C^1_0(I)$, $\gamma \mapsto U_\gamma$ is a well defined $C^1$-map by Remark~\ref{gamma-negative-definition}. Moreover, let the map 
  $$
G: \left(-\eps_0,\gamma_\sdiamond \right) \times {E_\delta}  \times \R  \to {F_\delta} \times \R 
$$
be defined by (\ref{def-G-implicit}). Then $G$ is of class $C^1$ with 
\begin{align*} 
&\partial_{\gamma}G(\gamma,\Psi,\nu) = 
\begin{pmatrix}
\Psi'   - (p-1)e^{(\gamma-1)t}|U_\gamma|^{p-2} \Bigl(t + (p-2)\frac{U_\gamma \partial_\gamma U_\gamma}{|U_\gamma|^2}\Bigr)\Psi\\
0
\end{pmatrix},
\quad \partial_{\nu}G(\gamma,\Psi,\nu)= 
\begin{pmatrix}
 -\Psi \\
0
\end{pmatrix}\\
&\text{and}\qquad d_{\Psi}G(\gamma,\Psi,\nu)\phi = 
\begin{pmatrix}
-\phi''  +\gamma \phi' - (p-1)e^{(\gamma-1)t}|U_\gamma|^{p-2}\phi - \nu \phi \\
\int_0^\infty \Psi \phi \, dt
\end{pmatrix} \quad \text{in $F_\delta \times \R$}
\end{align*}
for $\phi \in E_\delta$. 
\end{proposition}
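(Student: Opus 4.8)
The plan is to write $G=(G_1,G_2)$, to peel off all the elementary pieces, and to concentrate the difficulty in a single composition operator. Decompose
\[
G_1(\gamma,\Psi,\nu)=-\Psi''+\gamma\Psi'-\nu\Psi-N(\gamma,\Psi),\qquad N(\gamma,\Psi):=(p-1)\,e^{(\gamma-1)t}\,|U_\gamma|^{p-2}\,\Psi,
\]
and recall $G_2(\gamma,\Psi,\nu)=\int_0^\infty\Psi^2\,dt-1$. By the very definition of the norms, $\Psi\mapsto-\Psi''$ is a bounded linear map $E_\delta=W^{2}_\delta(I)\to F_\delta=L^1_\delta(I)$. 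Since $\delta>0$, the inclusions $E_\delta\hookrightarrow C^1_\delta(I)\hookrightarrow C_\delta(I)\hookrightarrow L^1_\delta(I)$ and $C_\delta(I)\hookrightarrow L^2(I)$ are continuous, so $(\gamma,\Psi)\mapsto\gamma\Psi'$ and $(\nu,\Psi)\mapsto\nu\Psi$ are continuous bilinear maps into $F_\delta$ and $\Psi\mapsto\int_0^\infty\Psi^2\,dt$ is a continuous quadratic form; these are therefore of class $C^\infty$ and account for the terms $\Psi'$, $-\Psi$ and the second components in the asserted derivative formulas. Hence everything is reduced to showing that $N : (-\eps_0,\gamma_\sdiamond)\times E_\delta\to F_\delta$ is of class $C^1$ with $d_\Psi N(\gamma,\Psi)\phi=(p-1)e^{(\gamma-1)t}|U_\gamma|^{p-2}\phi$ and $\partial_\gamma N(\gamma,\Psi)=(p-1)\bigl(t\,e^{(\gamma-1)t}|U_\gamma|^{p-2}+(p-2)e^{(\gamma-1)t}|U_\gamma|^{p-4}U_\gamma\,\partial_\gamma U_\gamma\bigr)\Psi$.

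For this I would factor $N(\gamma,\Psi)=(p-1)\,A(\gamma)\,Q(\gamma)\,\Psi$ with $A(\gamma):=e^{(\gamma-1)\,\cdot\,}$ and $Q(\gamma):=|U_\gamma|^{p-2}$, and fix an auxiliary exponent $\delta'$ with $\delta<\delta'<1-\gamma_\sdiamond$ — possible since $\gamma_\sdiamond<\tfrac{N-2}{N}$ forces $1-\gamma_\sdiamond>\tfrac2N\ge\delta$. The map $\gamma\mapsto A(\gamma)$ is smooth from $(-\eps_0,\gamma_\sdiamond)$ into $C_{\delta'}(I)$ (its $\gamma$-derivative being $t\,e^{(\gamma-1)\,\cdot\,}$, which still lies in $C_{\delta'}(I)$), and $\gamma\mapsto U_\gamma\in C^1_0(I)$ is of class $C^1$ by Remark~\ref{gamma-negative-definition}. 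Since $\|U_\gamma\|_{L^\infty(I)}$ is locally bounded in $\gamma$ by Proposition~\ref{implicit function for U-gamma}, $Q(\gamma)\in L^1_0(I)$ uniformly over $\gamma$ on compacts, and multiplication is continuous bilinear as a map $C_{\delta'}(I)\times L^1_0(I)\to L^1_{\delta'}(I)\hookrightarrow L^1_\delta(I)$ and as a map $L^1_\delta(I)\times C_0(I)\to L^1_\delta(I)$, with $E_\delta\hookrightarrow C_0(I)$ continuously. By the chain rule, the whole matter is thus reduced to showing that $\gamma\mapsto Q(\gamma)=|U_\gamma|^{p-2}$ is a $C^1$-map into $L^1_0(I)$, i.e. to the differentiability of the composition map $U\mapsto|U|^{p-2}$ along the curve $\gamma\mapsto U_\gamma$.

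This last step is the crux, and it is exactly why $F_\delta$ is taken to be a weighted $L^1$-space: when $p\in(2,3]$ the formal derivative $W\mapsto(p-2)|U|^{p-4}U\,W$ blows up like $|t-t_i|^{p-3}$ near a zero $t_i$ of $U$, so no differentiability can hold in a $C^0$-topology. The point that saves us is that each $U_\gamma$ is a solution of (\ref{eq:U-gamma}), hence has no double zeros, and being the continuation of $U_0$ it has only finitely many zeros ($t=0$ together with the $K-1$ zeros in $(0,\infty)$), all of them simple, with $\lim_{t\to\infty}U_\gamma\neq0$ by Lemma~\ref{bounded-sol-E}; thus the curve $\gamma\mapsto U_\gamma$ stays in the class $\mathcal{A}$ of $C^1$-functions on $\overline I$ having a nonzero limit at infinity and only finitely many, all simple, zeros. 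On $\mathcal{A}$ the singularity $|t-t_i|^{p-3}$ is integrable precisely because $p>2$, which makes $U\mapsto|U|^{p-2}$ a $C^1$-map once the target carries a weighted $L^1$-norm. I would therefore invoke the differentiability result proved in Section~\ref{sec:differentiability-g} — which supplies exactly this, together with the derivative $W\mapsto(p-2)|U|^{p-4}U\,W$ — and combine it with the chain rule, the smoothness of $\gamma\mapsto A(\gamma)$, the $C^1$-dependence $\gamma\mapsto U_\gamma$, and the elementary weighted estimates (in the spirit of Lemma~\ref{sec:case-2p3-1-lemma}) to conclude that $Q$, hence $N$, hence $G$, is of class $C^1$. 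The stated formulas for $\partial_\gamma G$, $\partial_\nu G$ and $d_\Psi G$ then follow from the product and chain rules together with the trivial computation of the derivative of the quadratic constraint. The principal obstacle is thus entirely the differentiability of $U\mapsto|U|^{p-2}$ in the range $p\in(2,3]$, which is deferred to Section~\ref{sec:differentiability-g}; granted that, the remainder is careful bookkeeping with weighted norms.
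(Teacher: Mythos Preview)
Your proposal is correct and follows essentially the same route as the paper: you isolate the only nontrivial term $N(\gamma,\Psi)=(p-1)e^{(\gamma-1)t}|U_\gamma|^{p-2}\Psi$, factor it so that the sole delicate ingredient is the map $U\mapsto|U|^{p-2}$ from the class of $C^1$-functions with finitely many simple zeros into $L^1_0(I)$, and invoke the lemma of Section~\ref{sec:differentiability-g} for that step. The paper organizes the same reduction as a composition $H_3\circ H_2\circ H_1$ (with $H_2$ encoding $v\mapsto|v|^{p-2}$ and $H_3$ the weighted multiplication), whereas you write it as a product $A(\gamma)Q(\gamma)\Psi$ and introduce an auxiliary exponent $\delta'\in(\delta,1-\gamma_\sdiamond)$ to make $\gamma\mapsto e^{(\gamma-1)\,\cdot\,}$ a $C^1$-curve in $C_{\delta'}(I)$; this is a minor notational variation and both arguments rest on the same key lemma.
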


We postpone the somewhat lengthy proof of this proposition to the next section and continue the main argument first. We fix $j \in \{1, \ldots, K\}$ and for $\gamma \ge 0$ we let $\Psi_{\gamma,j}$ denote an eigenfunction of the eigenvalue problem \eqref{eq:trans-weighted-eigenvalue-problem-rewritten} corresponding to the eigenvalue $\nu_j(\gamma)$. We thus have  
$$
-\Psi_{\gamma,j}'' + \gamma \Psi_{\gamma,j}' - (p-1)e^{(\gamma-1)t}|U_\gamma(t)|^{p-2}\Psi_{\gamma,j}= 
\nu_j(\gamma) \Psi_{\gamma,j} \; \text{in $[0,\infty)$,} \quad \Psi_{\gamma,j}(0)=0,\: \Psi_{\gamma,j} \in L^\infty(I).
$$
By (\ref{eq:fix-delta-spectral-consequence}) we have $\Psi_{\gamma,j} \in {E_\delta}$. Moreover, we can assume $\int_0^\infty \Psi_{\gamma,j}^2 \, dt=1$ so that 
$$
G(\gamma,\Psi_{\gamma,j},\nu_j(\gamma))=0.
$$ 

To apply the implicit function theorem to $G$ at the point $(\gamma,\Psi_{\gamma,j},\nu_j(\gamma))$, we need the following property.

\begin{proposition}
\label{deriv-isomorphism}
	Let $\gamma \in [0,\gamma_\sdiamond]$. Then the map 
\begin{align*} 
& L:=d_{\Psi,\nu}G(\gamma,\Psi_{\gamma,j},\nu_j(\gamma)): {E_\delta} \times \R  \to {F_\delta} \times \R \\ 
& (\phi,\rho)  \mapsto  \begin{pmatrix}
-\phi''  + \gamma \phi' - (p-1)e^{(\gamma-1)t}|U_\gamma|^{p-2}\phi - \nu_j(\gamma) \phi - \rho \Psi_{\gamma,j} \\
\int_0^\infty \Psi_{\gamma,j} \phi \, dt
\end{pmatrix}
\end{align*}
is an isomorphism.
\end{proposition}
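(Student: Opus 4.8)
## Proof strategy for Proposition \ref{deriv-isomorphism}

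The plan is to show that $L = d_{\Psi,\nu}G(\gamma,\Psi_{\gamma,j},\nu_j(\gamma))$ is an isomorphism from $E_\delta \times \R$ to $F_\delta \times \R$ by proving it is a Fredholm operator of index zero with trivial kernel. First I would decompose $L$ by writing the top component as $T\phi - (p-1)e^{(\gamma-1)t}|U_\gamma|^{p-2}\phi - \nu_j(\gamma)\phi - \rho\Psi_{\gamma,j}$, where $T\phi = -\phi'' + \gamma\phi'$. Choosing $\mu = 0$ in Lemma~\ref{isomorphism-p-2-3} (the condition $\delta < \sqrt{\gamma^2/4} - \gamma/2 = 0$ fails for $\gamma>0$, so instead) I would use $-\phi'' + \gamma\phi' - \nu_j(\gamma)\phi$, i.e. apply Lemma~\ref{isomorphism-p-2-3} with $\mu^2 = -\nu_j(\gamma) > 0$; the required bound $\delta < \sqrt{\gamma^2/4 - \nu_j(\gamma)} - \gamma/2$ holds because $\nu_j(\gamma) \le \nu_\sdiamond$ and $\delta \le \tfrac12(\sqrt{\gamma_\sdiamond^2/4 - \nu_\sdiamond} - \gamma_\sdiamond/2)$ by (\ref{eq:fix-delta-spectral}), together with monotonicity of $x \mapsto \sqrt{x^2/4 - \nu_\sdiamond} - x/2$ in the relevant range. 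Thus $\phi \mapsto (-\phi'' + \gamma\phi' - \nu_j(\gamma)\phi,\; 0)$, combined with the scalar component, gives an isomorphism $E_\delta \times \R \to F_\delta \times \R$ (the $\R$-factor handled trivially since the scalar component only sees $\phi$, but one checks the full $2\times 2$ block structure is upper/lower triangular with isomorphism on the diagonal).

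Next I would show $L$ is a compact perturbation of this isomorphism. The extra terms are $(\phi,\rho) \mapsto (-(p-1)e^{(\gamma-1)t}|U_\gamma|^{p-2}\phi - \rho\Psi_{\gamma,j},\; \int_0^\infty \Psi_{\gamma,j}\phi\,dt)$. Compactness of $\phi \mapsto \int_0^\infty \Psi_{\gamma,j}\phi\,dt$ into $\R$ is clear since it is a bounded linear functional with finite-dimensional range. The map $\rho \mapsto (-\rho\Psi_{\gamma,j}, 0)$ has one-dimensional range, hence compact. For $\phi \mapsto -(p-1)e^{(\gamma-1)t}|U_\gamma|^{p-2}\phi$: since $E_\delta \embed C_\delta(\ov I)$ compactly (by an Arzel\`a--Ascoli-type argument as in Lemma~\ref{compactness-C-delta-spaces}, using control of $\phi'$), and multiplication by the bounded function $e^{(\gamma-1)t}|U_\gamma|^{p-2}$ maps $C_\delta(\ov I)$ continuously into $F_\delta = L^1_\delta(I)$ (indeed into $C_\delta(\ov I) \subset L^1_\delta(I)$), the composition is compact. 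Therefore $L$ is Fredholm of index zero.

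It then remains to verify that $\ker L = \{0\}$. Suppose $L(\phi,\rho) = 0$. The top equation says $-\phi'' + \gamma\phi' - (p-1)e^{(\gamma-1)t}|U_\gamma|^{p-2}\phi - \nu_j(\gamma)\phi = \rho\Psi_{\gamma,j}$, and $\phi \in E_\delta$ so $\phi(0)=0$ and $\phi$ decays; the bottom equation says $\int_0^\infty \Psi_{\gamma,j}\phi\,dt = 0$. Testing the top equation against $\Psi_{\gamma,j}$ (in the weighted $e^{-\gamma t}$ form (\ref{eq:trans-weighted-eigenvalue-problem}), or integrating by parts directly using the exponential decay of both $\phi$ and $\Psi_{\gamma,j}$ to kill boundary terms at infinity, and using $\phi(0) = \Psi_{\gamma,j}(0) = 0$), the left side yields $0$ because $\Psi_{\gamma,j}$ solves the homogeneous equation; hence $\rho \int_0^\infty \Psi_{\gamma,j}^2\,dt = 0$, so $\rho = 0$. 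Then $\phi$ is a bounded solution of the homogeneous linearized equation $-\phi'' + \gamma\phi' - (p-1)e^{(\gamma-1)t}|U_\gamma|^{p-2}\phi = \nu_j(\gamma)\phi$ with $\phi(0)=0$, i.e. $\phi$ is either zero or an eigenfunction of (\ref{eq:trans-weighted-eigenvalue-problem-rewritten}) for the eigenvalue $\nu_j(\gamma)$; since that eigenvalue is simple (Proposition~\ref{sec:preliminaries-limit-problem-3} and its proof), $\phi = c\,\Psi_{\gamma,j}$ for some $c \in \R$. But $\int_0^\infty \Psi_{\gamma,j}\phi\,dt = c\int_0^\infty \Psi_{\gamma,j}^2\,dt = c = 0$, so $\phi = 0$. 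Hence $\ker L = \{0\}$ and, being Fredholm of index zero, $L$ is an isomorphism. The main obstacle is the careful bookkeeping in the compactness step — in particular confirming the compact embedding $E_\delta \embed C_{\delta'}(\ov I)$ for a slightly smaller weight (or at fixed weight, exploiting the $L^1_\delta$-control of $\phi''$ to get equicontinuity of $\phi'$) so that the multiplication operator into $L^1_\delta$ is genuinely compact; everything else is routine once the function-space framework from Lemmas~\ref{sec:case-2p3-1-lemma}--\ref{isomorphism-p-2-3} is in place.
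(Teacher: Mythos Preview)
Your proof is correct and follows essentially the same approach as the paper: apply Lemma~\ref{isomorphism-p-2-3} with $\mu^2 = -\nu_j(\gamma)$ to obtain a reference isomorphism, perturb compactly to get a Fredholm operator of index zero, and eliminate the kernel by testing against $\Psi_{\gamma,j}$ (in the $e^{-\gamma t}$-weighted form) together with the simplicity of $\nu_j(\gamma)$. The paper organizes the argument slightly differently---it first studies the operator $T\phi = -\phi'' + \gamma\phi' - (p-1)e^{(\gamma-1)t}|U_\gamma|^{p-2}\phi - \nu_j(\gamma)\phi$ on $E_\delta \to F_\delta$, shows its kernel is one-dimensional and that $\Psi_{\gamma,j} \notin \mathrm{range}\,T$, and then verifies bijectivity of $L$ by hand---but the content is the same.
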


\begin{proof}
	Since, by definition, 
$$
\delta < \sqrt{\frac{\gamma_\sdiamond^2}{4} -\nu_\sdiamond} -\frac{\gamma_\sdiamond}{2} \le \sqrt{\frac{\gamma^2}{4} -\nu_j(\gamma)} -\frac{\gamma}{2},
$$
we may apply Lemma~\ref{isomorphism-p-2-3} with $\mu = \sqrt{-\nu_j(\gamma)}$. Hence the map ${E_\delta} \to {F_\delta}$, $\phi \mapsto -\phi'' + \gamma \phi' - \nu_j(\gamma) \phi$ is an isomorphism. Since the linear map ${E_\delta} \to {F_\delta}$, $\phi \mapsto (p-1)e^{(\gamma-1)t}|U_\gamma|^{p-2}\phi$ is compact, the map 
	\begin{align*} 
	T: {E_\delta} &\to {F_\delta} \\
	\phi & \mapsto -\phi'' + \gamma \phi'  - (p-1)e^{(\gamma-1)t}|U_\gamma|^{p-2}\phi - \nu_j(\gamma) \phi
	\end{align*}
	is a Fredholm operator of index zero. The kernel of this map is one dimensional, since it consists of eigenfunctions corresponding to $\nu_j(\gamma)$. 
	Hence the codimension of the image of $T$ is one, and we claim that $\Psi_{\gamma,j}$ is not contained in the image of $T$. Otherwise, there exists $\phi \in {E_\delta}$ such that 
	$-\phi'' + \gamma \phi'  - (p-1)e^{(\gamma-1)t}|U_\gamma|^{p-2}\phi - \nu_j(\gamma) \phi=\Psi_{\gamma,j}$. Multiplying with $\Psi_{\gamma,j}$ and integrating by parts then yields
        \begin{align*}
        0 < \int_0^\infty e^{-\gamma t}\Psi_{\gamma,j}^2 \, dt 
        &= \int_0^\infty (-(e^{-\gamma t}\phi')' - (p-1)e^{-t}|U_\gamma(t)|^{p-2}\phi- \nu_j(\gamma) e^{-\gamma t} \phi)\Psi_{\gamma,j} \, dt \\
&= \int_0^\infty(-(e^{-\gamma t}\Psi')' - (p-1)e^{-t}|U_\gamma(t)|^{p-2}\Psi 
 - \nu_j(\gamma) e^{-\gamma t}\Psi) \phi \, dt =0,
        \end{align*}
	a contradiction. It follows that 
	\begin{equation} \label{decomposition}
	{E_\delta}=\textrm{range}\, T \oplus \text{span} \{\Psi_{\gamma,j}\} .
	\end{equation}	
	We now show that $L$ is an isomorphism. First assume $L(\phi,\rho)=0$ for some $(\phi,\rho) \in {E_\delta} \times \R$, i.e., 
	$$
	-\phi'' + \gamma \phi'  - (p-1)e^{(\gamma-1)t}|U_\gamma|^{p-2}\phi - \nu_j(\gamma) \phi = \rho\Psi_{\gamma,j} \quad \text{in ${F_\delta}$} \qquad \text{and}\qquad \int_0^\infty\Psi_{\gamma,j} \phi \, dt =0.
	$$
	Since $\Psi_{\gamma,j} \not \in \textrm{range}\, T $, the first equality yields $\rho=0$. But then $\phi$ itself is an eigenfunction and therefore $\phi = c\Psi_{\gamma,j}$ for some $c \in \R$. The second equality then yields $c=0$, and thus $(\phi,\rho)=(0,0)$. Hence $L$ is injective. 
	
	Now let $(g, \sigma) \in {F_\delta} \times \R$. By \eqref{decomposition} there exist $g_0 \in \textrm{range}\, T$, $\kappa \in \R$ such that $g=g_0 + \kappa\Psi_{\gamma,j}$. Since $g_0 \in \textrm{range}\, T$, there exists a solution $\phi_0 \in {E_\delta}$ of  
	$$
	-\phi'' + \gamma \phi'  - (p-1)e^{(\gamma-1)t}|U_\gamma|^{p-2}\phi - \nu_j(\gamma) \phi = g_0  \qquad \text{in $I$.}
	$$
	 Furthermore, for any $\eta \in \R$, $\phi_0 + \eta \Psi_{\gamma,j} \in {E_\delta}$ is also a solution. Taking $\eta=\sigma-\int_0^\infty\Psi_{\gamma,j} \phi_0 \,dt$ yields
	$$
	\int_0^\infty\Psi_{\gamma,j} (\phi_0+\eta\Psi_{\gamma,j}) \, dt=\sigma .
	$$
	Consequently, we have
	$$
	L(\phi_0+\eta\Psi_{\gamma,j}, -\kappa) = {g \choose \sigma}.
	$$
	Hence $L$ is surjective. 
\end{proof}

With the help of Propositions \ref{G-differentiable-p-2-3} and \ref{deriv-isomorphism}, we may now apply the implicit function theorem to $G$ at 
$(\gamma,\Psi_{\gamma,j},\nu_j(\gamma))$. This yields the following result.  
 
\begin{corollary} \label{implicit function-spectral-asymptotics}
	There exist $\eps_1 \in (0,\eps_0)$ and, for $j=1,\dots,K$, $C^1$-maps $h_j: (-\eps_1,\gamma_\sdiamond) \to \R$ with 
the property that 
\begin{equation}
  \label{eq:g-j-2-equality}
h_j(\gamma) = \nu_j(\gamma) \qquad \text{for $j = 1,\dots,K$, $\gamma \in [0,\gamma_\sdiamond)$}
\end{equation}
and 
\begin{equation}
  \label{eq:limit-der-expression-gamma}
h_j'(0)=  -(p-1) \int_0^\infty \left( t e^{-t} |U_0|^{p-2}\Psi_{0,j}^2 + (p-2) e^{-t} |U_0|^{p-4} U_0 (\del_\gamma \big|_{\gamma=0}\, U_\gamma)\Psi_{0,j}^2 \right) \, dt 
\end{equation}
 for $j=1,\dots,K$. 
\end{corollary}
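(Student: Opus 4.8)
The plan is to apply the implicit function theorem to the map $G$ of \eqref{def-G-implicit} at the base points $(\gamma,\Psi_{\gamma,j},\nu_j(\gamma))$, using Proposition~\ref{G-differentiable-p-2-3} for the $C^1$-regularity of $G$ and Proposition~\ref{deriv-isomorphism} for the invertibility of $d_{\Psi,\nu}G$ at these points. First I would fix $j \in \{1,\dots,K\}$ and apply the implicit function theorem at $(0,\Psi_{0,j},\nu_j(0))$. Since $\Psi_{0,j} \in E_\delta$ by \eqref{eq:fix-delta-spectral-consequence} and $G(0,\Psi_{0,j},\nu_j(0))=0$, this produces $\eps_{1,j} \in (0,\eps_0)$ and a $C^1$-map $\gamma \mapsto (\Psi_j(\gamma),h_j(\gamma))$ on $(-\eps_{1,j},\eps_{1,j})$ with $(\Psi_j(0),h_j(0))=(\Psi_{0,j},\nu_j(0))$ and $G(\gamma,\Psi_j(\gamma),h_j(\gamma))=0$; in particular, for each such $\gamma$ the pair $(\Psi_j(\gamma),h_j(\gamma))$ is an eigenpair of \eqref{eq:trans-weighted-eigenvalue-problem-rewritten} normalized by $\int_0^\infty \Psi_j(\gamma)^2\,dt = 1$, and $\Psi_j(\gamma)(0)=0$ for all $\gamma$ because $E_\delta = W^2_\delta(I)$ enforces the vanishing at $0$.

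Next I would establish the identity $h_j(\gamma)=\nu_j(\gamma)$ for $\gamma \in [0,\eps_{1,j})$. As $h_j$ is continuous with $h_j(0)=\nu_j(0)<0$, after shrinking $\eps_{1,j}$ we have $h_j(\gamma)<0$, so $h_j(\gamma)$ must be one of the $K$ negative eigenvalues $\nu_1(\gamma)<\dots<\nu_K(\gamma)$ supplied by Proposition~\ref{sec:preliminaries-limit-problem-3}. Since each $\nu_k$ is continuous by Lemma~\ref{eq:gamma-sufficient}, the curves $\nu_1,\dots,\nu_K$ stay pairwise separated near $\gamma=0$, and $h_j(0)=\nu_j(0)$ forces $h_j(\gamma)=\nu_j(\gamma)$ in a neighborhood of $0$. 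The same reasoning applies verbatim at any $\gamma_0 \in [0,\gamma_\sdiamond)$: applying the implicit function theorem at $(\gamma_0,\Psi_{\gamma_0,j},\nu_j(\gamma_0))$ (legitimate by Propositions~\ref{G-differentiable-p-2-3} and~\ref{deriv-isomorphism}) yields a local $C^1$-branch through $\nu_j(\gamma_0)$ which, by the separation argument, coincides with $\nu_j$ nearby. Hence $\nu_j$ is of class $C^1$ on all of $[0,\gamma_\sdiamond)$. Gluing this with the branch $h_j$ on $(-\eps_{1,j},0]$ from the first step — which agrees with $\nu_j$ on $[0,\eps_{1,j})$, so the two pieces match together with their derivatives at $0$ — gives a $C^1$-map $h_j$ on $(-\eps_{1,j},\gamma_\sdiamond)$ with $h_j|_{[0,\gamma_\sdiamond)} = \nu_j$. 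Setting $\eps_1 := \min_{1 \le j \le K}\eps_{1,j}$ yields \eqref{eq:g-j-2-equality}.

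It remains to compute $h_j'(0)$. Differentiating $G(\gamma,\Psi_j(\gamma),h_j(\gamma))=0$ at $\gamma=0$ and inserting the partial derivatives from Proposition~\ref{G-differentiable-p-2-3} gives, in the first component with $\phi := \partial_\gamma\big|_{\gamma=0}\Psi_j(\gamma) \in E_\delta$,
\[
-\phi'' - (p-1)e^{-t}|U_0|^{p-2}\phi - \nu_j(0)\phi
= h_j'(0)\,\Psi_{0,j} - \Psi_{0,j}' + (p-1)e^{-t}|U_0|^{p-2}\Bigl(t + (p-2)\tfrac{U_0\,\partial_\gamma|_{\gamma=0}U_\gamma}{|U_0|^2}\Bigr)\Psi_{0,j},
\]
while the second component gives $\int_0^\infty \Psi_{0,j}\phi\,dt = 0$. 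Multiplying the display by $\Psi_{0,j}$, integrating over $I$, and integrating by parts twice — all boundary terms vanish since $\Psi_{0,j}(0)=0$, $\phi(0)=0$, and $\Psi_{0,j} \in C^2_\delta(I)$, $\phi \in W^2_\delta(I)$ decay exponentially — the left-hand side turns into $\int_0^\infty \phi\bigl(-\Psi_{0,j}'' - (p-1)e^{-t}|U_0|^{p-2}\Psi_{0,j} - \nu_j(0)\Psi_{0,j}\bigr)\,dt = 0$, because $\Psi_{0,j}$ solves \eqref{eq:weighted-eigenvalue-translimit} with $\nu = \nu_j(0)$. On the right-hand side, $\int_0^\infty \Psi_{0,j}'\Psi_{0,j}\,dt = \tfrac12[\Psi_{0,j}^2]_0^\infty = 0$ and $\int_0^\infty \Psi_{0,j}^2\,dt = 1$, leaving $h_j'(0) = -(p-1)\int_0^\infty e^{-t}|U_0|^{p-2}\bigl(t + (p-2)\tfrac{U_0\,\partial_\gamma|_{\gamma=0}U_\gamma}{|U_0|^2}\bigr)\Psi_{0,j}^2\,dt$, which is exactly \eqref{eq:limit-der-expression-gamma} once the integrand is distributed. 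The main obstacle, aside from Proposition~\ref{G-differentiable-p-2-3} which is proved separately, is the bookkeeping ensuring the implicit-function branch does not jump to another eigenvalue curve; this is resolved by the simplicity, continuity, and separation of the finitely many negative eigenvalue curves furnished by Proposition~\ref{sec:preliminaries-limit-problem-3} and Lemma~\ref{eq:gamma-sufficient}.
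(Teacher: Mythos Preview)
Your proof is correct and follows essentially the same approach as the paper: apply the implicit function theorem to $G$ at $(0,\Psi_{0,j},\nu_j(0))$ using Propositions~\ref{G-differentiable-p-2-3} and~\ref{deriv-isomorphism}, identify the resulting branch with $\nu_j$ via the simplicity and separation of the negative eigenvalues, then continue by applying the implicit function theorem at each $(\gamma,\Psi_{\gamma,j},\nu_j(\gamma))$. Your explicit derivation of \eqref{eq:limit-der-expression-gamma} by testing against $\Psi_{0,j}$ spells out what the paper summarizes as ``implicit differentiation,'' but the substance is the same.
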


\begin{proof}
By Propositions \ref{G-differentiable-p-2-3}, \ref{deriv-isomorphism} and the implicit function theorem applied to the map $G$ at $(0,\Psi_{0,j},\nu_j(0))$, there exists $\eps_1 \in (0,\eps_0)$ and $C^1$-maps $g_j: (-\eps_1,\eps_1) \to F_\delta \times \R$ with the property that $g_j(0)=(\Psi_{0,j},\nu_j(0))$ and 
$G(\gamma,g_j(\gamma))=0$ for $\gamma \in (-\eps_1,\eps_1)$. Let $h_j$ denote the second component of $g_j$. Since 
$$
\nu_1(0) = h_1(0) < \nu_2(0) = h_2(0) < \dots < \nu_K(0) = h_K(0)< 0, 
$$
we may, after making $\eps_1$ smaller if necessary, assume that also 
$$
h_1(\gamma) < h_2(\gamma) < \dots < h_K(\gamma)<0 \qquad \text{for $\gamma \in (0,\eps_1)$.} 
$$
Since, by construction, the values $h_j(\gamma)$ are eigenvalues of (\ref{eq:trans-weighted-eigenvalue-problem}) and the negative eigenvalues of (\ref{eq:trans-weighted-eigenvalue-problem}) are precisely given by (\ref{var-char-nu-j}), the equality (\ref{eq:g-j-2-equality}) follows for $\gamma \in (0,\eps_1)$. Using Propositions \ref{G-differentiable-p-2-3}, \ref{deriv-isomorphism} and applying the implicit function theorem at $(\gamma,\Psi_{\gamma,j},\nu_j(\gamma))$, the functions $h_j$ may be extended as $C^1$-functions to $(-\eps_1,\gamma_\sdiamond)$ such that (\ref{eq:g-j-2-equality}) holds for $(0,\gamma_\sdiamond)$.
Moreover, (\ref{eq:limit-der-expression-gamma}) is a consequence of implicit differentiation of the equation 
$G(\gamma,g_j(\gamma))=0$.
\end{proof}

We may now complete the 

\begin{proof}[Proof of Theorem~\ref{spectral-curves}] We first note that -- since $U_0:=(-1)^{K-1}U_\infty$ -- the eigenvalue problem (\ref{eq:weighted-eigenvalue-translimit-preliminaries}) coincides with (\ref{eq:weighted-eigenvalue-translimit}), and it has precisely $K$ negative eigenvalues $\nu_j^*:= \nu_j(0)$, $j=1,\dots,K$ by Proposition~\ref{sec:preliminaries-limit-problem-3}. To prove the expansions~(\ref{expansions}), we fix $j \in \{1,\dots,K\}$. By Remark \ref{introduction-remark-1} and Corollary \ref{implicit function-spectral-asymptotics}, the constant $c_j^*$ appearing in \eqref{expansions} is given by $c_j^* = 2 N \nu_j^* + (N-2) h_j'(0).$ Now  Corollary \ref{implicit function-spectral-asymptotics} yields the expansions
\begin{equation} \label{eigenvalue: Taylor expansion}
\nu_j(\gamma) = \nu_j^* + \gamma h_j'(0) + o(\gamma)\qquad \text{and}\qquad 
\partial_\gamma \nu_j(\gamma) = h_j'(0) + o(1) \qquad \text{as $\gamma \to 0^+$.}
\end{equation} 
Writing $\gamma= \gamma(\alpha)= \frac{N-2}{N+\alpha}$ as before and recalling  
\eqref{eq:transformed-variables}, we thus have
\begin{align*} 
\mu_j(\alpha) & = (N+\alpha)^2 \nu_j(\gamma(\alpha)) = (N+\alpha)^2 \left(
\nu_j^* + \frac{N-2}{N+\alpha} h_j'(0) + o\left(\frac{1}{\alpha}\right)\right)\\
& = \nu_j^* \,\alpha^2  +  \bigl[2N \nu_j^* + (N-2) h_j'(0)\bigr]\alpha + o(\alpha) =  \nu_j^* \,\alpha^2 +  c_j^*\,\alpha + o(\alpha)
\end{align*}
and 
\begin{align*}
\mu_j'(\alpha) & = 2(N+\alpha) \nu_j(\gamma(\alpha)) - (N-2) [\del_\gamma \nu_j](\gamma(\alpha)) \\
& = 2(N+\alpha) \left(\nu_j^* + \frac{N-2}{N+\alpha} h_j'(0) + o\left(\frac{1}{\alpha} \right) \right) - (N-2)(h_j'(0) + o(1)) \\
& = 2 \nu_j^* \,\alpha    + 2 N \nu_j^* + (N-2) h_j'(0) + o(1) = 2 \nu_j^*\, \alpha + c_j^* + o(1)\quad \text{as $\alpha \to \infty$.}
\end{align*}
\end{proof}

We may also complete the 

\begin{proof}[Proof of Theorem~\ref{corollary-on-eigenvalue-curves}]
By Theorem~\ref{spectral-curves} we have 
$$
\mu_{i}'(\alpha) = 2 \alpha \nu^*_i + c^*_i +o(1)  \qquad \text{as $\alpha \to \infty$}
$$
for $i=1,\dots,K$. Since the values $\nu^*_i$ are negative, we may thus fix $\alpha_*> 0$ such that 
\begin{equation}
  \label{eq:strict-deriv-ineq}
\mu_{i}'(\alpha)< 0 \qquad \text{for $\alpha \ge \alpha_*$, $i=1,\dots,K$.}
\end{equation}
We now fix $i \in \{1,\dots,K\}$. Then there exists a minimal positive integer $\ell_i$ 
such that 
$$
\mu_{i}(\alpha_*)+ \lambda_\ell > 0 \qquad \text{for $\ell \ge \ell_i$.}
$$
Moreover, since $\mu_{i}(\alpha) \to -\infty$ as $\alpha \to \infty$ by Theorem~\ref{spectral-curves}, there exists, for every $\ell \ge \ell_i$,  precisely one value $\alpha_{i,\ell} \in (\alpha_*,\infty)$ such that 
$$
\mu_{i}(\alpha_{i,\ell})+ \lambda_\ell = 0.
$$
Fix such a value $\alpha_{i,\ell}$ and put $\delta_{i,\ell}= \alpha_{i,\ell}-\alpha_*$. Since the curves $\alpha \mapsto \mu_j(\alpha)$, $j=1,\dots,K$ are bounded on the interval $[\alpha_*,\alpha_{i,\ell}+\delta_{i,\ell}]$, it follows that the set
$$
N_{i,\ell}:= \left \{
  \begin{aligned}
  &(j,\ell') \in \{1,\dots,K\} \times (\N \cup \{0\}) \::\\
  &\mu_{j}(\alpha)+ \lambda_{\ell'}= 0 \; \text{for some $\alpha \in [\alpha_*,\alpha_{i,\ell}+\delta_{i,\ell}]$}
  \end{aligned}
\right \}
$$
is finite. Combining this fact with (\ref{eq:strict-deriv-ineq}), we find $\eps_{i,\ell} \in (0,\delta_{i,\ell})$ such that 
$$
\mu_{j}(\alpha)+\lambda_{\ell'} \not = 0 \qquad \text{for $\alpha \in (\alpha_{i,\ell}-\eps_{i,\ell},\alpha_{i,\ell}+\eps_{i,\ell}) \setminus \{\alpha_{i,\ell}\}$, $j=1,\dots,K$ and $\ell' \in \N \cup \{0\}$.}
$$
From Proposition~\ref{spectral-curves-0}, it then follows that $u_\alpha$ is nondegenerate for $\alpha \in (\alpha_{i,\ell}-\eps_{i,\ell},\alpha_{i,\ell}+\eps_{i,\ell})$, $\alpha \not = \alpha_{i,\ell}$. Finally, it also follows from Proposition~\ref{spectral-curves-0} and  (\ref{eq:strict-deriv-ineq}) that 
$$
m(u_{\alpha_{i,\ell}+\eps})-m(u_{\alpha_{i,\ell}-\eps}) = \sum_{(j,\ell') \in M_{i,\ell}} d_{\ell'}
>0 \qquad \text{for $\eps \in (0,\eps_{i,\ell})$,}
$$ 
where $M_{i,\ell} \subset \{1,\dots,K\} \times (\N \cup \{0\}$ is the set of pairs $(j,\ell')$ with $\mu_{j}(\alpha_{i,\ell})+ \lambda_{\ell'}=  0$ and, as before, $d_{\ell'}$ is the dimension of the space of spherical harmonics of degree $\ell'$. Here we note that $M_{i,\ell} \not = \varnothing$ since it contains $(i,\ell)$. 
\end{proof}

\section{Differentiability of the map $G$}
\label{sec:differentiability-g}

In this section, we give the proof of Proposition~\ref{G-differentiable-p-2-3}, which we restate here in a slightly more general form. As before, we fix $p>2$ and $\gamma_\sdiamond \in [0,\frac{N-2}{N+\alpha_p})$.
\begin{proposition}
\label{G-differentiable-p-2-3-restated}
Let $\eps_0 \in (0,\frac{1}{2})$ be given by Proposition~\ref{implicit function for U-gamma}, so that the map $(-\eps_0,\gamma_\sdiamond) \to C^1_0(I)$, $\gamma \mapsto U_\gamma$ is well defined and differentiable by Remark~\ref{gamma-negative-definition}. Let, furthermore, $\delta \in (0,\frac{2}{N})$, and let the map 
  $$
G: \left(-\eps_0,\gamma_\sdiamond \right) \times W^2_\delta(I)  \times \R  \to L^1_\delta(I) \times \R $$
be defined by (\ref{def-G-implicit}). Then $G$ is of class $C^1$ with 
\begin{align*} 
&d_{\gamma}G(\gamma,\Psi,\nu) = 
\begin{pmatrix}
\Psi'   - (p-1)e^{(\gamma-1)t}|U_\gamma|^{p-2} \Bigl(t + (p-2)\frac{U_\gamma \partial_\gamma U_\gamma}{|U_\gamma|^2}\Bigr)\Psi\\
0
\end{pmatrix},
\quad &d_{\nu}G(\gamma,\Psi,\nu)= 
\begin{pmatrix}
 -\Psi \\
0
\end{pmatrix}\\
&\text{and}\qquad d_{\Psi}G(\gamma,\Psi,\nu)\phi = 
\begin{pmatrix}
-\phi''  +\gamma \phi' - (p-1)e^{(\gamma-1)t}|U_\gamma|^{p-2}\phi - \nu \phi \\
\int_0^\infty \Psi_0 \phi \, dt
\end{pmatrix}.
\end{align*}
\end{proposition}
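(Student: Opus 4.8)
The plan is to split $G=(G_1,G_2)$ and treat every term except one as routine. The functional $G_2(\Psi)=\int_0^\infty\Psi^2\,dt-1$ is well defined since $W^2_\delta(I)\hookrightarrow C^1_\delta(I)$ gives $\abs{\Psi(t)}\le\norm{\Psi}_{C^1_\delta}e^{-\delta t}$, hence $\Psi\in L^2(I)$; being a bounded quadratic functional it is of class $C^\infty$ and contributes the expected linear terms to the differential. In the first component
$$
G_1(\gamma,\Psi,\nu)=-\Psi''+\gamma\Psi'-\nu\Psi-\mathcal N(\gamma,\Psi),\qquad \mathcal N(\gamma,\Psi):=(p-1)e^{(\gamma-1)t}\abs{U_\gamma}^{p-2}\Psi,
$$
the part $-\Psi''+\gamma\Psi'-\nu\Psi$ is a sum of bounded multilinear maps into $L^1_\delta(I)$ — using that $-\partial_t^2\colon W^2_\delta(I)\to L^1_\delta(I)$ is bounded by the definition of $W^2_\delta(I)$ and $\partial_t\colon W^2_\delta(I)\to C_\delta(I)\hookrightarrow L^1_\delta(I)$ is bounded — hence it is $C^\infty$. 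Next, $\mathcal N(\gamma,\Psi)=a_\gamma\Psi$ with $a_\gamma(t):=(p-1)e^{(\gamma-1)t}\abs{U_\gamma(t)}^{p-2}$, and the bilinear multiplication $L^1_\delta(I)\times W^2_\delta(I)\to L^1_\delta(I)$, $(f,\Psi)\mapsto f\Psi$, is continuous because $\abs{\Psi(s)}\le\norm{\Psi}_{C_\delta}e^{-\delta s}$ gives $\norm{f\Psi}_{1,\delta}\le\norm{f}_{1,0}\norm{\Psi}_{C_\delta}\le\norm{f}_{1,\delta}\norm{\Psi}_{W^2_\delta}$. Thus everything reduces to proving that $\gamma\mapsto a_\gamma$ is a $C^1$-map $(-\eps_0,\gamma_\sdiamond)\to L^1_\delta(I)$ with derivative $b_\gamma:=a_\gamma\bigl(t+(p-2)\tfrac{U_\gamma\,\partial_\gamma U_\gamma}{\abs{U_\gamma}^2}\bigr)$; the stated formulas for $d_\gamma G$, $d_\nu G$, $d_\Psi G$ then follow by the product and chain rules. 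Note that $b_\gamma\in L^1_\delta(I)$: the factor $\abs{U_\gamma}^{p-4}U_\gamma$ implicit in $b_\gamma$ behaves like $\abs{t-z}^{p-3}$ near a simple zero $z$ of $U_\gamma$, which is integrable precisely because $p>2$, while for large $t$ the weight $e^{(\gamma-1)t}$ (recall $\gamma<1$ and $\delta<\tfrac2N\le1-\gamma$) supplies the decay required by $\norm{\cdot}_{1,\delta}$.

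By ODE uniqueness no nontrivial bounded solution of \eqref{eq:U-gamma} has a double zero, so the zeros of $U_\gamma$ in $\overline I$ — namely $z_0(\gamma)=0<z_1(\gamma)<\dots<z_{K-1}(\gamma)$, the first being the boundary zero ($U_\gamma'(0)\neq0$) and the other $K-1$ lying in $(0,\infty)$ — are all simple. Fix $\gamma_*\in(-\eps_0,\gamma_\sdiamond)$. Since $\gamma\mapsto U_\gamma\in E$ is of class $C^1$ by Proposition~\ref{implicit function for U-gamma} and Remark~\ref{gamma-negative-definition}, and $E\hookrightarrow C^2(\overline I)$ with bounded derivatives up to second order, the map $(\gamma,t)\mapsto U_\gamma(t)$ is $C^1$; as $\partial_tU_{\gamma_*}\neq0$ at each $z_j(\gamma_*)$, the implicit function theorem yields $C^1$-functions $\gamma\mapsto z_j(\gamma)$ near $\gamma_*$. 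Choose $\rho>0$ and a neighbourhood $\Omega_*$ of $\gamma_*$ so that the relatively open intervals $J_j\subset\overline I$ of half-width $\rho$ centred at $z_j(\gamma_*)$ are pairwise disjoint, every zero of $U_\gamma$ lies in $\bigcup_jJ_j$ for $\gamma\in\Omega_*$, and $c_*:=\inf_{\gamma\in\Omega_*}\inf_{t\in I\setminus\bigcup_jJ_j}\abs{U_\gamma(t)}>0$ (by uniform convergence $U_\gamma\to U_{\gamma_*}$ on $\overline I$). On $I\setminus\bigcup_jJ_j$ the map $u\mapsto\abs{u}^{p-2}$ is $C^1$ on the range $\{\abs u\ge c_*\}$, so the chain rule together with the weight $e^{(\gamma-1)t}$ controlling $t\to\infty$ shows $\gamma\mapsto a_\gamma|_{I\setminus\bigcup_jJ_j}$ is $C^1$ into $L^1_\delta(I)$ with derivative $b_\gamma$ restricted there.

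The delicate part is the analysis on each $J_j$. There we write $U_\gamma(t)=(t-z_j(\gamma))\,W_{j,\gamma}(t)$ with $W_{j,\gamma}(t):=\int_0^1U_\gamma'\bigl(z_j(\gamma)+\tau(t-z_j(\gamma))\bigr)\,d\tau$, so that $W_{j,\gamma}(z_j(\gamma))=U_\gamma'(z_j(\gamma))\neq0$; shrinking $\rho,\Omega_*$ we may assume $\abs{W_{j,\gamma}}\ge c_j>0$ on $J_j$, and $\gamma\mapsto W_{j,\gamma}\in C(\overline{J_j})$ is then of class $C^1$ (a standard Leibniz-rule argument using the $C^1$-dependence of $U_\gamma\in C^2(\overline I)$ and of $z_j(\gamma)$). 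Hence on $J_j$
$$
a_\gamma(t)=(p-1)\,e^{(\gamma-1)t}\,\abs{W_{j,\gamma}(t)}^{p-2}\,\abs{t-z_j(\gamma)}^{p-2},
$$
a product of a jointly smooth bounded factor, a $C^1$ (in $\gamma$, into $C(\overline{J_j})$) factor bounded away from $0$, and the factor $\abs{t-z_j(\gamma)}^{p-2}$. For the last factor it suffices, by the chain rule applied to the $C^1$-function $\gamma\mapsto z_j(\gamma)$, to prove that $a\mapsto\bigl[t\mapsto\abs{t-a}^{p-2}\bigr]$ is a $C^1$-map into $L^1(J_j)$ with derivative $a\mapsto-\phi'(\cdot-a)$, where $\phi(r)=\abs r^{p-2}$ and $\phi'(r)=(p-2)\abs r^{p-3}\sgn r\in L^1_\loc(\R)$ — here again $p>2$ enters. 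This follows from the identity
$$
\abs{t-a-s}^{p-2}-\abs{t-a}^{p-2}+s\,\phi'(t-a)=\int_0^s\bigl[\phi'(t-a)-\phi'(t-a-\tau)\bigr]\,d\tau\qquad\text{for a.e. }t,
$$
since by Fubini the $L^1(J_j)$-norm of the left-hand side is $\le\abs s\,\sup_{\abs\tau\le\abs s}\norm{\phi'-\phi'(\cdot-\tau)}_{L^1}=o(\abs s)$ by continuity of translations in $L^1$; the same estimate gives continuity of $a\mapsto\phi'(\cdot-a)$.

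Combining the three factors via the continuous bilinear multiplication $L^1(J_j)\times C(\overline{J_j})\to L^1(J_j)$ and the product rule, $\gamma\mapsto a_\gamma|_{J_j}\in L^1(J_j)$ is $C^1$, and a short computation substituting $U_\gamma=(t-z_j(\gamma))W_{j,\gamma}$ and $\partial_\gamma U_\gamma=-z_j'(\gamma)W_{j,\gamma}+(t-z_j(\gamma))\partial_\gamma W_{j,\gamma}$ shows that its derivative equals $b_\gamma|_{J_j}$. Since any window $[t,t+1]$ meets only finitely many of the pieces $J_0,\dots,J_{K-1},\,I\setminus\bigcup_jJ_j$ — and only the last for $t$ large — patching these $C^1$-statements yields that $\gamma\mapsto a_\gamma\in L^1_\delta(I)$ is $C^1$ on $\Omega_*$ with derivative $b_\gamma$; as $\gamma_*$ was arbitrary this proves the claim, and hence Proposition~\ref{G-differentiable-p-2-3-restated}. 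I expect the main obstacle to be exactly this behaviour at the zeros of $U_\gamma$ for $p\in(2,3]$: there $\abs{U_\gamma}^{p-2}$ is only continuous while its $\gamma$-derivative is unbounded, so differentiability is available only in an integral norm — which is what dictates the target $L^1_\delta(I)$ and the domain $W^2_\delta(I)$ (so that Lemma~\ref{sec:case-2p3-2-W-banach} and, later, Lemma~\ref{isomorphism-p-2-3} apply) — and the argument ultimately rests on the two facts $p-3>-1$ and continuity of translations in $L^1$.
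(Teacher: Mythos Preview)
Your proof is correct and takes a genuinely different route from the paper. The paper isolates the differentiability question into a standalone lemma (Lemma~\ref{continuity-2-p-3}): the nonlinear map $h_q:\cU\to L^1_0(I)$, $u\mapsto|u|^q$, is of class $C^1$ on the open set $\cU\subset C^1_0(I)$ of functions with finitely many simple zeros and nonvanishing limit at infinity, with derivative $h_q'(u)w=q|u|^{q-2}uw$. Its proof is measure-theoretic: it uses the layer-cake formula together with the uniform level-set bound $\kappa_u:=\sup_{\tau>0,\,t\ge0}\tau^{-1}|\{|u|\le\tau\}\cap(t,t+1)|<\infty$ (locally uniform on compact subsets of $\cU$) to first show that the auxiliary map $\sigma_q(u)=|u|^{q-2}u$ is continuous into $L^r_0(I)$ for $1\le r<\tfrac{1}{1-q}$, and then deduces differentiability of $h_q$; the proposition follows by writing the critical term as a composition $H_3\circ H_2\circ H_1$ with $H_2(\gamma,\Psi,v)=(\gamma,\Psi,|v|^{p-2})$. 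You instead work directly with the scalar parameter $\gamma$: you track the zeros $z_j(\gamma)$ via the implicit function theorem, factor $U_\gamma(t)=(t-z_j(\gamma))W_{j,\gamma}(t)$ with $W_{j,\gamma}$ bounded away from zero on $J_j$, and reduce the only delicate piece to the elementary fact that $a\mapsto|\cdot-a|^{p-2}$ is $C^1$ from $\R$ into $L^1(J_j)$, which follows from $\phi'(r)=(p-2)|r|^{p-3}\sgn r\in L^1_{\loc}$ and continuity of translations in $L^1$. The paper's approach is more modular --- Lemma~\ref{continuity-2-p-3} handles arbitrary perturbations in $C^1_0(I)$, not merely the one-parameter family $\gamma\mapsto U_\gamma$, so it could be reused elsewhere --- while your approach is more elementary, bypassing the level-set estimates entirely by exploiting that only a single real parameter varies and that the zero set is finite and moves smoothly. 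Both arguments ultimately hinge on the same integrability threshold $p-3>-1$.
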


The remainder of this section is devoted to the proof of this proposition. We first note that, by Lemma~\ref{bounded-sol-E}, $U_\gamma$ has a finite number of simple zeros and satisfies $\lim \limits_{t \to \infty}|U_\gamma(t)|>0$ for $\gamma \in \left(-\eps_0, \gamma_\sdiamond \right)$. The key step in the proof of Proposition~\ref{G-differentiable-p-2-3-restated} is the following lemma. 

\begin{lemma}
\label{continuity-2-p-3}
Let $q >0$, and let $\cU \subset C^1_0(I)$ be the open subset of functions $u \in C^1_0(I)$ which have a finite number of simple zeros and satisfy $\lim \limits_{t \to \infty}|u(t)|>0$. Then the nonlinear map 
$$
h_q: \cU \to L^1_0(I),\qquad u \mapsto |u|^q
$$
is of class $C^1$ with 
$$
h_q'(u) w = q|u|^{q-2}u w \;\in\; L^1_0(I)\qquad \text{for 
$u \in \cU, w \in C^1_0(I)$.}
$$ 
Here we identify $|u|^{q-2}u$ with $\sgn(u)$ in the case $q=1$.
\end{lemma}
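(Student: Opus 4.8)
The plan is to reduce the whole statement to the continuity of the Nemytskii-type map $N_\psi:\cU\to L^1_0(I)$, $N_\psi(u):=|u|^{q-2}u$ (read as $\sgn(u)$ when $q=1$), and then to prove that continuity by localizing around the finitely many simple zeros of $u$, where the singularity of $\psi(s):=|s|^{q-2}s$ at $s=0$ for $q\in(0,1)$ is the only genuine difficulty. First I would check well-definedness. Since $u\in\cU$ is bounded, $|u|^q$ is bounded, hence in $L^1_0(I)$, so $h_q$ is well defined. For $N_\psi(u)\in L^1_0(I)$ when $q\in(0,1)$: outside a compact set $|u|\ge c>0$ by the assumption at infinity, so $|u|^{q-1}$ is bounded there, while near each simple zero $t_k$ one has $|u(t)|\ge c_k|t-t_k|$ with $c_k>0$, hence $|u(t)|^{q-1}\le c_k^{q-1}|t-t_k|^{q-1}$ is locally integrable because $q-1>-1$; with finitely many zeros this gives $\|\,|u|^{q-1}\,\|_{1,0}<\infty$. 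Consequently $w\mapsto q|u|^{q-2}uw=q\,N_\psi(u)w$ is a bounded operator $C^1_0(I)\to L^1_0(I)$ (multiplication by the bounded function $w$), so the claimed derivative makes sense as an element of $\mathcal{L}(C^1_0(I),L^1_0(I))$.

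\emph{Reduction to continuity of $N_\psi$.} For $u,v\in\cU$ with $w:=v-u$ I would use that for a.e.\ $t$ the function $\tau\mapsto|u(t)+\tau w(t)|^q$ is absolutely continuous on $[0,1]$ — near its at most one zero $\tau_0$ it behaves like $|w(t)|^q|\tau-\tau_0|^q$, which is absolutely continuous since $q>0$ — so the fundamental theorem of calculus gives, pointwise a.e.,
\[
|v|^q-|u|^q=q\int_0^1|u+\tau w|^{q-2}(u+\tau w)\,w\,d\tau .
\]
Subtracting $q\,N_\psi(u)w$, taking absolute values, integrating over a unit window $(t_0,t_0+1)$ and applying Tonelli's theorem, one gets
\[
\bigl\|\,h_q(v)-h_q(u)-q\,N_\psi(u)w\,\bigr\|_{1,0}\ \le\ q\,\|w\|_{L^\infty(I)}\int_0^1\bigl\|\,N_\psi(u+\tau w)-N_\psi(u)\,\bigr\|_{1,0}\,d\tau .
\]
If $N_\psi$ is continuous at $u$, then for $\|w\|_{C^1_0}$ small the integrand is uniformly small in $\tau$ (since $\|(u+\tau w)-u\|_{C^1_0}\le\|w\|_{C^1_0}$), so the right-hand side is $o(\|w\|_{C^1_0})$; this proves Fréchet differentiability of $h_q$ at $u$ with $h_q'(u)w=q\,N_\psi(u)w$. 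Continuity of $u\mapsto h_q'(u)$ in operator norm follows from $\|h_q'(u)-h_q'(u_0)\|_{\mathcal{L}(C^1_0,L^1_0)}\le q\,\|N_\psi(u)-N_\psi(u_0)\|_{1,0}$. So everything comes down to the continuity of $N_\psi$.

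\emph{Continuity of $N_\psi$ — the main point.} First note that $\cU$ is open: near $u$ the zeros persist, stay simple (strict monotonicity through each simple zero together with the sign condition at infinity), and $|v|$ stays bounded below away from them. Fix $u$ with simple zeros $t_1<\dots<t_m$; choose disjoint $J_k=(t_k-\rho_k,t_k+\rho_k)$ with $|u'|\ge a_k/2$ on $\overline{J_k}$ and set $K:=[0,\infty)\setminus\bigcup_k J_k$, so $\inf_K|u|=:c_0>0$. For $v$ in a small $C^1_0$-ball around $u$ one has $v\in\cU$ with a single simple zero $s_k\in J_k$, $|v'|\ge a_k/4$ on $\overline{J_k}$, and $|v|\ge c_0/2$ on $K$. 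Split the $\|\cdot\|_{1,0}$-norm of $N_\psi(v)-N_\psi(u)$ into the part on $K$ and the parts on the $J_k$. On $K$, $u$ and $v$ take values in a compact set away from $0$, on which $\psi$ is uniformly continuous, so $\|N_\psi(v)-N_\psi(u)\|_{L^\infty(K)}\to0$, which dominates the $\|\cdot\|_{1,0}$-norm there. On each $J_k$ (a fixed bounded interval, so $\|\cdot\|_{1,0}$ is controlled by $\|\cdot\|_{L^1(J_k)}$) one has $N_\psi(v)\to N_\psi(u)$ a.e.\ (uniform convergence $v\to u$, continuity of $\psi$ off $0$, and $u\ne0$ except at the single point $t_k$), and I would upgrade this to $L^1(J_k)$-convergence by Vitali's theorem. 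The required uniform integrability (nontrivial only for $q\in(0,1)$) comes from the substitution $y=v(t)$: since $v$ is a $C^1$-diffeomorphism of $J_k$ with $|v'|\ge a_k/4$ and $|v|\le C:=\|u\|_\infty+1$,
\[
\int_A|N_\psi(v)|\,dt=\int_A|v|^{q-1}\,dt\ \le\ \tfrac{4}{a_k}\int_{v(A)}|y|^{q-1}\mathbf{1}_{[-C,C]}(y)\,dy,\qquad |v(A)|\le C_1|A|,
\]
with $C_1$ a uniform bound on $\|v'\|_{L^\infty}$; since $|y|^{q-1}\mathbf{1}_{[-C,C]}\in L^1(\R)$ (as $q-1>-1$), absolute continuity of the Lebesgue integral makes the right-hand side tend to $0$ as $|A|\to0$ uniformly in $v$, which is exactly uniform integrability. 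Vitali's theorem then gives $\int_{J_k}|N_\psi(v)-N_\psi(u)|\,dt\to0$, finishing the continuity of $N_\psi$ and hence the lemma.

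\emph{Expected main obstacle.} The hard part will be the analysis on the $J_k$ for $q\in(0,1)$, where $\psi$ is unbounded at the zeros of $u$: the change of variables that replaces $dt$-integration over $J_k$ by $dy$-integration of the fixed weight $|y|^{q-1}$, combined with absolute continuity of the integral and Vitali's theorem, is what resolves it. A secondary technical point is getting the absolute continuity in $\tau$ of $|a+\tau b|^q$ right and doing the bookkeeping for the convention at $q=1$ (where $\psi=\sgn$ and the $J_k$ step is a trivial dominated-convergence argument, but where this $L^1$-based functional framework is precisely what makes $u\mapsto|u|$ of class $C^1$).
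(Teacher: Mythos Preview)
Your proof is correct and takes a genuinely different route from the paper's. The paper splits the difference quotient $(h_q(u+\tau w)-h_q(u))/\tau$ into two pieces according to whether $|u|>|\tau|$ or $|u|\le|\tau|$, and for the first piece needs continuity of $\sigma_q(u)=|u|^{q-2}u$ in $L^r_0(I)$ for some $r>1$ (proved via the layer-cake formula and the sublevel-set estimate $|\{|u|\le\tau\}\cap(t,t+1)|\le\kappa_u\,\tau$), so that H\"older's inequality yields a factor $|\{|u|\le|\tau|\}|^{1/r'}\to0$. You instead observe that $\tau\mapsto|a+\tau b|^q$ is absolutely continuous on $[0,1]$ for every $q>0$, so the integral representation $h_q(u+w)-h_q(u)=q\int_0^1 N_\psi(u+\tau w)\,w\,d\tau$ holds pointwise a.e.\ without any splitting, and the entire argument reduces to continuity of $N_\psi$ in $L^1_0$ alone; near each simple zero you then invoke Vitali's theorem, with uniform integrability obtained by the change of variables $y=v(t)$ that pushes the singular weight $|v|^{q-1}$ to the fixed integrable weight $|y|^{q-1}\mathbf{1}_{[-C,C]}$. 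Your approach is more conceptual and avoids the detour through $L^r_0$ for $r>1$; the paper's argument, on the other hand, is more explicit and quantitative (it gives an error of order $|\tau|^q$ on the small-$|u|$ piece), and its sublevel-set device is somewhat more robust in that it does not rely on $v$ being a local diffeomorphism near each zero.
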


\begin{proof}
We only consider the case $q \in (0,1)$. The proof in the case $q=1$ is similar 
but simpler, and the proof in the case $q>1$ is standard. We first prove\\
{\em \underline{Claim 1:}} If $1 \le r < \frac{1}{1-q}$, then the map $\sigma_q : \cU \to L^r_0(I),\: \sigma_q(u)=|u|^{q-2}u$ is well defined and continuous.\\
To see this, we note that, by definition of $\cU$, for every $u \in \cU$ we have\begin{equation}
  \label{parini-w-est-1}
\kappa_u:= \sup\left\{ \frac{|\{|u| \le \tau\} \cap (t,t+1)|}{\tau}: \tau>0, \ t \geq 0 \right\} <\infty.
\end{equation}
More generally, if $K \subset \cU$ is a compact subset (with respect to $\|\cdot\|_{C^1_0}$), we also have that 
$$
\kappa_{\text{\tiny $K$}}:= \sup_{u \in K}\kappa_u < \infty.
$$
As a consequence of (\ref{parini-w-est-1}), we have  
\begin{align*}
\int_{t}^{t+1}|\sigma_q(u)|^{r}\,dx &= \int_{t}^{t+1}|u|^{(q-1)r}\,dx
=\int_0^\infty |(t,t+1) \cap \{|u|^{(q-1)r} \ge s\}|\,ds \\
&=
\int_0^\infty |(t,t+1) \cap \{|u| \le s^{\frac{1}{(q-1)r}}\}|\,ds \le 
\int_0^{\infty} \min  \{1, \kappa_u\, s^{\frac{1}{(q-1)r}}\}\,ds < \infty
\end{align*}
for every $u \in \cU$ and $t \ge 0$, since $\frac{1}{(q-1)r}<-1$ by assumption. Hence $\sigma_q(u) \in L^r_0(I)$ for every $u \in \cU$, so the map $\sigma_q$ is well defined.
To see the continuity of $\sigma_q$, let $(u_n)_n
\subset \cU$ be a sequence such that $u_n \to u \in \cU$ as $n \to \infty$
with respect to the $C^1_0$-norm. We then consider the compact set $K:=
\{u_n,u \::\: n \in \N\}$. For given $\eps>0$, we fix $c \in (0,1)$ sufficiently small such that 
\begin{equation}
  \label{eq:parini-weth-9}
c^{(q-1)r+1} < \frac{\eps}{2^{r} \kappa_{\text{\tiny $K$}} \Bigl(\frac{ 2^{1+(q-1)r}}{1+(q-1)r} \Bigr)} .
\end{equation}
Since $u_n \to u$ uniformly on $[0,\infty)$, it is easy to see
that 
\begin{equation}
  \label{eq:11}
\sup_{t \ge 0} \int_{t}^{t+1} 1_{\{|u| > c\}} \bigl|\sigma_q(u_n) -\sigma_q(u)\bigr|^r \,dx \to 0 \qquad
\text{as $n \to \infty$.}
\end{equation}
Moreover, there exists $n_0 \in \N$ with the property that 
$$
\{|u| \le c \} \subset
\{|u_n| \le 2c\} \qquad \text{for $n \ge n_0$.}
$$
Consequently, setting $v_n:= |u_n|^{(q-1)r}$ for $n \ge n_0$ and $v:= |u|^{(q-1)r}$, we find that  
\begin{align*}
&\sup_{t \ge 0} \int_{t}^{t+1} 1_{\{|u| \le c\} } \Bigl|\sigma_q(u_n)-\sigma_q(u)\Bigr|^r \,dx  \le 
2^{r-1} \int_{\{|u| \le c\} \cap (t,t+1)}\Bigl(|u_n|^{(q-1)r}+ |u|^{(q-1)r}\Bigr) \,dx\\
&\le 
2^{r-1} \Bigl(\int_{\{|u_n| \le 2 c\} \cap (t,t+1)} |u_n|^{(q-1)r} \,dx +   \int_{\{|u| \le c\} \cap (t,t+1)} 
|u|^{(q-1)r}\,dx \Bigr)\\
&= 
2^{r-1} \Bigl(\int_{\{v_n \ge (2 c)^{(q-1)r}\} \cap (t,t+1)} v_n \,dx +   
\int_{\{v \ge c^{(q-1)r}\} \cap (t,t+1)}v \,dx \Bigr)\\ 
&= 2^{r-1} \Bigl( \int_{(2c)^{(q-1)r}}^\infty
|\{v_n \ge s\} \cap (t,t+1)|\,ds + (2c)^{(q-1)r} |\{v_n \ge (2c)^{(q-1)r}\} \cap (t,t+1)|\\
&+   \int_{c^{(q-1)r}}^\infty |\{v \ge s\} \cap (t,t+1)|\,ds +c^{(q-1)r} |\{v_n \ge c^{(q-1)r}\} \cap (t,t+1)|\Bigr) \\
&= 2^{r-1} \Bigl( \int_{(2c)^{(q-1)r}}^\infty
|\{|u_n| \le s^{\frac{1}{(q-1)r}}\} \cap (t,t+1)|\,ds + (2c)^{(q-1)r} |\{|u_n| \le 2c\} \cap (t,t+1)|\\
&+   \int_{c^{(q-1)r}}^\infty |\{|u| \le s^{\frac{1}{(q-1)r}}\} \cap (t,t+1)|\,ds +
c^{(q-1)r} |\{|u| \le c\} \cap (t,t+1)|\Bigr)\\
&\le 2^{r} \kappa_{\text{\tiny $K$}} \Bigl( \int_{(2c)^{(q-1)r}}^\infty s^{\frac{1}{(q-1)r}}ds 
  + (2c)^{1+(q-1)r}\Bigr)\\
&= 2^{r} \kappa_{\text{\tiny $K$}} \Bigl( - \frac{(2c)^{(q-1)r+1}}{\frac{1}{(q-1)r}+1}
  + (2c)^{1+(q-1)r}\Bigr)\\
&= 2^{r} \kappa_{\text{\tiny $K$}} \Bigl(\frac{ 2^{1+(q-1)r}}{1+(q-1)r} \Bigr)c^{(q-1)r+1} < \eps \qquad \text{for $n \ge n_0$}
\end{align*}
by (\ref{eq:parini-weth-9}). Combining this with (\ref{eq:11}) yields
$$
\limsup_{n \to \infty} \|\sigma_q(u_n)-\sigma_q(u)\|_{r,0}^r =  \limsup_{n \to \infty} \sup_{t \ge 0} [\sigma_q(u_n)-
\sigma_q(u)]_{t,r}^r \le \eps.
$$ 
Since $\eps>0$ was given arbitrarily, we conclude that 
$$
\|\sigma_q(u_n)-\sigma_q(u)\|_{r,0}^r  \to 0 \qquad \text{as $n \to \infty$.}
$$
Hence Claim 1 follows.\\
Next, we let $u \in \cU$ and $w \in C^1_0(I)$ with
$\|w\|_{L^\infty(I)}<1$. For $\tau \in \R
\setminus \{0\}$ we then have 
$$
\frac{1}{\tau}\Bigl(h_q(u+\tau w) -h_q(u)\Bigr)=I_{{\tau}}+J_{{\tau}}\quad \text{in $L^1_0(I)$}
$$
with
$$
I_{{\tau}}(x)= 1_{\{|u|> |{\tau}|\} }\frac{|u+{\tau}w|^{q} -|u(x)|^{q}}{{\tau}}
,\quad  J_{{\tau}}= 1_{\{|u|\le |{\tau}|\} }\frac{|u+{\tau}w|^{q} -|u|^{q}}{{\tau}}
$$ 
Note that  
$$
I_{{\tau}}(x)= q  \int_0^1 1_{\{|u|> |{\tau}|\} }(x)\sigma_q(u(x)+\rho{\tau}w(x))w(x)
\,d\rho.
$$
Hence 
$$
\bigl[I_{\tau} - q \sigma_q(u) w\bigr](x) = q \int_{0}^1 \Bigl[\sigma_q(u+\rho  {\tau} w)w - \sigma_q(u)w\Bigr](x)\,d\rho - q  \int_0^1 \Bigl[1_{\{|u| \le |{\tau}|\} }\sigma_q(u+\rho  {\tau} w)w\Bigr](x) d\rho
$$
where
$$
\int_{t}^{t+1} \Bigl| \int_{0}^1 \Bigl[\sigma_q(u+\rho  {\tau} w)w - \sigma_q(u)w \Bigr](x)\,d\rho\Bigr|dx \le \|w\|_{L^\infty(I)} \sup_{0 \le \rho \le 1} \| \sigma_q(u+\rho\tau w)-\sigma_q(u)\|_{1,0} \quad \text{for $t \ge 0$}
$$
and, by H\"older's and Jensen's inequality,
\begin{align*}
\int_{t}^{t+1} &\Bigl|\Bigl[ 1_{\{|u| \le |{\tau}|\} } \int_0^1 \sigma_q(u+\rho  {\tau} w)wd\rho \Bigr](x)\Bigr| dx \\
&\le |\{|u| \le {\tau}\} \cap (t,t+1)|^{1/r'}\|w\|_{L^\infty(I)} 
 \Bigl(\int_0^1 \int_{t}^{t+1} |\sigma_q(u+\rho\tau w)|^r  
 dx d\rho\Bigr)^{1/r}\\
&\le |\{|u| \le {\tau}\}|^{1/r'}\|w\|_{L^\infty(I)}  
\sup_{0 \le \rho \le 1} \| \sigma_q(u+\rho\tau w)\|_{r,0} \qquad \text{for $t \ge 0$.}
\end{align*}
Combining these two estimates with Claim 1 and (\ref{parini-w-est-1}), we deduce that 
\begin{equation}
\label{I-tau-est}
\|I_{\tau} - q \sigma_q(u) w\|_{1,0} \to 0 \qquad \text{as $\tau \to 0$.}
\end{equation}
Next we estimate  
\begin{align*}
\int_{t}^{t+1} |J_{{\tau}}| dx &\le \frac{1}{|{\tau}|} \int_{t}^{t+1} 1_{\{|u|\le |{\tau}|\}} \Bigl(|u+{\tau}w|^{q}
+|u|^{q}\Bigr)\,dx\\
&=|{\tau}|^{q-1} \int_{t}^{t+1} 1_{\{|u|\le |{\tau}|\}}
\Bigl|\frac{u}{{\tau}}+w\Bigr|^{q} +\Bigl|\frac{u}{{\tau}}\Bigr|^{q}\,dx\\
&\le |{\tau}|^{q-1}(2^{q}+1)|\{u| \le |{\tau}|\}\cap(t,t+1)| \le {\kappa_{\text{\tiny $K$}}} |{\tau}|^{q}(2^{q}+1)\qquad \text{for $t \ge 0$}
\end{align*}
and therefore 
\begin{equation}
\label{J-tau-est}
\|J_{\tau}\|_{1,0} \to 0 \qquad \text{as $\tau \to 0$.}
\end{equation}
Combining (\ref{I-tau-est}) and (\ref{J-tau-est}), we deduce the existence of 
$$
h_q'(u)w = 
\lim_{{\tau} \to \infty}\frac{1}{{\tau}}\Bigl(h_q(u+{\tau}w) -h_q(u)\Bigr)=
\sigma_q(u)w \qquad \text{in $L^1_0(I)$.}
$$
Together with Claim 1, this yields that $h_q$ is of class $C^1$, as claimed.
\end{proof}

We may now complete the 
\begin{proof}[Proof of Proposition~\ref{G-differentiable-p-2-3-restated}]
The $C^1$-regularity of $G$ follows easily once we have seen that the map 
$$
H: \left(-\eps_0, \gamma_\sdiamond \right) \times W^{2}_\delta(I) \to L^1_\delta(I), \qquad  
(\gamma, \Psi) \mapsto  e^{(\gamma-1)t}|U_{\gamma}|^{p-2}\Psi 
$$
is of class $C^1$. Note that we can write $H= H_3 \circ H_2 \circ H_1$ with 
\begin{align*}
H_1&: \left(-\eps_0, \gamma_\sdiamond \right) \times W^{2}_\delta(I) \to 
\left(-\eps_0, \gamma_\sdiamond \right) \times L^\infty(I) \times  C^1_0(I), \qquad  (\gamma, \Psi) \mapsto  (\gamma,\Psi,U_{\gamma})\\ 
H_2&: \left(-\eps_0, \gamma_\sdiamond \right) \times L^\infty(I) \times  \cU \to 
\left(-\eps_0, \gamma_\sdiamond \right) \times L^\infty(I) \times  L^1_0(I), \qquad  (\gamma, \Psi,v) \mapsto  (\gamma,\Psi,|v|^{p-2})\\ 
H_3&: \left(-\eps_0, \gamma_\sdiamond \right) \times L^\infty(I) \times  L^1_0(I) \to L^1_\delta(I), \qquad (\gamma,\psi,v) \mapsto e^{(\gamma-1)(\cdot)}v \psi  
\end{align*}
The $C^1$-regularity of $H_1$ is a consequence of Proposition~\ref{implicit function for U-gamma}, and the $C^1$-regularity of $H_2$ is a consequence of Lemma~\ref{continuity-2-p-3}. Finally, the $C^1$-regularity of $H_3$ is easy to check since $e^{(\gamma-1)t} \le e^{-\delta t}$ for $\gamma <\gamma_\sdiamond$. Hence we conclude that $H$ is of class $C^1$, and this finishes the proof.  
\end{proof}

\section{Bifurcation of almost radial nodal solutions}
\label{sec:bifurc-almost-radi}

In this section, we prove the bifurcation result stated in Theorem~\ref{thm-bifurcation}.

\begin{proof}[Proof of Theorem~\ref{thm-bifurcation}]
The proof relies on Corollary~\ref{corollary-on-eigenvalue-curves} and a result by Kielh{\"o}fer \cite{kielhoefer:1988}. To adapt our problem to the setting of \cite{kielhoefer:1988}, we consider the Hilbert space $E:=L^2(\B)$, $D:=H^2(\B) \cap H_0^1(\B)$, fix $\alpha:=\alpha_{i,\ell}$ as in the assumption and consider the map
$$
G: (-\alpha,\infty) \times D \to E, \quad [G(\lambda,u)] = -\Delta (u+u_{\alpha + \lambda}) - |x|^{\alpha + \lambda} |u+u_{\alpha + \lambda}|^{p-2} (u+u_{\alpha + \lambda}) .
$$
Then $G$ is continuous with $G(\lambda,0)=0$ for $\lambda>-\alpha$. Moreover, the Fr\'echet derivative $A(\lambda):=G_u(\lambda,0)$, given by
$$
A(\lambda) \phi = -\Delta \phi - (p-1) |x|^{\alpha + \lambda} |u_{\alpha + \lambda}|^{p-2} \phi ,
$$
exists for $\lambda>-\alpha$ and coincides with the linearized operator $L^{\alpha + \lambda}$ from \eqref{linearized operator}.
Hence it is a Fredholm operator of index zero having an isolated eigenvalue 0.

Furthermore, there is a differentiable potential $g:\R \times D \to \R$ such that $g_u(\lambda,u)h=(G(\lambda,u),h)_{L^2}$ for all $h \in D$ in a neighborhood of $(0,0)$, given by 
$$
g(\lambda,u)=\int_\B \Bigl( \frac{1}{2}|\nabla (u+ u_{\alpha + \lambda})|^2 - \frac{|x|^{\alpha + \lambda}}{p}|u+u_{\alpha + \lambda}|^{p}\Bigr) \, dx .
$$
To apply the main theorem in \cite{kielhoefer:1988}, we need to ensure that the crossing number of the operator family $A({\lambda})$ through $\lambda =0$ is nonzero. This is a consequence of Corollary \ref{corollary-on-eigenvalue-curves}(iii), which implies that the number of negative eigenvalues of the linearized operator $L^{\alpha + \eps}=A(\eps)$ is strictly larger than that of $L^{\alpha - \eps}=A(-\eps)$ for small $\eps>0$. 
 
Therefore, \cite[Theorem, p.4]{kielhoefer:1988} implies that $(0,0)$ is a bifurcation point for the equation $G(\lambda,u)=0$, $(\lambda,u) \in \R \times D$, i.e. there exists a sequence $\left( (\lambda_n,v_n) \right)_n \subset \R \times D \setminus\{0\}$ such that 
\begin{align*}
G(\lambda_n,v_n)=0 \quad \text{for all } n, \qquad 
(\lambda_n,v_n) \to (0,0) \quad \text{in $\R \times D$ as } n \to \infty .
\end{align*}
Setting $\alpha_n:= \alpha + \lambda_n$, $u^n:=v_n + u_{\alpha_n}$ we conclude
$$
-\Delta u^n - |x|^{\alpha_n}|u^n|^{p-2} u^n 
=G(\lambda_n,v_n)=0 ,
$$
i.e. $u^n$ is a solution of \eqref{1.4}. Moreover, $u^n \to u_\alpha$ in $D$. We may therefore deduce by elliptic regularity -- using the fact that the RHS of (\ref{1.4}) is H\"older continuous in $x$ and $u$ -- that the sequence $(u^n)_n$ is bounded in $C^{2,\rho}(\ov\B)$ for some $\rho>0$, and from this we deduce that $u^n \to u_\alpha \in C^2(\ov\B)$. Since $u_{\alpha}$ is radially symmetric with precisely $K$ nodal domains, there exist $r_0 := 0<r_1 < \cdots < r_K := 1$ such that, for $i=1, \ldots, K$,  
$$
u_\alpha(x)  =0, \;(-1)^i \del_r u^n (x) >0  \quad \text{for } |x|=r_i \qquad \text{and}\qquad 
(-1)^{i-1} u_\alpha(x)  >0 \; \text{for } r_{i-1} < |x| < r_i, 
$$
where $\del_r$ denotes the derivative in the radial direction. Consequently, there exist $\eps,\delta>0$ such that, after passing to a subsequence, 
$$
(-1)^{i+1} u^n(x)  > \eps \quad \text{for $r_{i-1} + \delta < |x| < r_i - \delta$, $n \in \N$}
$$
and 
$$
(-1)^i \del_r u^n (x) >0 \quad \text{for $r_{i} - \delta < |x| < r_i + \delta$, $n \in \N$.}
$$ 
We conclude that for $i=1,\dots,K-1$ and each direction $w \in \mathbb {S}^{N-1}$ the function 
$$
(r_i - \delta, r_i + \delta) \to \R, \quad t \mapsto u^n(tw)
$$
has precisely one zero, which we denote by $r_{i,n}(w)$. In particular, the nodal domains of $u^n$ are given by 
$$ 
\Omega_1 :=\left\{ x \in \B : |x|< r_{1,n}\left(\frac{x}{|x|} \right) \right\} \quad \text{and}\quad 
\Omega_i := \left\{ x \in \B: r_{i-1,n}\left(\frac{x}{|x|}\right) <|x|< r_{i,n}\left(\frac{x}{|x|}\right) \right\} 
$$
for $i=2, \ldots K$. Consequently, $0 \in \Omega_1$, $\Omega_1$ is homeomorphic to a ball, and $\Omega_2, \ldots, \Omega_K$ are homeomorphic to annuli. Finally, we note that $u^n=v_n + u_{\alpha_n}$ is nonradial, since $v_n \not \equiv 0$ and 
$u_{\alpha_n}$ is the \emph{unique} radial solution of \eqref{1.4} with $\alpha= \alpha_n$ and with $K$ nodal domains.
\end{proof}

\end{document}